\newtheorem{thm}{Theorem}
\newtheorem{lem}[thm]{Lemma}
\theoremstyle{definition}
\newtheorem{problem}[thm]{Problem}
\newtheorem{rem}[thm]{Remark}
\newcommand{\AREA}{\operatorname{area}}
\newcommand{\CONV}{\operatorname{conv}}
\newcommand{\INT}{\operatorname{int}}
\newcommand{\BD}{\operatorname{bd}}
\newcommand{\AFF}{\operatorname{Aff}}
\begin{document}

\title{Self-affinity of discs under glass-cut dissections}
\author{Christian Richter}
\address{Institute for Mathematics, Friedrich Schiller University, 07737 Jena, Germany}
\date{\today}

\begin{abstract}
A topological disc is called $n$-self-affine if it has a dissection into $n$ affine images of itself. It is called $n$-gc-self-affine if the dissection is obtained by successive glass-cuts, which are cuts along segments splitting one disc into two. For every $n \ge 2$, we characterize all $n$-gc-self-affine discs. All such discs turn out to be either triangles or convex quadrangles. All triangles and trapezoids are $n$-gc-self-affine for every $n$. Non-trapezoidal quadrangles are not $n$-gc-self-affine for even $n$. They are $n$-gc-self-affine for every odd $n \ge 7$, and they are $n$-gc-self-affine for $n=5$ if they aren't affine kites. Only four one-parameter families of quadrangles turn out to be $3$-gc-self-affine.

In addition, we show that every convex quadrangle is $n$-self-affine for all $n \ge 5$.
\end{abstract}

\subjclass[2010]{52C20 (primary); 05B45; 51N10; 52B45 (secondary).}
\keywords{Topological disc; self-affine; glass-cut dissection; guillotine cut}

\maketitle


\section{Background and main results}

A \emph{topological disc} $D$ in the Euclidean plane is an image of a closed circular disc under an affine transformation of the plane. The disc $D$ is called \emph{$n$-self-affine} if it has a dissection into discs $D_1,\ldots,D_n$, called \emph{pieces}, that are all affine images of $D$. Here we speak of a \emph{dissection} if $D$ is the union of all pieces and any two pieces have disjoint interiors. We call $D$ \emph{self-affine} if it is $n$-self-affine for some integer $n \ge 2$. The concept of self-affinity generalizes self-similarity and is prominent in fractal geometry, where the affine transformations are supposed to be contractions \cite[Section 9.4]{falconer1990}. But it is also fruitful in the elementary geometry of polygons, see e.g. \cite{blechschmidt_richter2015,hertel2000,hertel_richter2010,richter2012}. Figure~\ref{fig:1} presents some examples. The non-convex ones are adopted from \cite{golomb1964}. 
Self-affine convex polygons have at most five vertices as follows from \cite[Theorem~5]{bernheim_motzkin1949}, see also \cite[Satz~1]{hertel2000}. Self-affinity of triangles is trivial. All convex quadrangles are $5$-self-affine. This goes back to Attila P\'or, see \cite[Proposition~1]{hertel_richter2010}. There exist self-affine convex pentagons \cite[Proposition~4]{hertel_richter2010}, but the regular pentagon and, more generally, pentagons whose inner angles are close to $\frac{3\pi}{5}=108^\circ$ are not self-affine \cite[Proposition~3]{hertel_richter2010}, \cite[Theorem~1]{blechschmidt_richter2015}.

\begin{figure}
\begin{center}
\begin{tikzpicture}[xscale=.067, yscale=.058]

\draw
  (-41.0,0)--(-66.0,0)--(-53.8,25.6)--cycle
	(-41,0)--(-53.8,8.5)--(-66,0)
	(-53.8,8.5)--(-53.8,25.6)
	;

\draw
  (-10.0,0)--(-31.0,0)--(-31.0,24.0)--(-20.5,24.0)--(-20.5,12.0)--(-10.0,12.0)--cycle
	(-15.25,12.0)--(-15.25,6.0)--(-25.75,6.0)--(-25.75,18.0)--(-20.5,18.0)
	(-20.5,0)--(-20.5,6.0)
	(-31.0,12.0)--(-25.75,12.0)
	;

\draw
  (12.8,0)--(25.6,0)--(19.2,12.8)--(12.8,12.8)--(11.2,9.6)--(12.0,8.0)--(12.8,8.0)--(13.0,8.4)--(12.9,8.6)
  (6.4, 12.8) -- (0, 0) -- (12.8, 0) -- (16.0, 6.4) -- (14.4, 9.6) -- (12.8, 9.6) -- (12.4, 8.8) -- (12.6, 8.4) -- (12.8, 8.4)
  (19.2, 12.8) -- (12.8, 25.6) -- (6.4, 12.8) -- (9.6, 6.4) -- (12.8, 6.4) -- (13.6, 8.0) -- (13.2, 8.8) -- (12.8, 8.8) -- (12.7, 8.6)
  (25.6, 0) -- (32.0, 12.8) -- (28.8, 19.2) -- (25.6, 19.2) -- (24.8, 17.6) -- (25.2, 16.8) -- (25.6, 16.8) -- (25.7, 17.0)
  (19.2, 12.8) -- (25.6, 12.8) -- (27.2, 16.0) -- (26.4, 17.6) -- (25.6, 17.6) -- (25.4, 17.2) -- (25.5, 17.0) 
  ;
	
\draw
  (40,0)--(65,0)--(52.5,25.6)-- cycle
	(52.5,25.6)--(52.5,0)
  (40,0)--(52.5,8.5)--(65,0)
	;
	
\draw
  (75,0)--(105,0)--(95,25.6)--(75,25.6)--cycle
	(81,0)--(79,25.6)
	(87,0)--(83,25.6)
	(90,12.8) node {$\cdots$}
	(99,0)--(91,25.6)
	;
	
\end{tikzpicture}
\end{center}

\caption{Realizations of self-affinities and gc-self-affinities \label{fig:1}}
\end{figure}
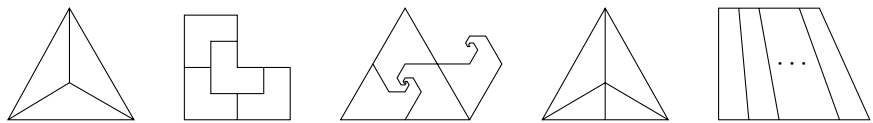

In the present paper our main focus is on so-called glass-cut dissections. A \emph{glass-cut} (also guillotine cut) dissects a disc $D$ along a line segment into two discs. That process can be repeated finitely many times with some of the resulting pieces. The outcome is called a \emph{glass-cut dissection} (or \emph{gc-dissection} for short) of $D$. Accordingly, we obtain the concepts of \emph{$n$-gc-self-affine} and \emph{gc-self-affine} discs. Glass-cut dissections are less flexible than general ones, but are more accessible to algorithmic approaches, see e.g. \cite{czyzowicz_et_al2007, kranakis_et_al2000, martini_soltan1998, puchinger_raidl2007, seiden_woeginger2005} for applications. Only the last two dissections in Figure~\ref{fig:1} are based on glass-cuts.

Our main result is the following characterization of all $n$-gc-self-affine discs for every $n=2,3,\ldots$

\begin{thm}\label{thm:main}
\begin{enumerate}
\item[(i)] 
Every gc-self-affine topological disc is a triangle or a convex quadrangle.
\item[(ii)] 
Every triangle is $n$-gc-self-affine for all $n=2,3,\ldots$
\item[(iii)] 
Let $n \in \mathbb{Z}$, $n \ge 2$, be even. Then a convex quadrangle is $n$-gc-self-affine if and only if it is a trapezoid.
\item[(iv)] 
Let $n \in \mathbb{Z}$, $n \ge 7$, be odd. Then every convex quadrangle is $n$-gc-self-affine.
\item[(v)] 
A convex quadrangle is not $5$-gc-self-affine if and only if it is an affine image of a kite, but no parallelogram.
\item[(vi)] 
The $3$-gc-self-affine convex quadrangles are given by four one-parameter families, see Theorem~\ref{thm:n=3} for details.
\end{enumerate}
\end{thm}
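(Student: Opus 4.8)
The plan is to prove the full characterization in Theorem~\ref{thm:main} by treating the six parts in a natural logical order, where the early parts set up the structural restrictions that make the later parts manageable.

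The plan is to prove the six parts in the order (i), then the ``if'' constructions, then the obstructions, then the remaining constructions, and finally the delicate classification (vi), using throughout the fact that a gc-dissection is encoded by a binary tree whose root is the \emph{first glass-cut}. The organizing principle is this: if $D$ is $n$-gc-self-affine, the first cut splits $D$ along a chord into two discs $D'$ and $D''$, each of which is tiled by glass-cuts into affine copies of $D$; this recursion, together with affine invariants of the copies, is the engine for every part.

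For part (i) I would first rule out curved boundary. Since every cut is a segment, all interior edges of all pieces are straight, so every curved subarc of the boundary of any copy $f_i(D)$ must lie on $\BD D$; tracking the (affinely invariant) count of maximal curved arcs through the binary tree, I expect a contradiction by a counting argument unless $\BD D$ has no curved arc, forcing $D$ to be a convex polygon. (Convexity itself follows because a glass-cut of a convex disc yields convex discs, and non-convexity of $D$ would have to be reproduced on the boundary of interior pieces, which is impossible.) By the cited Bernheim--Motzkin bound, a self-affine convex polygon has at most five vertices, so it remains to exclude pentagons. Here I would use the first-cut analysis: a chord of a pentagon cannot split it into two regions both tileable by affine copies of a fixed pentagon, because the admissible cut types (vertex-to-edge, edge-to-edge) produce pieces whose vertex counts and angle data cannot recursively match those of the pentagon; an induction on the number of cuts closes this. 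This leaves triangles and convex quadrangles, and \textbf{this exclusion of curved boundaries and of pentagons is, together with the parity obstruction below, the main obstacle}, since the general bound of five must be sharpened to four specifically for glass-cuts.

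The ``if'' directions are then constructive. For (ii), every triangle is $n$-gc-self-affine because a cevian (a glass-cut from a vertex to the opposite edge) splits a triangle into two triangles, and every triangle is an affine image of $D$; iterating $n-1$ cevian cuts yields $n$ pieces for every $n\ge 2$. For the trapezoid half of (iii) and the positive cases of (iv)--(v), I would observe that two trapezoids are affinely equivalent if and only if they share the same ratio of parallel sides. Hence cutting a trapezoid by $n-1$ glass-cuts running from the long parallel side to the short one, placed so that each strip again has the same ratio of parallel sides (for instance by equal subdivision of both parallel sides), dissects it into $n$ affine copies of itself for every $n\ge 2$. This settles that trapezoids are $n$-gc-self-affine for all $n$.

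It remains to treat non-trapezoidal quadrangles, where the first-cut recursion meets affine invariants. For the even-$n$ obstruction in (iii) and the kite obstruction in (v), the plan is to attach to each convex quadrangle an affine invariant (I would start from the ratios $\lambda,\mu$ in which the diagonals cut one another, for which $\lambda=\mu$ characterizes trapezoids) and to determine exactly how this datum transforms across a single edge-to-edge glass-cut; the aim is a $\mathbb{Z}/2$-valued consequence forcing the number of copies to be odd whenever the quadrangle is not a trapezoid, and a sharper version excluding $n=5$ for affine kites that are not parallelograms. For the positive cases (iv) and the non-kite part of (v), I would exhibit explicit base dissections for small odd $n$ (in particular $n=5$ for non-kites and $n=7$) and then an inductive step that increases the piece count by two, realized by locally re-dissecting a subregion composed of two adjacent copies of $Q$ into four copies; the construction must be engineered so as \emph{not} to rely on splitting a single copy into three, since by (vi) that is almost never possible. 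Finally, part (vi) follows from the first-cut recursion specialized to three pieces: the binary tree has a single leaf cut off by the first glass-cut and a second cut splitting the remainder, and checking when all three pieces can be affine copies of $Q$ reduces to a finite case analysis whose affine-equivalence equations carve out exactly the four one-parameter families recorded in Theorem~\ref{thm:n=3}.
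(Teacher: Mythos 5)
The decisive gap is convexity in part (i), which you dispose of in a single parenthetical: ``non-convexity of $D$ would have to be reproduced on the boundary of interior pieces, which is impossible.'' This is precisely the naive argument that fails. A piece of a gc-dissection need not lie in $\INT(D)$: pieces and all their iterates can cling to $\BD(D)$, and the paper explicitly observes that some gc-self-affinities produce no limit set contained in the interior, which is why its Lemma~\ref{lem:limit_sets}(iii) (a piece inside $\INT(D)$ forces convexity) is declared insufficient. The paper's actual proof of convexity occupies all of Section~\ref{sec:convex}: iterating the dissection yields \emph{limit segments}; an uncountability and non-crossing argument (Lemma~\ref{lem:segments1}) produces a limit segment meeting $\BD(D)$ only in its endpoints; one then normalizes by equiaffine maps stretching along that segment and uses compactness in $\AFF_2^1$ (Lemmas~\ref{lem:segments2}--\ref{lem:convex}) to exhibit $D$ as a Hausdorff limit of sets that are convex in ever-larger vertical strips. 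None of this is replaceable by your sketch. Your route for ruling out curved boundary is also not carried out (``I expect a contradiction'') and is ordered backwards relative to the available tool: the paper first proves convexity and then cites \cite{richter2012}, which applies to \emph{convex} discs. Finally, the pentagon exclusion needs neither angle data nor Bernheim--Motzkin: a glass-cut of a convex $k_0$-gon gives pieces with $k_1+k_2\le k_0+4$, so $\min\{k_1,k_2\}<k_0$ whenever $k_0\ge 5$, and iterating leaves a tile with fewer than $k_0$ vertices, contradicting that every tile is an affine copy.

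For the quadrangle parts your strategy --- an affine invariant with a transformation law under edge-to-edge cuts, a parity consequence for even $n$, explicit small-$n$ dissections with a $+2$ induction, and a finite tree analysis for $n=3$ --- is exactly the paper's architecture, but every hard step is left unexecuted. You never prove that all intermediate pieces of the dissection tree are quadrangles with each cut joining relative interior points of \emph{opposite} sides (Lemma~\ref{lem:tree}, without which no invariant can be propagated); your candidate invariant (diagonal section ratios) is not shown to transform multiplicatively, whereas the paper's affine quotient $\frac{\alpha}{\beta}$ of the parametrization $Q(\alpha,\beta)$ does so via the glueing table of Lemma~\ref{lem:glueing}, yielding the parity statement (Lemma~\ref{lem:parity}) you only posit. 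Above all, the two computational cores are merely named: the impossibility of $5$-gc-self-affinity for affine kites is a five-case analysis of extended dissection trees with flip markings and polynomial identities in $\alpha$ (Lemma~\ref{lem:kites5}), and the $n=3$ classification is a nine-case tree analysis whose equations carve out the four families (Theorem~\ref{thm:n=3}); note also that the paper's positive results for odd $n$ come from explicit algebraic identities such as $Q(\alpha,\beta)=(Q(\alpha,\beta)\cdot T(\alpha\beta))^F\cdot T(\gamma)$ rather than a local two-copies-into-four re-dissection, which you do not construct. As it stands, the proposal is a reasonable road map of the paper's route with its two hardest legs missing and one explicitly incorrect shortcut on convexity.
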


Theorem~\ref{thm:main} shows in particular that gc-self-affinity of every non-trapezoidal quadrangle $Q$ is non-trivial in so far as there is no number $n_0$ such that $Q$ is $n$-gc-self-affine for all $n \ge n_0$. Let us point out that the situation is different for self-affinity based on general dissections.

\begin{thm}\label{thm:self-affine}
Every convex quadrangle is $n$-self-affine for every $n \in \mathbb{Z}$, $n \ge 5$.
\end{thm}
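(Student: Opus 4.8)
The plan is to prove Theorem~\ref{thm:self-affine} by leveraging the already-established fact that every convex quadrangle is $5$-self-affine (due to Attila Pór, \cite[Proposition~1]{hertel_richter2010}), and then bootstrapping from $n=5$ to all larger $n$ via a subdivision argument. The key observation will be that self-affinity, unlike gc-self-affinity, is flexible enough to allow us to increase the piece count by a controlled amount. Specifically, I would first establish a lemma that if a convex quadrangle $Q$ is $n$-self-affine for some $n$, then $Q$ is also $(n+k)$-self-affine for a suitable increment $k$; iterating and combining increments that are coprime (or that cover all residues beyond a threshold) then yields every $n \ge 5$.

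First I would look for the cheapest way to add a fixed small number of pieces. The natural move is: take any piece $D_i$ in a given dissection of $Q$ into affine copies $D_1,\dots,D_n$, and replace $D_i$ (itself an affine copy of $Q$) by a finer self-affine dissection of itself. If $Q$ admits a dissection into $m$ copies, then subdividing one piece replaces one copy by $m$ copies, changing the total from $n$ to $n+m-1$. Thus from a single $5$-dissection I immediately get that $n$-self-affinity implies $(n+4)$-self-affinity. This gives all $n \equiv 5 \pmod 4$, i.e. $n = 5, 9, 13, \dots$, but I need to fill in the residues $6, 7, 8 \pmod{}$ as well. Hence the real content is to produce dissections into $6$, $7$, and $8$ pieces directly, after which the $+4$ subdivision step completes every residue class and yields all $n \ge 5$. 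The cleanest formulation is: prove $Q$ is $n$-self-affine for $n \in \{5,6,7,8\}$, and then observe that adding $4$ repeatedly covers $\{5,6,7,\dots\}$ exhaustively, since every integer $n \ge 5$ equals one of $5,6,7,8$ plus a multiple of $4$.

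The main obstacle, and where I would spend the bulk of the effort, is constructing explicit dissections of an arbitrary convex quadrangle into $6$, $7$, and $8$ affine copies of itself. For general (non-trapezoidal, non-kite) quadrangles these cannot be glass-cut dissections by Theorem~\ref{thm:main}, so genuinely non-guillotine configurations are required, which is exactly why the proof for general dissections is so much easier to state yet needs a separate geometric construction. One promising route is to start from Pór's $5$-dissection and perform a local regrouping: replace a cluster of two or three adjacent pieces by a larger affine copy of $Q$ and then re-dissect that larger copy, arranging the counts so that the net piece number lands on $6$, $7$, or $8$. Alternatively, since every convex quadrangle is an affine image of $Q$, I can combine copies of $Q$ at different scales: for instance, tiling a sub-quadrangle of $Q$ by a $2\times 2$ grid of affinely-scaled copies (four pieces) together with a decomposition of the complementary region into further copies. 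Verifying that the complementary region is itself tileable by affine copies of $Q$ is the delicate step and will require a case analysis, or an affine normalization that reduces the general quadrangle to a convenient canonical form (placing three vertices at fixed coordinates and letting the fourth vary over a parameter region).

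To keep the argument uniform I would exploit affine invariance from the outset: $n$-self-affinity is an affine-invariant property, so it suffices to treat one representative from each affine equivalence class of convex quadrangles. Normalizing three vertices to, say, $(0,0)$, $(1,0)$, $(0,1)$ leaves a single two-parameter family for the fourth vertex, and within this normalized picture the required $6$-, $7$-, and $8$-piece dissections can be exhibited by explicit coordinates with the affine maps written down directly. The verification that these maps are affinities carrying $Q$ onto the named pieces, and that the pieces have disjoint interiors and cover $Q$, is then a routine (if tedious) check. Once $n \in \{5,6,7,8\}$ is secured, the inductive $+4$ subdivision step is immediate and completes the proof for all $n \ge 5$.
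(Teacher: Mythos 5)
Your bootstrap step is correct and worth keeping: if $Q$ is $n$-self-affine, replacing one piece by P\'or's $5$-dissection of that piece (an affine copy of $Q$) yields $(n+4)$-self-affinity, so it suffices to handle $n\in\{5,6,7,8\}$. Moreover, $n=5$ is P\'or's result, and $n=7$ comes for free from Theorem~\ref{thm:main}(iv) --- here your proposal contains a factual slip: you assert that the $6$-, $7$- and $8$-piece dissections ``cannot be glass-cut dissections by Theorem~\ref{thm:main}'', but that theorem excludes glass-cuts only for \emph{even} $n$ on non-trapezoids; for odd $n\ge 7$ every convex quadrangle is $n$-gc-self-affine, so $n=7$ needs no new construction at all.

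The genuine gap is the even cases $n=6$ and $n=8$, which your proposal reduces everything to but never actually solves: the passages beginning ``One promising route\ldots'' and ``Alternatively\ldots'' are speculative strategies, not constructions, and the ``routine (if tedious) check'' is deferred precisely at the point where all the mathematical content lies. Note also that the gap cannot be closed by further bootstrapping: every substitution step changes the piece count by $m-1$ where $m$ is the size of some known self-affine dissection, and since all dissections available to you ($m=5$ from P\'or, odd $m\ge 7$ from Theorem~\ref{thm:main}(iv)) have odd $m$, all available increments are even, so starting from odd base counts you can only ever reach odd $n$. At least one even-count dissection of a general $Q(\alpha,\beta)$ must be built from scratch, and this is exactly what the paper's proof supplies: it dissects a quadrangle of type $Q(\alpha,\beta)$ into \emph{two} quadrangles of the same type plus two trapezoids of type $T(\alpha\beta)$, using the affine map $\varphi$ fixing $b,d$ with $\varphi(a)=c$, a homothety $\sigma$, and a calibrated homothety $\tau_{\nu_0}$ chosen by a continuity argument so that the trapezoids have parameter exactly $\alpha\beta$; since $T(\alpha\beta)$ splits into $2$ or into any even number $\ge 4$ of copies of $Q(\alpha,\beta)$ (Lemma~\ref{lem:trapezoids}), this gives $2+2+(n-4)=n$ pieces for every even $n\ge 6$ in one stroke. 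Without this (or some equivalent) even-count construction, your argument establishes only the odd values of $n$, so as it stands the proof is incomplete.
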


Since every dissection of a convex quadrangle into two quadrangles is done by a glass-cut, the only $2$-self-affine convex quadrangles are trapezoids by Theorem~\ref{thm:main} (see also \cite[Satz 3]{hertel2000}). In view of Theorem~\ref{thm:self-affine}, the following question arises.

\begin{problem}
What convex quadrangles are $n$-self-affine for $n=3,4$ ?
\end{problem}

First systematic considerations of that problem can be found in \cite{hertel2000, zimmermann2023}.

The remainder of the present paper is organized as follows. In Section~\ref{sec:convex} we show that qc-self-affine topological discs are necessarily convex. The short Section~\ref{sec:triangles_quadrangles} proves Theorem~\ref{thm:main}(i). Theorem~\ref{thm:main}(ii) is trivial.
In Section~\ref{sec:parametrization} we start the discussion of quadrangles by introducing a appropriate parametrization of affine types of quadrangles. In Section~\ref{sec:quadrangles} we analyse the parameters of a quadrangle that is composed by glueing together two given quadrangles along a common side. Section~\ref{sec:trapezoids} is devoted to the proof of Theorem~\ref{thm:main}(iii), Section~\ref{sec:n>=5} concerns Theorem~\ref{thm:main}(iv) and (v), and Section~\ref{sec:n=3} gives Theorem~\ref{thm:main}(vi). Finally, Section~\ref{sec:self-affine} proves Theorem~\ref{thm:self-affine}.

We use the following notations. Open and closed intervals in $\mathbb R$ are denoted by $(\xi,\eta)$ and $[\xi,\eta]$, respectively. The line segment in $\mathbb R^2$ with endpoints $x$ and $y$ is denoted by $xy$, its length by $|xy|$. The straight line through $x$ and $y$ is $l(xy)$. We write $B(x_0,r)$ for the closed circular disc (or ball) $\{x \in \mathbb R^2: |xx_0| \le r\}$ of radius $r>0$ centred at $x_0 \in \mathbb R^2$. Interior, boundary, convex hull and area of a plane set $X$ are $\INT(X)$, $\BD(X)$, $\CONV(X)$ and $\AREA(X)$, respectively.


\section{All gc-self-affine discs are convex\label{sec:convex}}

Let the disc $D$ be $n$-gc-self-affine based on a dissection into affine images $\varphi_1(D)$, $\ldots$, $\varphi_n(D)$. For every integer $k \ge 1$, the affine images $\varphi_{i_1} \circ \ldots \circ \varphi_{i_k}(D)$, $i_1,\ldots,i_k \in \{1,\ldots,n\}$, form an (iterated) $n^k$-gc-self-affinity of $D$. Every sequence $(i_k)_{k=1}^\infty \subseteq \{1,\ldots,n\}$ gives rise to a decreasing sequence $(\varphi_{i_1} \circ \ldots \circ \varphi_{i_k}(D))_{k=1}^n$ of compact sets, whose \emph{limit set} in the Hausdorff metric is $S=\bigcap_{k=1}^\infty \varphi_{i_1} \circ \ldots \circ \varphi_{i_k}(D) \subseteq D$ (see \cite[Lemma 1.8.2]{schneider2014}). Since the determinants of (the linear maps associated to) $\varphi_{i_1} \circ \ldots \circ \varphi_{i_k}$, $k=1,2,\ldots$, tend to zero, $S$ is either a singleton or a non-degenerate line segment. In the latter case we speak of a \emph{limit segment}. 

\begin{lem}\label{lem:limit_sets}
The following are satisfied for every qc-self-affine disc $D$.
\begin{enumerate}
\item[(i)] 
Every $x \in D$ belongs to some limit set.
\item[(ii)] 
Two limit segments $S_1$ and $S_2$ do not cross in the sense that $S_1 \cap S_2$ is a singleton in the relative interiors of both $S_1$ and $S_2$.
\item[(iii)] 
If some limit set $S$ satisfies $S \subseteq \INT(D)$, then $D$ is convex.
\end{enumerate}
\end{lem}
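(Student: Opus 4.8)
The plan is to treat the three parts in order, exploiting that the level-$(k{+}1)$ pieces refine the level-$k$ pieces: the $n$ sets $\varphi_{i_1}\circ\cdots\circ\varphi_{i_k}\circ\varphi_j(D)$, $j=1,\ldots,n$, form a dissection of $\varphi_{i_1}\circ\cdots\circ\varphi_{i_k}(D)$. For (i), fix $x\in D$ and call a word $(i_1,\ldots,i_k)$ \emph{admissible} if $x\in\varphi_{i_1}\circ\cdots\circ\varphi_{i_k}(D)$. The empty word is admissible since $x\in D$, and by the refinement property every admissible word of length $k$ has an admissible extension of length $k{+}1$ (the point must lie in at least one child piece). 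The admissible words thus form an infinite, finitely branching tree, and König's lemma produces an infinite branch $(i_k)_{k=1}^\infty$ with $x\in\varphi_{i_1}\circ\cdots\circ\varphi_{i_k}(D)$ for all $k$; hence $x$ lies in the associated limit set. This part is routine.

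For (ii), let $S_1,S_2$ be limit segments of sequences $(i_k),(j_k)$ and let $m$ be the first index with $i_m\ne j_m$. Applying the common map $\varphi_{i_1}\circ\cdots\circ\varphi_{i_{m-1}}$ to the two distinct dissection pieces $\varphi_{i_m}(D),\varphi_{j_m}(D)$, which have disjoint interiors, shows that the pieces $P^1_m:=\varphi_{i_1}\circ\cdots\circ\varphi_{i_m}(D)\supseteq S_1$ and $P^2_m:=\varphi_{j_1}\circ\cdots\circ\varphi_{j_m}(D)\supseteq S_2$, and hence all their successors $P^1_k\supseteq S_1$, $P^2_k\supseteq S_2$ for $k\ge m$, have disjoint interiors. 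Suppose for contradiction that $S_1\cap S_2=\{p\}$ with $p$ in the relative interior of both. Placing $p$ at the origin with $S_1,S_2$ along distinct lines, I would use that each $P^1_k$ is a fat topological disc (the closure of its interior) which Hausdorff-converges to $S_1$ and contains the whole segment $S_1$ through $p$, and likewise $P^2_k$ and $S_2$; the transversality at $p$ then forces $\INT(P^1_k)\cap\INT(P^2_k)\ne\emptyset$ for large $k$, contradicting disjoint interiors. The technical point to be checked is the planar fact that two shrinking fat discs collapsing onto transversally crossing segments must eventually share interior points near the crossing, which I would establish by a crossing-strings argument on a small circle about $p$.

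For (iii), the idea is to pass to the convex hull $C:=\CONV(D)\supseteq D$ and prove $D=C$. Since $\varphi_i(D)\subseteq D\subseteq C$, convexity gives $\varphi_i(C)=\CONV(\varphi_i(D))\subseteq C$, so the $\varphi_i$ form an IFS on $C$, and from the tiling of $D$ we get $\sum_i|\det\varphi_i|=1$, whence $\sum_i\AREA(\varphi_i(C))=\AREA(C)$. The strategy is: if the convex pieces $\varphi_i(C)$ tile $C$, then comparing $C=\bigsqcup_i\varphi_i(C)$ with $D=\bigsqcup_i\varphi_i(D)$ shows the open gap $G:=C\setminus D$ is invariant, $G=\bigsqcup_i\varphi_i(G)$ up to boundary. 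Meanwhile, because $S\subseteq\INT(D)$ and $\CONV(P_k)=\varphi_{i_1}\circ\cdots\circ\varphi_{i_k}(C)\to\CONV(S)=S$, some deep convex piece satisfies $\varphi_{i_1}\circ\cdots\circ\varphi_{i_k}(C)\subseteq\INT(D)$; hence the corresponding copy $\varphi_{i_1}\circ\cdots\circ\varphi_{i_k}(G)$ of the gap lies entirely in $\INT(D)$, disjoint from $G$. If $\AREA(G)>0$ this copy has positive area yet invariance places it inside $G$, a contradiction; so $\AREA(G)=0$, and since $G$ is open this yields $G=\emptyset$, i.e. $D=C$ is convex. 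I expect the main obstacle to be precisely the tiling step for the convex hulls: a priori only the $\varphi_i(D)$ tile $D$, and the hulls $\varphi_i(C)$ may overlap or fail to cover $C$. The area identity $\sum_i\AREA(\varphi_i(C))=\AREA(C)$ makes "covering $C$" equivalent to "disjoint interiors," so the crux is to exclude positive-area overlaps of the convex hulls and thereby make the gap-invariance bookkeeping rigorous.
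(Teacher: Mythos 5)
Your parts (i) and (ii) follow the paper's own route. For (i), your K\"onig's lemma argument is exactly what the paper compresses into ``one easily checks''. For (ii), both you and the paper reduce crossing to the statement that the two pieces containing $S_1$ and $S_2$ would have to share interior points, contradicting the dissection (the paper asserts this at every level $k$ and deduces $(i_k)=(j_k)$; your ``first differing index $m$'' variant is the same argument). The planar fact you defer to a ``crossing-strings argument'' is precisely the step the paper also treats as evident, so on (i) and (ii) you are at or above the published level of detail.

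Part (iii), however, has a genuine gap, and it sits exactly where you locate it: nothing in your setup forces the convex hulls $\varphi_i(C)$, $C=\CONV(D)$, to tile $C$. The area identity $\sum_i\AREA(\varphi_i(C))=\AREA(C)$ only shows that ``the hulls cover $C$'' and ``the hulls have pairwise disjoint interiors'' are equivalent; it supplies no mechanism to exclude the generic picture for non-convex $D$, in which the hull of one piece overlaps neighbouring pieces while a positive-area part of $C$ is left uncovered. Without the tiling, the invariance $G=\bigcup_i\varphi_i(G)$ (up to null sets) is unavailable and the entire gap-transport contradiction collapses; and proving the hull tiling directly looks at least as hard as the lemma itself, since a posteriori it holds only because $D=C$. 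Note also that your argument never uses that the dissection is by glass cuts, whereas that is the decisive ingredient in the paper's much shorter proof: from $S\subseteq\INT(D)$ and Hausdorff convergence (a step you do have, via $\CONV(P_k)\to S$) one gets a piece $D_0=\varphi_{i_1}\circ\cdots\circ\varphi_{i_{k_0}}(D)\subseteq\INT(D)$; since $\BD(D_0)$ avoids $\BD(D)$, it consists entirely of subsegments of straight cuts, and at each vertex the glass-cut structure forces an inner angle smaller than $\pi$, so $D_0$ is a convex polygon; as $D_0$ is an affine image of $D$, the disc $D$ itself is convex. To repair your (iii), replace the convex-hull/IFS bookkeeping by this local polygonality argument for interior pieces of a gc-dissection.
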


\begin{proof}
For (i), one easily checks that there is a sequence $(i_k)_{k=1}^\infty$ such that $x \in \varphi_{i_1} \circ \ldots \circ \varphi_{i_k}(D)$ for $k=1,2,\ldots$ 

For (ii), assume that the Hausdorff limits $S_1=\bigcap_{k=1}^\infty \varphi_{i_1} \circ\ldots\circ\varphi_{i_k}(D)$ and $S_2=\bigcap_{k=1}^\infty \varphi_{j_1} \circ\ldots\circ\varphi_{j_k}(D)$ cross. Hence, for all $k$, the pieces $\varphi_{i_1} \circ\ldots\circ\varphi_{i_k}(D)$ and $\varphi_{j_1} \circ\ldots\circ\varphi_{j_k}(D)$ of the $k$-th iterated dissection share some inner points. Then $(i_k)_{k=1}^\infty=(j_k)_{k=1}^\infty$ and in turn $S_1=S_2$, a contradiction.

For (iii), note that, if $S=\bigcap_{k=1}^\infty \varphi_{i_1} \circ \ldots \circ \varphi_{i_k}(D) \subseteq \INT(D)$, then $D_0=\varphi_{i_1} \circ \ldots \circ \varphi_{i_{k_0}}(D) \subseteq \INT(D)$ for some sufficiently large $k_0$. Since the piece $D_0$ of the $k_0$-th iterated gc-dissection is in $\INT(D)$, its boundary is formed by finitely many line segments and its inner angles are smaller than $\pi$. Hence $D_0=\varphi_{i_1} \circ \ldots \circ \varphi_{i_{k_0}}(D)$ is a convex polygon, and so is $D$. 
\end{proof}

Unfortunately, there are gc-self-affinities of some discs $D$ that do not give rise to limit sets completely contained in the interior of $D$, see e.g. the right-most example in Figure~\ref{fig:1}. Therefore Lemma~\ref{lem:limit_sets}(iii) is not enough to show convexity of every gc-self-affine disc. A deeper analysis is needed.

\begin{lem}\label{lem:segments1}
Let $D$ be a non-convex gc-self-affine topological disc. Then there exists a limit segment $S=ab$ such that $S \cap \BD(D) \subseteq \{a,b\}$.
\end{lem}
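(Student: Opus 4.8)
The plan is to show that a non-convex gc-self-affine disc must contain a limit segment whose interior reaches into $\INT(D)$ but whose only possible contact with $\BD(D)$ is at its two endpoints. Here is how I would proceed.

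\textbf{Setting up the contrapositive via Lemma~\ref{lem:limit_sets}(iii).} Since $D$ is non-convex, Lemma~\ref{lem:limit_sets}(iii) tells us that \emph{no} limit set can be contained in $\INT(D)$. In particular, every limit set meets $\BD(D)$. My first move is to argue that $D$ must possess at least one limit segment (rather than only singleton limit sets). Indeed, by part~(i), every point of $D$ lies in some limit set, and the limit sets are either singletons or segments. Non-convexity means $\BD(D)$ has a ``reflex'' portion; I would exploit that the boundary near a non-convex point cannot be reconstructed solely from singleton limit sets sitting on $\BD(D)$, because the iterated gc-dissection produces pieces that are polygons whose edges accumulate along limit segments. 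So a limit segment $S=ab$ exists. The real work is controlling how $S$ meets $\BD(D)$.

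\textbf{Analyzing the intersection $S \cap \BD(D)$.} Given a limit segment $S=ab$, I want to rule out that its relative interior touches or crosses $\BD(D)$ in any way other than possibly having $a$ or $b$ on $\BD(D)$. The key tool is the crossing condition of Lemma~\ref{lem:limit_sets}(ii): two limit segments meet in at most one point, and that point lies in the relative interior of neither. I would try to show that if the relative interior of some limit segment $S$ met $\BD(D)$ at an interior point $p$, one could use the self-affine structure to produce a \emph{second} limit segment lying along $\BD(D)$ through $p$ that crosses $S$, contradicting (ii). The heuristic is that along $\BD(D)$ the dissection also accumulates, generating boundary-aligned limit segments, and a transversal contact of $S$ with $\BD(D)$ at an interior point of $S$ forces an illegal crossing. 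This reduces the problem to finding, among all limit segments meeting $\BD(D)$, one whose boundary contact is confined to its endpoints.

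\textbf{Extracting the desired segment.} To conclude, I would take the collection of limit segments (nonempty by the first step), and select one, $S=ab$, that is in some sense maximal or extremal—e.g. choose $S$ so that no other limit segment has strictly larger relative interior reaching into $\INT(D)$, or so that its endpoints are the first boundary contacts encountered when extending inward from $\INT(D)$. For this $S$, the crossing argument from the previous paragraph forces $S \cap \BD(D) \subseteq \{a,b\}$: any further intersection point in the relative interior of $S$ would either lie on another limit segment (violating~(ii)) or create a boundary-aligned limit segment crossing $S$.

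\textbf{Main obstacle.} I expect the hardest step to be the second one: rigorously producing a boundary-aligned limit segment at a putative interior contact point $p$, and showing it genuinely \emph{crosses} $S$ in the precise sense of Lemma~\ref{lem:limit_sets}(ii). One must argue that the reflex structure of $\BD(D)$ near $p$, combined with the polygonal shape of deep iterated pieces, forces a transversal limit segment along the boundary. The subtlety is that a limit set along $\BD(D)$ might degenerate to a singleton rather than a genuine segment, so I would need a careful compactness or accumulation argument—tracking how the edges of the pieces $\varphi_{i_1}\circ\cdots\circ\varphi_{i_k}(D)$ converge near $p$—to guarantee a nondegenerate crossing limit segment actually exists and thereby deliver the contradiction.
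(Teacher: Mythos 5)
Your first step is right in substance, though stated more elaborately than necessary: since $D$ is non-convex, Lemma~\ref{lem:limit_sets}(iii) forces every limit set to meet $\BD(D)$, so by part (i) the limit set through any point at distance more than $r$ from $\BD(D)$ must be a segment of length at least $r$ — no appeal to a ``reflex portion'' of the boundary is needed. The genuine gap is in your second and third steps, and the strategy behind them cannot be repaired. You want to show that an interior contact point $p$ of a limit segment $S$ with $\BD(D)$ forces a \emph{boundary-aligned} limit segment through $p$ that crosses $S$. Nothing in the structure of gc-dissections produces such a segment: the limit sets near $p$ may all be singletons, or segments having $p$ as an endpoint, and no compactness argument yields a nondegenerate limit segment inside $\BD(D)$ with $p$ in its relative interior. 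You flag this yourself as the ``main obstacle,'' but it is not a technical subtlety — it is the failure point. Worse, the stronger statement your extremal selection implicitly targets (that a suitably maximal limit segment, or every limit segment, meets $\BD(D)$ only at its endpoints) is false in general: limit segments can lie entirely inside $\BD(D)$. For instance, in a gc-self-affinity of a trapezoid in which one always refines the leftmost piece, the pieces accumulate on a side of $D$, and that whole side is a limit segment. So any argument that tries to rule out interior boundary contact segment-by-segment is doomed; the lemma is existential and must be proved by showing that \emph{enough} segments are good, not that all are.

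The paper supplies exactly the missing idea: a cardinality argument. Fixing $B(x,2r)\subseteq\INT(D)$, every point of $B(x,r)$ lies on a limit segment of length at least $r$ meeting $\BD(D)$, and since countably many lines cannot cover a ball, there are uncountably many such segments on mutually distinct lines. One then shows that for each $\varepsilon>0$ only finitely many of them can contain a boundary point at distance more than $\varepsilon$ from both endpoints: infinitely many such contact points $x_i$ would accumulate at some $x_0\in\BD(D)$; the corresponding segments $S_i$ have endpoints outside $B\left(x_0,\frac{\varepsilon}{2}\right)$ and are pairwise non-crossing there by Lemma~\ref{lem:limit_sets}(ii); and a connected boundary arc $\Gamma\subseteq B\left(x_0,\frac{\varepsilon}{2}\right)$ through $x_0$ would have to join all the $x_i$ without crossing any of the chords $S_i\subseteq D$ — a planar impossibility. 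Uncountable minus countable leaves a segment $S=ab$ with $S\cap\BD(D)\subseteq\{a,b\}$. Note how differently the crossing lemma (ii) enters: the paper uses it only locally, to guarantee that the $S_i$ are disjoint chords near $x_0$, whereas your sketch asks it to manufacture a contradiction at a single contact point, which it cannot do without the boundary-aligned segment you cannot construct.
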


\begin{proof}
We divide the proof into two parts.

\emph{Claim 1. There exist $r > 0$ and uncountably many limit segments, all placed on mutually different straight lines and being of length at least $r$, and each meeting $\BD(D)$.}

We pick some $x \in \INT(D)$ and $r > 0$ such that $B(x,2r) \subseteq \INT(D)$, see the left-hand part of Figure~\ref{fig:segment1}.
\begin{figure}
\begin{center}
\begin{tikzpicture}[scale=.9]

\draw[thick]
  (-1,-.5) arc (-180:0:2 and 1)
	(-1,-.5)--(-1,.5)
	(3,-.5)--(3,1.5)
	(-1,.5) arc (360:180:.5)
	(3.5,1.625) arc (180:360:.25)
	(4,1.625)--(4,2)
	(4,2) arc (0:180:3 and 1.2)
	(-2,.5)--(-2,2)
	;

\draw
	(1,1.2) circle (.55)
	(1,1.2) circle (1.1)
	;

\draw[line width=1.3]
	(-1.5,.375)--(4,1.75)
	(-2,1.2)--(4,2)
	(1,.8)--(2.5,-1.15)
  ;

\draw[line width=4]
  (3,1.5)--(3.5,1.625)
	;

\fill
  (-1.8,2.9) node {\large $D$}
  (-1,.5) circle (.08) 
  (3.5,1.625) circle (.08)
  (3,1.5) circle (.08)
  (4,1.75) circle (.08)
	(-2,1.2) circle (.08)
	(4,2) circle (.08)
	(2.5,-1.15) circle (.08)
	(1,1.94) node {\footnotesize $B(x,r)$}
	(1,2.55) node {$B(x,2r)$}
	;

\draw[thick]
  (9,.8) circle (2.25)
	;
	
\draw[line width=1.3]
  (6.55,2.08)--(6.65,2.09) (6.75,2.1)--(11.25,2.55) (11.35,2.56)--(11.45,2.57)
	(6.55,.77)--(6.65,.785) (6.75,.8)--(11.25,1.475) (11.35,1.49)--(11.45,1.505)
	(6.55,1.7)--(6.65,1.7) (6.75,1.7)--(11.25,1.7) (11.35,1.7)--(11.45,1.7)
	(6.55,-.08)--(6.65,-.09) (6.75,-.1)--(11.25,-.55) (11.35,-.56)--(11.45,-.57)
	;

\fill
  (6.8,2.86) node {\large $B\left(x_0,\frac{\varepsilon}{2}\right)$}
  (9,.8) circle (.04) node[below] {\large $x_0$}
  (9.8,2.405) circle (.04) node[above] {$x_1$}
  (9.3,-.35) circle (.04) node[above] {$x_2$}
  (8.7,1.7) circle (.04) node[below] {$x_3$}
  (8.6,1.08) circle (.04) node[below] {$x_4$}
	(8.3,2.52) node {$S_1$}
	(8.3,-.53) node {$S_2$}
	(7.7,1.43) node {$S_3$}
	(10.5,1.07) node {$S_4$}
	;

\end{tikzpicture}
\end{center}
\caption{Proof of Lemma~\ref{lem:segments1}\label{fig:segment1}}
\end{figure}
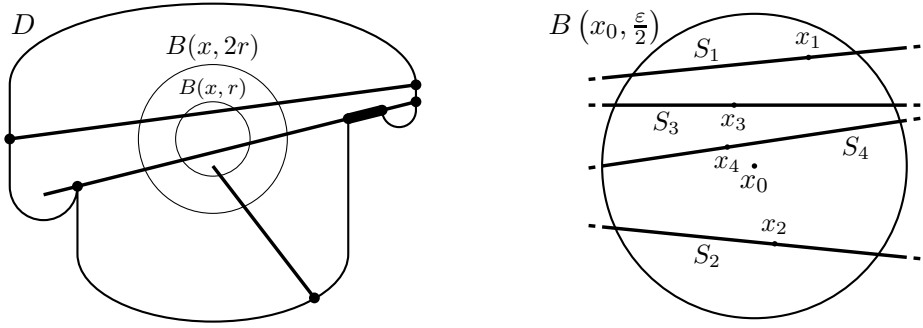
By Lemma~\ref{lem:limit_sets}(i) and (iii), every point from $B(x,r)$ belongs to some limit segment that meets $\BD(D)$. By the triangle inequality, their lengths are larger than $r$. Since countably many straight lines cannot cover $B(x,r)$, there are uncountably many such limit segments placed on different lines. Figure~\ref{fig:segment1} illustrates three examples and their intersections with $\BD(D)$. 

\emph{Claim 2. At most countably many limit segments $ab$ found in Claim 1 satisfy $ab \cap \BD(D) \not \subseteq \{a,b\}$.} 

We shall prove Claim 2 by showing that, for every $\varepsilon >0$, at most finitely many of the segments $S=ab$ described in Claim 1
 contain some $x \in S \cap \BD(D)$ such that $\min\{|ax|,|bx|\} > \varepsilon$.

Suppose that, contrary to the last assertion, there are infinitely many limit segments $S_i=a_ib_i$ and points $x_i \in (S_i \cap \BD(D)) \setminus (B(a_i,\varepsilon) \cup B(b_i,\varepsilon))$, $i=1,2,\ldots$ Since $\BD(D)$ is compact, we can assume that $(x_i)_{i=1}^\infty$ converges to some $x_0 \in \BD(D)$ and that $(x_i)_{i=1}^\infty \subseteq B\left(x_0,\frac{\varepsilon}{2}\right)$, see the right-hand part of Figure~\ref{fig:segment1}. By the triangle inequality, the end-points of each $S_i$ are outside $B\left(x_0,\frac{\varepsilon}{2}\right)$. By Lemma~\ref{lem:limit_sets}(ii), distinct segments $S_i$ do not meet in $B\left(x_0,\frac{\varepsilon}{2}\right)$.

Since $x_0 \in \BD(D)$, $\BD(D) \cap B\left(x_0,\frac{\varepsilon}{2}\right)$ contains a connected component $\Gamma$ of $\BD(D)$ that contains $x_0$ in its relative interior. Since $(x_i)_{i=1}^\infty \subseteq \BD(D)$ and $\lim_{i \to \infty} x_i=x_0$, we get $(x_i)_{i=i_0}^\infty \subseteq \Gamma$ for some $i_0$. But then the arc $\Gamma \subseteq B\left(x_0,\frac{\varepsilon}{2}\right)$ has to connect all the points $x_i$, $i=i_0,i_0+1,\ldots$, without crossing any of the segments $S_i$, because $S_i \subseteq D$ and $\Gamma \subseteq \BD(D)$. This is impossible. The proof of Claim 2 and of Lemma~\ref{lem:segments1} is complete.
\end{proof}

\begin{lem}\label{lem:segments2}
Let $D$ be a gc-self-affine topological disc having a limit segment $S=ab$ such that $S \cap \BD(D) \subseteq \{a,b\}$. Then, for every $\varepsilon \in \left(0,\frac{1}{2}\right]$, there is an equiaffine map $\alpha_\varepsilon: \mathbb R^2 \to \mathbb R^2$ such that
\begin{enumerate}
\item[(i)]
$[-1,1] \times \{0\} \subseteq \alpha_\varepsilon(D) \subseteq [-1-\varepsilon,1+\varepsilon] \times \mathbb R$,
\item[(ii)]
$\alpha_\varepsilon(D) \cap ([-1+\varepsilon,1-\varepsilon] \times \mathbb R)$ is convex,
\item[(iii)]
$\alpha_\varepsilon(D) \cap ([-1+\varepsilon,1-\varepsilon] \times \mathbb R) \subseteq [-1+\varepsilon,1-\varepsilon] \times [-2\AREA(D),2\AREA(D)]$.
\end{enumerate}
\end{lem}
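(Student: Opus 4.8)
The plan is to normalize one of the deep pieces whose nested intersection defines the limit segment $S=ab$, and to read off (i)--(iii) from the geometry of a very thin sliver around $S$. Write $S=\bigcap_{k=1}^\infty D_k$ with $D_k=\varphi_{i_1}\circ\cdots\circ\varphi_{i_k}(D)$, a decreasing sequence of affine copies of $D$ satisfying $\AREA(D_k)\to 0$ and $D_k\to S$ in the Hausdorff metric. I fix an affine map $\eta$ with $\eta(a)=(-1,0)$ and $\eta(b)=(1,0)$, so that $\eta(S)=[-1,1]\times\{0\}$, and set $\Lambda_k=\begin{pmatrix}1&0\\0&\lambda_k\end{pmatrix}$ with $\lambda_k>0$ chosen so that $\alpha:=\Lambda_k\circ\eta\circ\varphi_{i_1}\circ\cdots\circ\varphi_{i_k}$ is equiaffine; since $\AREA(D_k)\to 0$ this forces $\lambda_k\to\infty$. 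Note that $\alpha(D)=\Lambda_k(\eta(D_k))$ and that $\Lambda_k$ fixes the $x$-axis pointwise. For (i), from $S\subseteq D_k$ I obtain $\alpha(D)=\Lambda_k(\eta(D_k))\supseteq\Lambda_k(\eta(S))=\Lambda_k([-1,1]\times\{0\})=[-1,1]\times\{0\}$, the first inclusion. For the second, $\eta(D_k)\to\eta(S)=[-1,1]\times\{0\}$ in the Hausdorff metric and $\Lambda_k$ leaves first coordinates unchanged, so after passing to $k$ large enough every point of $\alpha(D)$ has first coordinate in $[-1-\varepsilon,1+\varepsilon]$.

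The crucial step is the convexity claim (ii). I would first localize the non-convexity of the pieces: by the reasoning in the proof of Lemma~\ref{lem:limit_sets}(iii), every boundary point of $D_k$ lying in $\INT(D)$ sits on a glass-cut segment, where the interior angle of $D_k$ is at most $\pi$; only boundary portions of $D_k$ lying on $\BD(D)$ can be curved or reflex. Next I use $S\cap\BD(D)\subseteq\{a,b\}$ together with $D_k\to S$: for each $\delta>0$ the set $S_\delta=\{p\in S:\|p-a\|\ge\delta,\ \|p-b\|\ge\delta\}$ is a compact subset of $\INT(D)$, whence $\mathrm{dist}(S_\delta,\BD(D))>0$, so for $k$ large the set $D_k\cap\BD(D)$ is confined to arbitrarily small neighbourhoods of $a$ and $b$. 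Consequently, enlarging $k$ once more, the part of $\eta(D_k)$ lying in the vertical strip $[-1+\varepsilon,1-\varepsilon]\times\mathbb R$ comes from a portion of $D_k$ contained in $\INT(D)$ and bounded away from $a,b$; its boundary consists only of images of glass-cut segments meeting at interior angles $\le\pi$, together with the two vertical lines $x=\pm(1-\varepsilon)$. For $k$ large this is a single thin region, hence a simple polygon all of whose interior angles are $\le\pi$, and therefore convex; since $\Lambda_k$ preserves convexity, $\alpha(D)\cap([-1+\varepsilon,1-\varepsilon]\times\mathbb R)$ is convex.

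Finally, (iii) follows from (i), (ii) and equiaffinity. Put $C=\alpha(D)\cap([-1+\varepsilon,1-\varepsilon]\times\mathbb R)$, a convex set containing the segment $I=[-1+\varepsilon,1-\varepsilon]\times\{0\}$ of length $2-2\varepsilon\ge 1$ (as $\varepsilon\le\frac12$), with $\AREA(C)\le\AREA(\alpha(D))=\AREA(D)$. If $C$ contained a point $(x_0,H)$ with $H>0$, then by convexity $C$ would contain the triangle spanned by $(x_0,H)$ and the two endpoints of $I$, whose area is $\tfrac12(2-2\varepsilon)H\ge\tfrac12 H$; hence $H\le 2\AREA(C)\le 2\AREA(D)$, and the same bound applies to the lowest point of $C$. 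I expect the main obstacle to be precisely the convexity statement (ii): one must argue that although the thin pieces $D_k$ may themselves be non-convex, all their non-convexity is pushed into small neighbourhoods of $a$ and $b$, so that their central slabs are genuinely convex. This rests on the structural fact that interior glass-cut edges meet at angles below $\pi$, which is exactly the mechanism behind Lemma~\ref{lem:limit_sets}(iii).
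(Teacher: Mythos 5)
Your proposal is correct and follows essentially the same route as the paper's proof: normalize a sufficiently deep piece whose nested intersection is $S$, compose with a vertical equiaffine rescaling that fixes the first coordinate, derive convexity of the central slab from the fact that glass-cut boundary arcs inside $\INT(D)$ meet at inner angles below $\pi$ (using $S \cap \BD(D) \subseteq \{a,b\}$ to push any non-convex boundary behaviour out of the strip $[-1+\varepsilon,1-\varepsilon]\times\mathbb{R}$), and obtain (iii) from the triangle-area estimate for a convex set of area $\AREA(D)$ containing a horizontal segment of length at least $1$. Your compactness argument confining $D_k \cap \BD(D)$ to small neighbourhoods of $a$ and $b$ merely makes explicit what the paper asserts when choosing $\beta_\varepsilon$ with properties \eqref{eq:p6-3} and \eqref{eq:p6-4}.
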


\begin{proof}
W.l.o.g., i.e., after some equiaffine transformation, we have $S=[-1,1] \times \{0\}$, i.e., $a=\genfrac{(}{)}{0pt}{1}{-1}{0}$ and $b=\genfrac{(}{)}{0pt}{1}{1}{0}$.
Since $S$ is the Hausdorff limit of a decreasing sequence of pieces in certain gc-self-affinities of $D$ and since $S \setminus \{\genfrac{(}{)}{0pt}{1}{-1}{0},\genfrac{(}{)}{0pt}{1}{1}{0}\} \subseteq \INT(D)$, there exist some $\delta=\delta(\varepsilon) \in (0,\varepsilon]$ and an affine transformation $\beta_\varepsilon$ such that
\begin{eqnarray}
&[-1,1]\times \{0\} \subseteq \beta_\varepsilon(D) \subseteq [-1-\varepsilon,1+\varepsilon] \times \mathbb R,&\label{eq:p6-3}\\
&\beta_\varepsilon(D) \cap ([-1+\varepsilon,1-\varepsilon] \times \mathbb R) \subseteq [-1+\varepsilon,1-\varepsilon]\cap [-\delta,\delta] \subseteq \INT(D).&\label{eq:p6-4}
\end{eqnarray}
Since the piece $\beta_\varepsilon(D)$ belongs to a gc-dissection of $D$, $\BD(\beta_\varepsilon(D)) \cap \INT(D)$ is a finite union of polygonal arcs with inner angles smaller than $\pi$. Hence \eqref{eq:p6-4} implies
\begin{equation}
\beta_\varepsilon(D) \cap ([-1+\varepsilon,1-\varepsilon] \times \mathbb R) \text{ is convex.}\label{eq:p6-5}
\end{equation}
We define the equiaffine map $\alpha_\varepsilon$ as $\alpha_\varepsilon=\gamma \circ \beta_\varepsilon$ with $\gamma\genfrac{(}{)}{0pt}{1}{\xi_1}{\xi_2}=\big(\genfrac{}{}{0pt}{1}{\xi_1}{\frac{1}{\det(\beta_\varepsilon)}\xi_2}\big)$. Then \eqref{eq:p6-3} implies (i) and \eqref{eq:p6-5} yields (ii).
Finally, since $\varepsilon \le \frac{1}{2}$, (i) gives
\[
\textstyle\left[-\frac{1}{2},\frac{1}{2}\right] \times \{0\} \subseteq \alpha_\varepsilon(D) \cap ([-1+\varepsilon,1-\varepsilon] \times \mathbb R).
\]
By (ii) and $\AREA(\alpha_\varepsilon(D))=\AREA(D)$, this shows (iii).
\end{proof}

Every equiaffine map $\alpha: \mathbb R^2 \to \mathbb R^2$ has a unique representation $\alpha\genfrac{(}{)}{0pt}{1}{\xi_1}{\xi_2}=\genfrac{(}{)}{0pt}{1}{\alpha_{11}\xi_1+\alpha_{12}\xi_2+\alpha_{13}}{\alpha_{21}\xi_1+\alpha_{22}\xi_2+\alpha_{23}}$ with $(\alpha_{11},\ldots,\alpha_{23}) \in \mathbb R^6$. This way the set $\AFF_2^1$ of all equiaffine maps corresponds to a closed subset of $\mathbb R^6$, and the natural topology of $\mathbb R^6$ gives rise to a respective topology on $\AFF_2^1$. The following lemma is routine, based on elementary topological properties of $\mathbb R^n$ and the localization of the Hausdorff convergence in the sense of \cite[Theorem 1.8.8]{schneider2014}.

\begin{lem}\label{lem:routine}
\begin{enumerate}
\item[(i)]
For every $x_0 \in \mathbb R^2$, every $r > 0$ and every compact set $C \subseteq \mathbb R^2$, $\{\alpha \in \AFF_2^1: \alpha(B(x_0,r)) \subseteq C\}$ is compact.
\item[(ii)]
Suppose that $\alpha_0,\alpha_1,\ldots \in \AFF_2^1$ satisfy $\lim_{i \to \infty} \alpha_i= \alpha_0$. Then, for every non-empty compact set $C \subseteq \mathbb R^2$, $\lim_{i \to \infty} \alpha_i(C)=\alpha_0(C)$ in the Hausdorff metric.
\end{enumerate}
\end{lem}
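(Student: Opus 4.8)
The plan is to treat both parts as standard facts about the six real parameters $(\alpha_{11},\ldots,\alpha_{23})$ encoding an equiaffine map, and in each case to reduce the geometric claim to an elementary estimate on the linear part $A_\alpha=\bigl(\begin{smallmatrix}\alpha_{11}&\alpha_{12}\\\alpha_{21}&\alpha_{22}\end{smallmatrix}\bigr)$ and the translation part $t_\alpha=(\alpha_{13},\alpha_{23})$. Since $\AFF_2^1$ is already known to be a closed subset of $\mathbb R^6$, for part (i) it suffices by the Heine--Borel theorem to show that $K=\{\alpha\in\AFF_2^1:\alpha(B(x_0,r))\subseteq C\}$ is closed and bounded in $\mathbb R^6$.

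For closedness I would use that, for each fixed $y$, the evaluation $\alpha\mapsto\alpha(y)$ is continuous in the parameters. If $\alpha_i\to\alpha_0$ with $\alpha_i\in K$, then $\alpha_i(y)\to\alpha_0(y)$ for every $y\in B(x_0,r)$; since each $\alpha_i(y)\in C$ and $C$ is closed, $\alpha_0(y)\in C$, whence $\alpha_0\in K$ (membership in $\AFF_2^1$ being automatic from its closedness). For boundedness I would use the one genuinely geometric observation. The image $\alpha(B(x_0,r))$ is the ellipse centred at $\alpha(x_0)$ with semi-axes $r\sigma_1\ge r\sigma_2>0$, where $\sigma_1,\sigma_2$ are the singular values of $A_\alpha$, and the equiaffine condition $|\det A_\alpha|=1$ forces $\sigma_1\sigma_2=1$. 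Picking $M$ with $C\subseteq B(0,M)$, containment of this ellipse in $C$ bounds its diameter by $2M$, so $2r\sigma_1\le 2M$, giving $\sigma_1\le M/r$ and hence $\sigma_2=\sigma_1^{-1}\ge r/M$. Thus $\|A_\alpha\|=\sigma_1$ is bounded, which bounds the four entries of $A_\alpha$; finally $\alpha(x_0)=A_\alpha x_0+t_\alpha\in C$ together with this bound bounds $t_\alpha$. All six coordinates then lie in a fixed box, proving (i).

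For (ii) I would exploit that convergence in $\AFF_2^1$ means exactly $A_{\alpha_i}\to A_{\alpha_0}$ and $t_{\alpha_i}\to t_{\alpha_0}$, which yields uniform convergence of the maps on the compact set $C$. With $R=\max_{\xi\in C}|\xi|<\infty$, for every $\xi\in C$ one has $|\alpha_i(\xi)-\alpha_0(\xi)|\le\|A_{\alpha_i}-A_{\alpha_0}\|\,R+|t_{\alpha_i}-t_{\alpha_0}|=:\varepsilon_i$ with $\varepsilon_i\to 0$. This uniform bound is precisely what drives Hausdorff convergence via the localization principle of \cite[Theorem 1.8.8]{schneider2014}: each point $\alpha_i(\xi)\in\alpha_i(C)$ lies within $\varepsilon_i$ of $\alpha_0(\xi)\in\alpha_0(C)$ and conversely, so $d_H(\alpha_i(C),\alpha_0(C))\le\varepsilon_i\to 0$.

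The main obstacle, mild as it is, is the boundedness step in (i): one must notice that fitting the area-normalized ellipse into the bounded set $C$ not only caps its longest semi-axis but, through the relation $\sigma_1\sigma_2=1$, simultaneously bounds the shortest semi-axis away from $0$. This is what prevents $A_\alpha$ from degenerating and keeps the six-parameter set bounded; everything else is routine continuity together with Heine--Borel.
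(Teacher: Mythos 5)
Your proof is correct and supplies exactly the routine details the paper omits (the lemma is stated without proof, with only a pointer to elementary topology of $\mathbb R^6$ and \cite[Theorem 1.8.8]{schneider2014}): closedness plus the singular-value bound $\sigma_1\le M/r$ with Heine--Borel for (i), and uniform convergence on $C$ for (ii). Two cosmetic remarks: the lower bound $\sigma_2\ge r/M$ is not actually needed --- boundedness of the parameter set uses only $\sigma_1\le M/r$, and nondegeneracy of limit maps is automatic from the closedness of $\AFF_2^1$ in $\mathbb R^6$ --- and your direct estimate $d_H\bigl(\alpha_i(C),\alpha_0(C)\bigr)\le\varepsilon_i$ already proves (ii) outright, making the appeal to the localization theorem superfluous.
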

 
\begin{lem}\label{lem:localization}
Let $D$ be a topological disc. Then there exist a disc $B(x_0,r) \subseteq D$, $x_0 \in D$, $r > 0$, and $\mu \in (0,1)$ such that, for every affine functional $\beta: \mathbb R^2 \to \mathbb R$,
\begin{align*}
[\min \beta(B& (x_0,r)), \max \beta(B(x_0,r))]\\
 \subseteq &
\textstyle\left[(1-\mu)\min \beta(D)+\mu\max \beta(D), (1-\mu)\max \beta(D)+\mu\min \beta(D)\right].
\end{align*}
\end{lem}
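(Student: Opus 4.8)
The plan is to reduce everything to support data of $\CONV(D)$ and then run a compactness argument on the unit circle. The first observation is that for any affine functional $\beta(x)=\langle v,x\rangle+c$ one has $\min\beta(D)=\min\beta(\CONV(D))$ and $\max\beta(D)=\max\beta(\CONV(D))$, so the target interval on the right-hand side depends only on the convex hull $\CONV(D)$. If $v=0$, then $\beta$ is constant and the asserted inclusion is the trivial $\{c\}\subseteq\{c\}$; hence I may assume $v\neq 0$ and, after scaling $\beta$ by the positive factor $1/|v|$ (which scales both sides of the inclusion equally and so preserves it), that $|v|=1$. Writing $w(v)=\max\beta(D)-\min\beta(D)$ for the width of $\CONV(D)$ in direction $v$, the right-hand interval becomes $[\min\beta(D)+\mu w(v),\,\max\beta(D)-\mu w(v)]$, while the left-hand interval is simply $[\beta(x_0)-r,\,\beta(x_0)+r]$, because $B(x_0,r)$ projects onto direction $v$ as a segment of half-length $r$ centred at $\beta(x_0)$. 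Thus the whole inclusion is equivalent to the two scalar inequalities $\beta(x_0)-\min\beta(D)\ge r+\mu w(v)$ and $\max\beta(D)-\beta(x_0)\ge r+\mu w(v)$; geometrically, $x_0$ must lie at depth at least $r+\mu w(v)$ above the lower and below the upper supporting line of $\CONV(D)$ in every direction.

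Next I would choose the centre $x_0$. Since $D$ is a topological disc it has non-empty interior, so I pick any $x_0\in\INT(D)$; monotonicity of the interior under $D\subseteq\CONV(D)$ gives $x_0\in\INT(\CONV(D))$. Consequently both depths $\beta(x_0)-\min\beta(D)$ and $\max\beta(D)-\beta(x_0)$ (in each of which the additive constant $c$ cancels, so each depends only on the unit vector $v$) are strictly positive. Dividing by $w(v)$, which is strictly positive because $\CONV(D)$ is two-dimensional and which, like the two depths, depends continuously on $v$, I obtain a continuous, strictly positive function $f(v)=\min\{\beta(x_0)-\min\beta(D),\,\max\beta(D)-\beta(x_0)\}/w(v)$ on the unit circle. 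Compactness of the circle yields $c_0:=\min_{|v|=1}f(v)>0$; and since the two depths sum to $w(v)$, their minimum is at most $w(v)/2$, so $c_0\le\tfrac12$.

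Finally I set $\mu=c_0/2\in(0,\tfrac14]\subseteq(0,1)$ and choose $r>0$ so small that simultaneously $B(x_0,r)\subseteq D$ (possible since $x_0\in\INT(D)$) and $r\le\mu\,w_{\min}$, where $w_{\min}=\min_{|v|=1}w(v)>0$. Then for every unit $v$ each depth is at least $c_0\,w(v)=2\mu\,w(v)\ge r+\mu\,w(v)$, since $\mu\,w(v)\ge\mu\,w_{\min}\ge r$; hence both required inequalities hold and the inclusion follows. I do not expect a genuine obstacle here, as the lemma is essentially a uniform-interiority statement. The only points deserving care are the reduction to $\CONV(D)$ (which turns the right-hand quantities into support-function data), the fact that for non-convex $D$ the centre must be taken in $\INT(D)$ rather than merely in $\INT(\CONV(D))$ — which causes no conflict because $\INT(D)\subseteq\INT(\CONV(D))$ — and the passage from pointwise positivity of $f$ to the uniform lower bound $c_0$ via compactness of the unit circle.
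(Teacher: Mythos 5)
Your proof is correct, but it follows a genuinely different route from the paper's. The paper avoids compactness entirely: it fixes $x_0\in\INT(D)$ and radii $r<R$ with $B(x_0,r)\subseteq B(x_0,2r)\subseteq D\subseteq B(x_0,R)$, then uses the exact affine identities relating the extrema of $\beta$ over the concentric balls $B(x_0,r)$, $B(x_0,2r)$, $B(x_0,R)$ (e.g.\ $\frac{3}{2}\min\beta(B(x_0,r))-\frac{1}{2}\max\beta(B(x_0,r))=\min\beta(B(x_0,2r))$), combines them with the sandwich inclusions, and reads off the explicit constant $\mu=\frac{r}{2r+R}$ by pure algebra, uniformly in $\beta$ with no continuity argument. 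You instead normalize to unit directions, rephrase the inclusion as two depth inequalities for $x_0$ relative to the supporting lines of $\CONV(D)$, and extract a uniform bound $c_0=\min_{|v|=1}f(v)>0$ from continuity of the support-function data and compactness of the unit circle, then couple $r$ to $\mu$ via $r\le\mu w_{\min}$. Your argument is sound — the reduction to $\CONV(D)$, the cancellation of the additive constant in the depths, the positivity of $w(v)$ from two-dimensionality, and the final chain $d_\pm(v)\ge 2\mu w(v)\ge r+\mu w(v)$ all check out, and the order of quantifiers in the lemma permits choosing $r$ after $\mu$. The trade-off: the paper's two-ball sandwich is more elementary and constructive, producing a closed-form $\mu$ from crude inradius/circumradius data alone; your approach is non-constructive but isolates the geometric content (a uniform lower bound on the normalized depth of an interior point in all directions) and can yield a larger, geometry-adapted $\mu$ for a given $x_0$. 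Both generalize verbatim to higher dimensions.
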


\begin{proof}
We fix $x_0 \in \INT(D)$ and $R > r > 0$ such that 
\[
B(x_0,r) \subseteq B(x_0,2r) \subseteq D \subseteq B(x_0,R).
\]
Since $\beta$ is affine, we have
\begin{eqnarray}
&\textstyle\frac{3}{2}\min\beta(B(x_0,r))-\frac{1}{2}\max \beta(B(x_0,r))=\min \beta(B(x_0,2r)),&\label{eq:p8-e4}\\
&\textstyle\frac{r-R}{2r}\min\beta(B(x_0,r))+\frac{r+R}{2r}\max \beta(B(x_0,r))=\max \beta(B(x_0,R)).&\label{eq:p8-e5}
\end{eqnarray}
By \eqref{eq:p8-e4} and $\min\beta(B(x_0,2r)) \ge \min \beta(D)$,
\[
3(r+R) \min\beta(B(x_0,r))- (r+R)\max \beta(B(x_0,r)) \ge 2(r+R) \min\beta(D).
\]
Similarly, by \eqref{eq:p8-e5} and $\max\beta(B(x_0,R)) \ge \max\beta(D)$,
\[
(r-R)\min\beta(B(x_0,r))+(r+R)\max \beta(B(x_0,r)) \ge 2r \max\beta(D).
\]
Adding the last two inequalities and dividing by $4r+2R$, we get
\begin{equation}\label{eq:p8-e6}
\textstyle
\min\beta(B(x_0,r)) \ge \frac{r+R}{2r+R} \min \beta(D)+\frac{r}{2r+R} \max\beta(D).
\end{equation}
In the same way we arrive at
\begin{equation}\label{eq:p8-e7}
\textstyle
\max\beta(B(x_0,r)) \le \frac{r+R}{2r+R} \max \beta(D)+\frac{r}{2r+R} \min\beta(D).
\end{equation}
Inequalities \eqref{eq:p8-e6} and \eqref{eq:p8-e7} yield the claim of Lemma~\ref{lem:localization} with $\mu=\frac{r}{2r+R}$.
\end{proof}

\begin{lem}\label{lem:convex}
If a topological disc $D$ is gc-self-affine such that there is a limit segment $S=ab$ with $S \cap\BD(D) \subseteq \{a,b\}$, then $D$ is convex.
\end{lem}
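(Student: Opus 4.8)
The goal is to show that a gc-self-affine disc $D$ possessing a "clean" limit segment $S=ab$ (meeting $\BD(D)$ only possibly at its endpoints) is convex. The hypothesis of Lemma~\ref{lem:convex} is exactly the conclusion of Lemma~\ref{lem:segments1}, so together with the preceding lemmas these two results will prove that every gc-self-affine disc is convex. The strategy is to exploit the self-similar structure: near the limit segment $S$, the disc $D$ contains arbitrarily thin, nearly-straight copies of itself, and by iterating we can force a uniform control on how far $\BD(D)$ can bulge outward. The normalization provided by Lemma~\ref{lem:segments2} is the right coordinate frame: after applying $\alpha_\varepsilon$, the central strip $[-1+\varepsilon,1-\varepsilon]\times\mathbb R$ meets $\alpha_\varepsilon(D)$ in a convex set of bounded height, while the whole image lies in the vertical strip $[-1-\varepsilon,1+\varepsilon]\times\mathbb R$.

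**Main steps.** First I would argue that it suffices to prove convexity locally along $S$, since convexity of $D$ can be reduced to ruling out any inward dent in $\BD(D)$, and every boundary point of a non-convex disc gives such a dent. Second, I would use the self-affinity to produce, inside $D$, a sequence of affine copies $\varphi_{i_1}\circ\cdots\circ\varphi_{i_k}(D)$ whose Hausdorff limit is $S$; for large $k$ these copies are pinched into a neighborhood of $S$ and, by Lemma~\ref{lem:segments2}(ii), their "middle portions" are convex with bounded height $2\,\AREA(D)$. Third — this is the crux — I would combine Lemma~\ref{lem:localization} with Lemma~\ref{lem:routine} to get a \emph{uniform} geometric estimate. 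Lemma~\ref{lem:localization} provides a fixed ball $B(x_0,r)\subseteq D$ and a constant $\mu\in(0,1)$ controlling, for every affine functional $\beta$, how the extreme values of $\beta$ on the ball relate to its extreme values on all of $D$. Reading this for the affine functional measuring horizontal (resp. vertical) displacement in the normalized picture, it says the copy of the ball sits a definite fraction $\mu$ away from the extremes, hence any outward protrusion of $\BD(D)$ beyond the segment must reappear, scaled, inside each iterated piece — forcing a bound on the protrusion that is strictly contractive under iteration.

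**The obstacle and the contraction argument.** The hard part will be turning this self-referential bound into an actual contradiction with non-convexity. I expect the argument to run by contradiction: assume $\BD(D)$ has a point $p$ lying strictly outside the convex hull of $S$ on the "wrong" side, quantify its outward deviation $h>0$ in the normalized frame, and then show via Lemma~\ref{lem:localization} that some iterated piece must contain a scaled copy of this same deviation, but positioned in the central strip where Lemma~\ref{lem:segments2}(ii)--(iii) forbid it (the central strip's intersection with $\alpha_\varepsilon(D)$ is convex and of bounded height). The compactness in Lemma~\ref{lem:routine}(i) is what lets me extract a limiting normalized configuration and pass to the limit $\varepsilon\to 0$ cleanly, while Lemma~\ref{lem:routine}(ii) guarantees the Hausdorff continuity needed to transfer convexity of the limit to the pieces. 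The technical delicacy is bookkeeping the two competing directions (along $S$ versus transverse to it) so that the factor $\mu$ genuinely forces the deviation down to zero rather than merely bounding it; I anticipate that tracking a single worst-case boundary point and iterating the $\mu$-estimate until its deviation is squeezed below every $\varepsilon$ is the cleanest route, after which the arbitrary position of $S$ along $\BD(D)$ (guaranteed uncountably often by Claim~1 of Lemma~\ref{lem:segments1}) yields convexity everywhere.
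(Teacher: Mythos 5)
Your plan assembles the right ingredients---the normalization $\alpha_\varepsilon$ of Lemma~\ref{lem:segments2}, the ball-and-$\mu$ estimate of Lemma~\ref{lem:localization}, and the compactness/continuity statements of Lemma~\ref{lem:routine}---but the argument you build from them has two genuine gaps. First, your local-to-global step is unjustified: the hypothesis furnishes a \emph{single} limit segment $S$, and convexity of $D$ near $S$ says nothing about dents elsewhere on $\BD(D)$; your fallback, invoking the uncountably many segments from Claim~1 of Lemma~\ref{lem:segments1} to handle ``the arbitrary position of $S$ along $\BD(D)$'', both exceeds the hypothesis of the lemma being proved and does not work, since those segments are only known to meet $\BD(D)$ somewhere, not to pass near every boundary point. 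Second, the ``strictly contractive iteration of the $\mu$-estimate'' has no mechanism behind it: $\mu$ is a one-shot constant comparing extreme values of affine functionals on $B(x_0,r)$ and on $D$, and its only role is to show that the first coordinates of $\alpha_\varepsilon(B(x_0,r))$ lie in $[-1+\varepsilon,1-\varepsilon]$ once $\varepsilon\le\varepsilon_0=\min\left\{\frac{2\mu}{2-\mu},\frac{1}{2}\right\}$, so that Lemma~\ref{lem:segments2}(iii) pins the images of the fixed ball inside the fixed compact set $[-1,1]\times[-2\AREA(D),2\AREA(D)]$. Nothing contracts under iteration; moreover, an ``outward deviation $h$'' of a boundary point is not preserved up to a scalar by the iterates $\varphi_{i_1}\circ\cdots\circ\varphi_{i_k}$, which are arbitrarily anisotropic, so the transverse bookkeeping you flag as delicate cannot in fact be closed as sketched.

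The idea you are missing is how middle-strip convexity becomes global convexity \emph{without} any contradiction or iteration. By Lemma~\ref{lem:routine}(i), the ball estimate above places the maps $\alpha_{1/n}$ in a compact subset of $\AFF_2^1$; pass to a subsequence with $\alpha_{1/n}\to\alpha_0$, so that $\alpha_{1/n}(D)\to\alpha_0(D)$ in the Hausdorff metric by Lemma~\ref{lem:routine}(ii). Now the outer bound of Lemma~\ref{lem:segments2}(i) tightens in the limit to $\alpha_0(D)\subseteq[-1,1]\times\mathbb{R}$, while the convexity of the middle strips from Lemma~\ref{lem:segments2}(ii) passes to the limit and shows that $\alpha_0(D)\cap((-1,1)\times\mathbb{R})$ is convex. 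Since $\alpha_0(D)$ is a topological disc contained in the closed strip, it equals the closure of its intersection with the open strip, hence is convex, and so is $D=\alpha_0^{-1}(\alpha_0(D))$. In the limiting frame the convex middle portion exhausts all of $D$: the single limit segment suffices, and no dent ever needs to be chased around the boundary.
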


\begin{proof}
We use the equiaffine maps $\alpha_\varepsilon$, $0 < \varepsilon \le \frac{1}{2}$, obtained from Lemma~\ref{lem:segments2} and the circular disc $B(x_0,r) \subseteq D$ and the real $\mu$ given by Lemma~\ref{lem:localization}. 

Let $\varepsilon_0=\min\left\{\frac{2\mu}{2-\mu},\frac{1}{2}\right\}$. First we show that, for all $\varepsilon \in (0,\varepsilon_0]$,
\begin{equation}\label{eq:pf9-1}
\alpha_\varepsilon(B(x_0,r)) \subseteq [-1,1] \times [-2\AREA(D),2\AREA(D)].
\end{equation}
For that, let $\varepsilon \in (0,\varepsilon_0]$ be fixed and let $\pi:\mathbb R^2 \to \mathbb R$ denote the projection onto the first coordinate. Then Lemma~\ref{lem:localization} and Lemma~\ref{lem:segments2}(i) give
\begin{align*}
\min \pi \circ \alpha_\varepsilon(B(x_0,r)) &\ge (1-\mu)\cdot \min \pi \circ \alpha_\varepsilon(D)+\mu \cdot \max \pi \circ \alpha_\varepsilon(D)\\
&\ge (1-\mu)(-1-\varepsilon)+\mu \cdot 1\\
&= -1+(2\mu-(1-\mu)\varepsilon)\\
& \ge -1+\varepsilon,
\end{align*}
where the last estimate comes from $\varepsilon \le \varepsilon_0 \le \frac{2\mu}{2-\mu}$. In the same way we get
\[
\max \pi \circ \alpha_\varepsilon(B(x_0,r)) \le 1-\varepsilon.
\]
Consequently, the first coordinates of all points from $\alpha_\varepsilon(B(x_0,r))$ are in $[-1+\varepsilon,1-\varepsilon]$. Hence
\[
\alpha_\varepsilon(B(x_0,r)) \subseteq ([-1+\varepsilon,1-\varepsilon]\times\mathbb R) \cap \alpha_\varepsilon(D),
\]
and Lemma~\ref{lem:segments2}(iii) yields \eqref{eq:pf9-1}.

For $\varepsilon_n=\frac{1}{n}$, \eqref{eq:pf9-1} gives
\[
\alpha_{\frac{1}{n}}(B(x_0,r)) \subseteq [-1,1] \times [-2\AREA(D),2\AREA(D)],
\]
provided $n \ge n_0= \lceil\frac{1}{\varepsilon_0}\rceil$. 
Lemma~\ref{lem:routine}(i) shows that the sequence $\big(\alpha_{\frac{1}{n}}\big)_{n=n_0}^\infty$ is in a compact set of equiaffine transformations, in turn having a convergent subsequence. W.l.o.g.,
$\lim_{n \to \infty} \alpha_{\frac{1}{n}}=\alpha_0 \in \AFF_2^1$. By Lemma~\ref{lem:routine}(ii),
\begin{equation}\nonumber
\lim_{n \to \infty} \alpha_{\frac{1}{n}}(D)=\alpha_0(D)
\end{equation}
in the Hausdorff metric. In particular,
\[
\textstyle
\alpha_0(D) \cap ((-1,1) \times \mathbb{R})= ((-1,1) \times \mathbb{R}) \cap \lim_{n \to \infty} \alpha_{\frac{1}{n}}(D) \cap \left(\left[-1+\frac{1}{n},1-\frac{1}{n}\right] \times \mathbb{R}\right).
\]
Now Lemma~\ref{lem:segments2}(ii) shows that $\alpha_0(D) \cap ((-1,1) \times \mathbb{R})$ is convex. Moreover, Lemma~\ref{lem:segments2}(i) yields $\alpha_0(D) \subseteq [-1,1] \times \mathbb R$. Therefore $\alpha_0(D)$ is the closure of the convex set $\alpha_0(D) \cap ((-1,1) \times \mathbb{R})$, because $\alpha_0(D)$ is a topological disc. Thus $\alpha_0(D)$ is convex, as well as $D= \alpha_0^{-1}(\alpha_0(D))$.
\end{proof}

Lemmas~\ref{lem:segments1} and \ref{lem:convex} together show that every gc-self-affine topological disc is convex.


\section{Reduction to triangles and quadrangles\label{sec:triangles_quadrangles}}

\begin{proof}[Proof of Theorem~\ref{thm:main}(i)]
Let the disc $D$ be gc-self-affine. Then it is convex as we have seen above. Moreover, it is a convex polygon by \cite{richter2012}. 

A dissection of a convex $k_0$-gon $D_0$ along some line segment splits it into a convex $k_1$-gon $D_0^*$ and a convex $k_2$-gon $D_0^{**}$ with $k_1+k_2 \in \{k_0+2,k_0+3,k_0+4\}$. In particular, $\min\{k_1,k_2\} \le \frac{k_0+4}{2}$, whence $\min\{k_1,k_2\} \le k_0$ if $k_0 \in \{3,4\}$ and $\min\{k_1,k_2\} < k_0$ if $k_0 \ge 5$.
Iteration of this argument shows that each gc-dissection of a convex $k_0$-gon $D_0$ with $k_0 \ge 5$ has a tile with less than $k_0$ vertices. Consequently, the gc-self-affine disc $D$ must be a triangle or a convex quadrangle.
\end{proof}


\section{Parametrization of quadrangles\label{sec:parametrization}}

We call a side of a convex quadrangle \emph{opening} (or \emph{closing} or \emph{constant}, respectively) if the inner angles adjacent to that side sum up to a value less than $\pi$ (or larger than $\pi$ or equal to $\pi$, respectively). Suppose that the sides $bc$ and $cd$ of the quadrangle $abcd$ are closing, and let $s$ be the intersection point of the straight lines $l(ab)$ and $l(cd)$. Then $0 < \frac{|bs|}{|as|} < \frac{|cs|}{|ds|} < 1$. We use $\alpha:=\frac{|bs|}{|as|}$ and $\beta:=\frac{|cs|}{|ds|}$ for parametrizing the quadrangle and denote its affine type by $Q(\alpha,\beta)$ (over the closing side $bc$). The same can be done over the closing side $cd$. If $t$ is the intersection of $l(bc)$ and $l(ad)$, we obtain $0 < \bar{\alpha}:=\frac{|dt|}{|at|} < \bar{\beta}:=\frac{|ct|}{|bt|} < 1$. We get the second parametrization $Q\left(\bar{\alpha},\bar{\beta}\right)$ with 
\begin{equation}\label{eq:flip}
\left(\bar{\alpha},\bar{\beta}\right)= \frac{1-\beta}{(1-\alpha)\beta}(\alpha,\beta),
\end{equation}
see Figure~\ref{fig:parameter1} for the situation $a=(0,0)$, $s=(1,0)$, $t=(0,1)$.
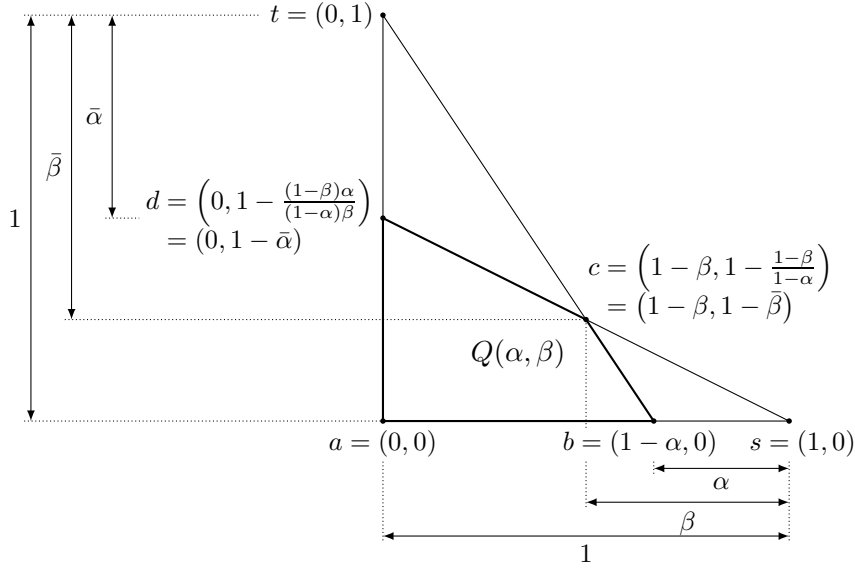
\begin{figure}
\begin{center}
\begin{tikzpicture}[scale=.9]

\fill
  (0,0) circle (.04) node[below] {$a=(0,0)$}
	(4,0) circle (.04) 
	(3.8,0) node[below] {$b=(1-\alpha,0)$}
	(6,0) circle (.04) 
	(6.2,0) node[below] {$s=(1,0)$}
  (3,1.5) circle (.04) 
	(2.7,1.3) node[above right] {$\begin{array}{r@{\;=\;}  l@{}} c&\left(1-\beta,1-\frac{  1-\beta}{1-\alpha}\right)\\
	&\left(1-\beta,1-\bar{\beta}\right)
	\end{array}$}
	(0,3) circle (.04) node[left] {$\begin{array}{r@{\;=\;}l@{}}
	d&\left(0,1-\frac{(1-\beta)\alpha}{(1-\alpha)\beta}\right)\\
	&\left(0,1-\bar{\alpha}\right)
	\end{array}$}
	(0,6) circle (.04) node[left] {$t=(0,1)$}
	(5,-.7) node[below] {$\alpha$}
	(4.5,-1.2) node[below] {$\beta$}
	(3,-1.7) node[below] {$1$}
	(-4,4.5) node[left] {$\bar{\alpha}$}
	(-4.6,3.75) node[left] {$\bar{\beta}$}
	(-5.2,3) node[left] {$1$}
	(2,1) node {\large $Q(\alpha,\beta)$}
	;

\draw[thick]
  (0,0)--(4,0)--(3,1.5)--(0,3)--cycle
  ;
  
\draw
  (0,3)--(0,6)--(3,1.5)--(6,0)--(4,0)
  ;

\draw[densely dotted]
  (0,-.6)--(0,-1.8)
  (3,1.5)--(3,-.1)
	(3,-.6)--(3,-1.3)
  (4,-.6)--(4,-0.8)
  (6,-.6)--(6,-1.8)
	(-5.3,6)--(-1.8,6)
	(-4.1,3)--(-3.6,3)
	(-4.7,1.5)--(3,1.5)
	(-5.3,0)--(0,0)
	;

\draw[latex-latex]
  (4,-.7)--(6,-.7)
  ;

\draw[latex-latex]
  (3,-1.2)--(6,-1.2)
	;
	
\draw[latex-latex]
	(0,-1.7)--(6,-1.7)
	;

\draw[latex-latex]
  (-4,6)--(-4,3)
	;
	
\draw[latex-latex]
	(-4.6,6)--(-4.6,1.5)
	;

\draw[latex-latex]
  (-5.2,6)--(-5.2,0)
  ;

\end{tikzpicture}
\end{center}
\caption{Parametrization of non-trapezoids.\label{fig:parameter1}}
\end{figure}
Both $Q(\alpha,\beta)$ and $Q\left(\bar{\alpha},\bar{\beta}\right)$ describe the same affine type of quadrangles. For further operations we keep both parametrizations and call them \emph{flips} of each other; $Q(\alpha,\beta)^F=Q\left(\bar{\alpha},\bar{\beta}\right)$ and $Q\left(\bar{\alpha},\bar{\beta}\right)^F=Q(\alpha,\beta)$. Note that $Q(\alpha,\beta)$ and $Q\left(\bar{\alpha},\bar{\beta}\right)$ give rise to the same value $0 < \frac{\alpha}{\beta}=\frac{\bar{\alpha}}{\bar{\beta}} < 1$, that we call the \emph{affine quotient} of that class of quadrangles. 

Clearly, for every choice of $0 < \alpha < \beta < 1$, $Q(\alpha,\beta)$ describes a unique affine type of convex quadrangles, and all non-trapezoidal convex quadrangles are covered this way.

Now let the quadrangle $abcd$ be a trapezoid with (unique) closing side $bc$. Again let $s$ be the common point of $l(ab)$ and $l(cd)$. Then $0 < \frac{|bs|}{|as|}=\frac{|cs|}{|ds|} < 1$, we use $\alpha:=\frac{|bs|}{|as|}$ for parametrizing the affine type of our trapezoid and write $T(\alpha)$ for the respective class. We call $1$ the \emph{affine quotient} of $T(\alpha)$ (motivated by $\frac{\alpha}{\alpha}=1$).

Clearly, all choices of $0 < \alpha < 1$ give rise to mutually different classes $T(\alpha)$, and this way all trapezoids are covered except for parallelograms. Note that the ratio of the lengths of the parallel sides of trapezoids of type $T(\alpha)$ is $\alpha$, too.

Finally, we write $P$ for the class of all parallelograms and call $1$ their \emph{affine quotient}.

In the sequel we will sometimes write $Q(\alpha,\beta)$, $T(\gamma)$ and $P$ for particular representatives of these classes of quadrangles.


\section{Parameters of compositions of two quadrangles\label{sec:quadrangles}}

When studying a gc-dissection of a disc $D$, we associate a (not necessarily unique) binary \emph{dissection tree} to its (not necessarily unique) process of dissection. The original disc $D$ is its root. In a first step $D$ is dissected into two discs, the children of $D$. Similarly, if a disc is cut further into two discs, these appear as children of that disc. Accordingly, the leafs of the tree are the tiles of the final dissection of $D$.

\begin{lem}\label{lem:tree}
All vertices of a dissection tree associated to the gc-dissection representing a gc-self-affinity of a quadrangle are quadrangles. In particular, in every step of dissection the cut is made between relative inner points of opposite sides of the respective quadrangle.
\end{lem}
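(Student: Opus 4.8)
The plan is to control the vertex counts of all pieces through a single global invariant and then to use finiteness of the dissection tree to pin down the shape of every cut. Recall from the proof of Theorem~\ref{thm:main}(i) that any glass-cut of a convex $k_0$-gon produces a convex $k_1$-gon and a convex $k_2$-gon with $k_1+k_2 \in \{k_0+2,k_0+3,k_0+4\}$, where the maximal value $k_0+4$ occurs exactly when both endpoints of the cut lie in the relative interiors of sides (at vertices the sum drops). For a current collection of pieces I would define the \emph{deficiency} $\Delta=\sum(k-4)$, summed over all pieces, $k$ being the number of vertices of a piece. Each cut replaces one $k_0$-gon by a $k_1$-gon and a $k_2$-gon, so it changes $\Delta$ by $k_1+k_2-k_0-4$, a quantity lying in $\{-2,-1,0\}$ and hence always $\le 0$. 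Since the root is the quadrangle $D$ and, in a gc-self-affinity, every leaf is an affine image of $D$ and therefore again a quadrangle, both the initial and the final value of $\Delta$ equal $0$. A sum of nonpositive increments that vanishes forces every increment to vanish, so every cut must satisfy $k_1+k_2=k_0+4$; that is, every cut connects relative inner points of two sides.

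It remains to locate those two sides and to exclude triangles and pentagons. First I would rule out triangles. An interior-to-interior cut of a triangle (whose only two distinct sides are adjacent) merely trims off a corner and leaves a quadrangle, so any triangle occurring as an internal node has a triangle among its children. Hence, were a triangle to occur anywhere in the tree, finiteness would give a triangle node $T$ of maximal depth; but $T$ cannot be internal (it would have a strictly deeper triangle child) and cannot be a leaf (leaves are quadrangles), a contradiction. Thus no node is a triangle.

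Finally I would argue top-down by induction on the tree. The root is a quadrangle. If a node is a quadrangle, its cut is interior-to-interior by the deficiency step, and for a quadrangle such a cut joins either two \emph{opposite} sides, producing two quadrangles, or two \emph{adjacent} sides, trimming off a triangle and leaving a pentagon. The latter is impossible, since it would create a triangle child, contradicting the previous step; this also explains why no pentagon is ever created. Hence the cut joins opposite sides, both children are quadrangles, and the cut runs between relative inner points of opposite sides, so the induction yields the claim. The main obstacle is precisely that the deficiency count alone does not close the argument: for a quadrangle the forbidden adjacent-sides cut also satisfies $k_1+k_2=k_0+4$, so the real work is the finiteness argument excluding triangles, which is what in turn rules out the pentagon-producing cuts.
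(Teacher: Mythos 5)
Your proof is correct, but it is more elaborate than it needs to be, and it is worth seeing why: the decisive mechanism in your second step is the paper's \emph{entire} proof. The paper observes that any glass-cut of a quadrangle that does not produce two quadrangles yields a triangle child --- this covers all cases at once, including cuts through vertices (a diagonal gives two triangles; a vertex to a relative inner point of a non-adjacent side gives a triangle and a quadrangle) and the adjacent-sides cut (triangle plus pentagon) --- and that every further cut of a triangle, of whatever kind, again produces a triangle; hence a triangle would propagate to a leaf, contradicting that every leaf is an affine image of the quadrangle $D$. Your deficiency invariant $\Delta=\sum(k-4)$ is sound as stated (nonpositive increments in $\{-2,-1,0\}$, value $0$ at the root and at the final configuration, hence $0$ throughout), but it is dispensable: you invoke it only to know that a cut of a triangle is interior-to-interior before concluding it leaves a triangle child, whereas the slightly stronger elementary fact that \emph{every} cut of a triangle leaves a triangle (vertex-to-opposite-side gives two triangles, interior-to-interior trims a corner) makes the counting stage unnecessary, and your maximal-depth argument is then exactly the paper's ``finally, one leaf is a triangle.'' What your route buys is a tidy global separation of the two assertions of the lemma --- the counting establishes ``relative inner points of sides'' for all cuts before any case analysis, and it would generalize to gc-dissections of $k$-gons into $k$-gons --- at the cost of extra machinery; the paper's version is shorter because the triangle obstruction subsumes the vertex cuts and the adjacent-sides cuts in a single stroke. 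You correctly identified the one genuine trap (the adjacent-sides cut also has $k_1+k_2=k_0+4$, so counting alone cannot finish), and your handling of it is airtight.
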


\begin{proof}
Suppose in one step a quadrangle is not cut into two quadrangles. Then one of its children is a triangle. However, every further cut of a triangle produces at least one triangle. So, finally, one leaf of the tree is a triangle, a contradiction.
\end{proof}

As we know that a qc-self-affinity of a quadrangle is obtained by successively cutting parent quadrangles into two descending quadrangles, we study now how two quadrangles can be glued together along a common side to obtain a qc-dissection of a parent quadrangle.

\begin{lem}\label{lem:glueing}
All possible affine types of parent quadrangles admitting a gc-dissection into two descending quadrangles of prescribed affine types are the following.
\begin{itemize}
\item[(i)] Combinations of $Q(\alpha_1,\beta_1)$ with $Q(\alpha_2,\beta_2)$, $0 < \alpha_i < \beta_i < 1$ for $i=1,2$ (with the notation $\left(\bar{\alpha}_i,\bar{\beta}_i\right)=\frac{1-\beta_i}{(1-\alpha_i)\beta_i}(\alpha_i,\beta_i)$ as in \eqref{eq:flip}):
$$
\begin{array}{|l|l|l|}
\hline
\mbox{notation} & \mbox{(one) parametrization} & \mbox{affine}\\ 
&& \mbox{quotient}\\
\hline
\hline
&&\\[-2.3ex]
Q(\alpha_1,\beta_1) \cdot Q(\alpha_2,\beta_2) & Q(\alpha_1\alpha_2,\beta_1\beta_2) & \frac{\alpha_1}{\beta_1} \cdot \frac{\alpha_2}{\beta_2} \\
\hline
&&\\[-2.3ex]
Q(\alpha_1,\beta_1) : Q(\alpha_2,\beta_2) & \bullet \text{ if } \frac{\alpha_1}{\beta_1} < \frac{\alpha_2}{\beta_2}:\;  Q(\alpha_1\beta_2,\beta_1\alpha_2) & \frac{\alpha_1}{\beta_1}: \frac{\alpha_2}{\beta_2} \\
& \bullet \text{ if } \frac{\alpha_1}{\beta_1} = \frac{\alpha_2}{\beta_2}: \; T(\alpha_1\beta_2)=T(\beta_1\alpha_2) & 1 \\
& \bullet \text{ if } \frac{\alpha_1}{\beta_1} > \frac{\alpha_2}{\beta_2}: \; Q(\beta_1\alpha_2,\alpha_1\beta_2) & \frac{\alpha_2}{\beta_2}: \frac{\alpha_1}{\beta_1} \\
\hline
&&\\[-2.3ex]
Q(\alpha_1,\beta_1) \cdot Q(\alpha_2,\beta_2)^F & Q\left(\alpha_1\bar{\alpha}_2,\beta_1\bar{\beta}_2\right) & \frac{\alpha_1}{\beta_1} \cdot \frac{\alpha_2}{\beta_2} \\
=Q(\alpha_1,\beta_1) \cdot Q\left(\bar{\alpha}_2,\bar{\beta}_2\right) &&\\
\hline
&&\\[-2.3ex]
Q(\alpha_1,\beta_1) : Q(\alpha_2,\beta_2)^F & \bullet \text{ if } \frac{\alpha_1}{\beta_1} < \frac{\alpha_2}{\beta_2}:\;  Q\left(\alpha_1\bar{\beta}_2,\beta_1\bar{\alpha}_2\right) & \frac{\alpha_1}{\beta_1}: \frac{\alpha_2}{\beta_2} \\
=Q(\alpha_1,\beta_1) : Q\left(\bar{\alpha}_2,\bar{\beta}_2\right) 
& \bullet \text{ if } \frac{\alpha_1}{\beta_1} = \frac{\alpha_2}{\beta_2}: \; T\left(\alpha_1\bar{\beta}_2\right)=T\left(\beta_1\bar{\alpha}_2\right) & 1 \\
& \bullet \text{ if } \frac{\alpha_1}{\beta_1} > \frac{\alpha_2}{\beta_2}: \; Q\left(\beta_1\bar{\alpha}_2,\alpha_1\bar{\beta}_2\right) & \frac{\alpha_2}{\beta_2}: \frac{\alpha_1}{\beta_1} \\
\hline
&&\\[-2.3ex]
Q(\alpha_1,\beta_1)^F \cdot Q(\alpha_2,\beta_2) & Q\left(\bar{\alpha}_1\alpha_2,\bar{\beta}_1\beta_2\right) & \frac{\alpha_1}{\beta_1} \cdot \frac{\alpha_2}{\beta_2} \\
=Q\left(\bar{\alpha}_1,\bar{\beta}_1\right) \cdot Q(\alpha_2,\beta_2) &&\\
\hline
&&\\[-2.3ex]
Q(\alpha_1,\beta_1)^F : Q(\alpha_2,\beta_2) & \bullet \text{ if } \frac{\alpha_1}{\beta_1} < \frac{\alpha_2}{\beta_2}:\;  Q\left(\bar{\alpha}_1\beta_2,\bar{\beta}_1\alpha_2\right) & \frac{\alpha_1}{\beta_1}: \frac{\alpha_2}{\beta_2} \\
=Q\left(\bar{\alpha}_1,\bar{\beta}_1\right) : Q(\alpha_2,\beta_2) & \bullet \text{ if } \frac{\alpha_1}{\beta_1} = \frac{\alpha_2}{\beta_2}: \; T\left(\bar{\alpha}_1\beta_2\right)=T\left(\bar{\beta}_1\alpha_2\right) & 1 \\
& \bullet \text{ if } \frac{\alpha_1}{\beta_1} > \frac{\alpha_2}{\beta_2}: \; Q\left(\bar{\beta}_1\alpha_2,\bar{\alpha}_1\beta_2\right) & \frac{\alpha_2}{\beta_2}: \frac{\alpha_1}{\beta_1} \\
\hline
&&\\[-2.3ex]
Q(\alpha_1,\beta_1)^F \cdot Q(\alpha_2,\beta_2)^F & Q\left(\bar{\alpha}_1\bar{\alpha}_2,\bar{\beta}_1\bar{\beta}_2\right) & \frac{\alpha_1}{\beta_1} \cdot \frac{\alpha_2}{\beta_2} \\
=Q\left(\bar{\alpha}_1,\bar{\beta}_1\right) \cdot Q\left(\bar{\alpha}_2,\bar{\beta}_2\right) &&\\
\hline
&&\\[-2.3ex]
Q(\alpha_1,\beta_1)^F : Q(\alpha_2,\beta_2)^F & \bullet \text{ if } \frac{\alpha_1}{\beta_1} < \frac{\alpha_2}{\beta_2}:\;  Q\left(\bar{\alpha}_1\bar{\beta}_2,\bar{\beta}_1\bar{\alpha}_2\right) & \frac{\alpha_1}{\beta_1}: \frac{\alpha_2}{\beta_2} \\
=Q\left(\bar{\alpha}_1,\bar{\beta}_1\right) : Q\left(\bar{\alpha}_2,\bar{\beta}_2\right) 
& \bullet \text{ if } \frac{\alpha_1}{\beta_1} = \frac{\alpha_2}{\beta_2}: \; T\left(\bar{\alpha}_1\bar{\beta}_2\right)=T\left(\bar{\beta}_1\bar{\alpha}_2\right) & 1 \\
& \bullet \text{ if } \frac{\alpha_1}{\beta_1} > \frac{\alpha_2}{\beta_2}: \; Q\left(\bar{\beta}_1\bar{\alpha}_2,\bar{\alpha}_1\bar{\beta}_2\right) & \frac{\alpha_2}{\beta_2}: \frac{\alpha_1}{\beta_1} \\
\hline
\end{array}
$$

\item[(ii)] Combinations of $Q(\alpha,\beta)$, $0 < \alpha < \beta < 1$, with $T(\gamma)$, $0 < \gamma < 1$ (with the notation $\left(\bar{\alpha},\bar{\beta}\right)=\frac{1-\beta}{(1-\alpha)\beta}(\alpha,\beta)$ as in \eqref{eq:flip}):
$$
\begin{array}{|l|l|l|}
\hline
\mbox{notation} & \mbox{(one) parametrization} & \mbox{affine}\\
&& \mbox{quotient}\\
\hline
\hline
&&\\[-2.3ex]
Q(\alpha,\beta) \cdot T(\gamma) & Q(\alpha\gamma,\beta\gamma) & \frac{\alpha}{\beta} \\
\hline
&&\\[-2.3ex]
Q(\alpha,\beta)^F \cdot T(\gamma)=Q\left(\bar{\alpha},\bar{\beta}\right) \cdot T(\gamma) & Q\left(\bar{\alpha}\gamma,\bar{\beta}\gamma\right) & \frac{\alpha}{\beta} \\
\hline
\end{array}
$$

\item[(iii)] There are no possible combinations of $Q(\alpha,\beta)$, $0 < \alpha < \beta < 1$, with $P$.

\item[(iv)] Combinations of $T(\gamma_1)$ with $T(\gamma_2)$, $0 < \gamma_i < 1$ for $i=1,2$:
$$
\begin{array}{|l|l|l|}
\hline
\mbox{notation} & \mbox{possible parametrizations} & \mbox{affine}\\
&& \mbox{quotient}\\
\hline
\hline
&&\\[-2.3ex]
T(\gamma_1) \cdot T(\gamma_2) & T(\gamma_1\gamma_2) & 1\\
\hline
&&\\[-2.3ex]
T(\gamma_1)^F \cdot T(\gamma_2)^F & \bullet \text{ if } \gamma_1 \ne \gamma_2:\, \big\{T(\gamma): \min\{\gamma_1,\gamma_2\} < \gamma < 1 \big\} \cup \{P\} & 1\\
& \bullet \text{ if } \gamma_1= \gamma_2:\, \big\{T(\gamma): \min\{\gamma_1,\gamma_2\} \le \gamma < 1 \big\} \cup \{P\} & 1\\
\hline
\end{array}
$$

\item[(v)] Combinations of $T(\gamma_0)$, $0 < \gamma_0 < 1$, with $P$:
$$
\begin{array}{|l|l|l|}
\hline
\mbox{notation} & \mbox{possible parametrizations} & \mbox{affine quotient}\\
\hline
\hline
&&\\[-2.3ex]
T(\gamma_0)^F \cdot P & \big\{T(\gamma): \gamma_0 < \gamma < 1 \big\} & 1\\
\hline
\end{array}
$$

\item[(vi)] The only possible combination of $P$ with $P$ is $P$ and has the affine quotient $1$.
\end{itemize}
\end{lem}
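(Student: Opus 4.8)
The plan is to read the lemma as the inverse of a single glass-cut and to compute, for each admissible gluing, the affine type of the parent directly in the parametrization of Section~\ref{sec:parametrization}. By Lemma~\ref{lem:tree} a parent quadrangle is obtained from two pieces by identifying a full side of one with a full side of the other, in such a way that the endpoints of this common side fall into the relative interiors of two \emph{opposite} sides of the parent and the two outer edges meeting at each such endpoint are collinear; otherwise the union would pick up extra vertices or a reflex angle. First I would fix this combinatorial picture and then place everything in the coordinates of Figure~\ref{fig:parameter1}, so that the relevant apex $s$ (or $t$) of the parent sits at a convenient finite point, or at infinity in the trapezoidal and parallelogram cases.

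The structural heart of the proof is the observation that, since the common side \emph{is} the cut, it is transversal to exactly one of the two pencils of lines through an apex of the parent; with respect to that pencil \emph{both} pieces necessarily share the parent's apex. Consequently each piece enters through one of its two apices — this is precisely the choice recorded by the flip ${}^F$ and formula \eqref{eq:flip} — and, once the apices are matched, there remain exactly two ways to align the two radial directions of the pieces: the direct alignment, in which the $\alpha$-direction of one piece continues the $\alpha$-direction of the other (the composition $\cdot$), and the crossed alignment obtained by reflecting one piece, in which the $\alpha$-direction of one meets the $\beta$-direction of the other (the composition $:$). I would prove that these are the only admissible configurations and that no further freedom exists when both apices are finite.

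The computations are then routine similar-triangle and segment-ratio evaluations along the common pencil. For the direct alignment the two contractions toward the shared apex compose, giving $Q(\alpha_1\alpha_2,\beta_1\beta_2)$ and the product of affine quotients. For the crossed alignment the ratios pair oppositely, giving $Q(\alpha_1\beta_2,\beta_1\alpha_2)$; reading off which of $\alpha_1\beta_2$, $\beta_1\alpha_2$ is the smaller parameter produces the three sub-cases governed by the comparison of $\frac{\alpha_1}{\beta_1}$ and $\frac{\alpha_2}{\beta_2}$, the borderline $\frac{\alpha_1}{\beta_1}=\frac{\alpha_2}{\beta_2}$ forcing equal parameters and hence a trapezoid. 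The same two modes, specialised by letting one or both apices tend to infinity, yield the $Q$--$T$, $T$--$T$, $T$--$P$ and $P$--$P$ entries; in particular the direct mode of two trapezoids is automatically a trapezoid $T(\gamma_1\gamma_2)$.

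The step I expect to be the main obstacle is exactly the degenerate cases in which a shared apex lies at infinity, i.e. the flipped trapezoid and parallelogram gluings of items (iv) and (v). Here the pencil through the apex becomes a family of parallel lines, and matching two such pieces along the common side no longer rigidly fixes the configuration: a translation along the gluing direction is free, so the outcome is a whole one-parameter family of trapezoids rather than a single type. The delicate work is to determine the exact range of the resulting parameter $\gamma$ — bounded below by $\min\{\gamma_1,\gamma_2\}$, respectively $\gamma_0$, and above by $1$ — to decide for which limiting positions a parallelogram is actually attained (hence the $\cup\{P\}$ in (iv) but not in (v)), and to settle whether the lower endpoint is included, which hinges on whether $\gamma_1=\gamma_2$. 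The absence of separate $:$-rows and of certain flip-combinations elsewhere I would explain structurally rather than by computation: a trapezoid's two ratios toward its finite apex are equal, so its direct and crossed gluings coincide and collapse to one row, while a finite apex can never be matched with an apex at infinity, which forbids the mixed flip-combinations and, for the parallelogram, forbids any gluing with a genuine quadrangle at all, giving (iii). Throughout, one must still verify that each retained configuration yields a genuinely \emph{convex} quadrangle.
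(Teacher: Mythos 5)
Your proposal is correct and takes essentially the same route as the paper: the paper's proof observes that a closing side must be glued to an opening side (forcing $s_1=s_2$), that exactly two vertex matchings exist (giving $\cdot$ and $:$), that flips record over which apex a piece is parametrized, and that parameters multiply as segment ratios along the rays through the shared apex --- which is precisely your shared-apex/pencil picture, with your one-parameter freedom for parallel-side gluings matching the paper's $\lambda$-computation in (iv) and (v). Your ``apex at infinity'' framing is only a cosmetic projective repackaging of the paper's opening/closing/constant-side case analysis.
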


\begin{proof}
(i). Assume that, as in Figure~\ref{fig:parameter1}, $Q(\alpha_i,\beta_i)$ is realized as the quadrangle $a_ib_ic_id_i$ with closing sides $b_ic_i$ and $c_id_i$ and with the intersection $s_i$ of $l(a_ib_i)$ with $l(c_id_i)$, such that the parameters (over $b_ic_i$) are
$$
\alpha_i=\frac{|b_is_i|}{|a_is_i|} \quad\text{ and }\quad \beta_i=\frac{|c_is_i|}{|d_is_i|}.
$$
When both $Q(\alpha_1,\beta_1)$ and $Q(\alpha_2,\beta_2)$ together form a gc-dissection of a third quadrangle, a closing side of one quadrangle must be glued together with an opening side of the other one. 

First, suppose that $b_1 c_1$ is glued with $a_2d_2$. Then necessarily $s_1=s_2=:s$. There are still two possibilities, namely either $b_1=a_2$ and $c_1=d_2$ or $b_1=d_2$ and $c_1=a_2$, see Figure~\ref{fig:glue1}. 
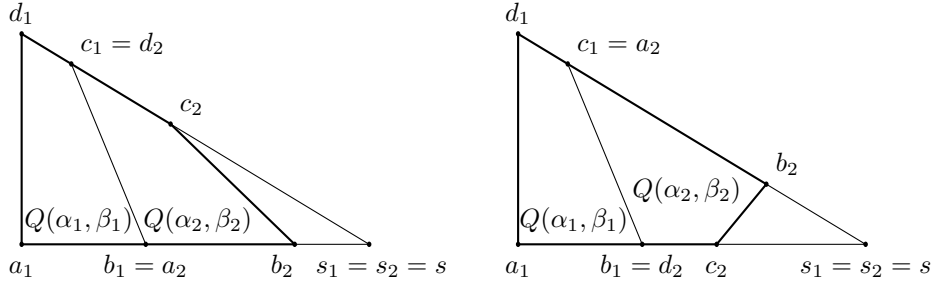
\begin{figure}
\begin{center}
\begin{tikzpicture}[xscale=.33,yscale=.4]

\fill
  (0,0) circle (.09) 
	(0,-1.4) node[above] {$a_1$}
	(5,0) circle (.09)
	(5,-1.4) node[above] {$b_1=a_2$}
	(11,0) circle (.09) 
	(10.5,-1.4) node[above] {$b_2$}
	(14,0) circle (.09) 
	(14.5,-1.4) node[above] {$s_1=s_2=s$}
	(0,7) circle (.09) node[above] {$d_1$}
	(2,6) circle (.09) node[above right] {$c_1=d_2$}
	(6,4) circle (.09) node[above right] {$c_2$}
	(2.3,.8) node {$Q(\alpha_1,\beta_1)$}
	(7.1,.8) node {$Q(\alpha_2,\beta_2)$}
  (20,0) circle (.09) 
	(20,-1.4) node[above] {$a_1$}
	(25,0) circle (.09)
	(25,-1.4) node[above] {$b_1=d_2$}
	(28,0) circle (.09) 
	(28,-1.4) node[above] {$c_2$}
	(34,0) circle (.09) 
	(34,-1.4) node[above] {$s_1=s_2=s$}
	(20,7) circle (.09) node[above] {$d_1$}
	(22,6) circle (.09) node[above right] {$c_1=a_2$}
	(30,2) circle (.09) node[above right] {$b_2$}
	(22.25,0.8) node {$Q(\alpha_1,\beta_1)$}
	(26.8,1.8) node {$Q(\alpha_2,\beta_2)$}
	;

\draw[thick]
  (0,0)--(11,0)--(6,4)--(0,7)--cycle
  (20,0)--(28,0)--(30,2)--(20,7)--cycle
  ;
  
\draw
  (11,0)--(14,0)--(6,4)
	(5,0)--(2,6)
  (28,0)--(34,0)--(30,2)
	(25,0)--(22,6)
  ;

\end{tikzpicture}
\end{center}
\caption{$Q(\alpha_1,\beta_1) \cdot Q(\alpha_2,\beta_2)$ and $Q(\alpha_1,\beta_1) : Q(\alpha_2,\beta_2)$.\label{fig:glue1}}
\end{figure}
In the former situation we obtain a quadrangle $Q(\alpha_1,\beta_1) \cdot Q(\alpha_2,\beta_2)$ with vertices $a_1b_2c_2d_1$ and parameters
$$
0 < \alpha=\frac{|b_2s|}{|a_1s|}= \frac{|b_1s_1|}{|a_1s_1|} \cdot \frac{|b_2s_2|}{|a_2s_2|}= \alpha_1\alpha_2 < \beta=\frac{|c_2s|}{|d_1s|}= \frac{|c_1s_1|}{|d_1s_1|} \cdot \frac{|c_2s_2|}{|d_2s_2|}= \beta_1\beta_2 < 1,
$$
which yields the first line in the table of (i). In the latter situation we obtain a quadrangle $Q(\alpha_1,\beta_1) : Q(\alpha_2,\beta_2)$ with vertices $a_1c_2b_2d_1$. The parameters are computed by
$$
0< \frac{|c_2s|}{|a_1s|}= \frac{|b_1s_1|}{|a_1s_1|} \cdot \frac{|c_2s_2|}{|d_2s_2|}= \alpha_1\beta_2 < 1 \;\text{ and }\;
0 < \frac{|b_2s|}{|d_1s|}= \frac{|c_1s_1|}{|d_1s_1|} \cdot \frac{|b_2s_2|}{|a_2s_2|}= \beta_1\alpha_2 < 1.
$$
Depending on whether $\alpha_1\beta_2 \begin{array}{c} < \\[-1.6ex] = \\[-1.6ex] > \end{array}
\alpha_2\beta_1$ we obtain the parametrization and the affine quotient as in the second line in the table of (i).
 
Note that the resulting affine types are exactly the same if $a_1d_1$ is glued together with $b_2c_2$. So we obtain 
\begin{equation}\label{eq:commutative}
\begin{array}{lcr}
Q(\alpha_1,\beta_1)\cdot Q(\alpha_2,\beta_2)&=&Q(\alpha_2,\beta_2)\cdot Q(\alpha_1,\beta_1),\\ 
Q(\alpha_1,\beta_1): Q(\alpha_2,\beta_2)&=&Q(\alpha_2,\beta_2): Q(\alpha_1,\beta_1).
\end{array}
\end{equation}

The remainder of (i) is obtained by glueing $b_1c_1$ with $a_2b_2$ (or, equivalently, $a_1b_1$ with $b_2c_2$), $c_1 d_1$ with $a_2 d_2$ (or $a_1d_1$ with $c_2d_2$), and $c_1d_1$ with $a_2b_2$ (or $a_1b_1$ with $c_2d_2$). The distinction of cases and the resulting affine quotient can be formulated in terms of $\frac{\alpha_i}{\beta_i}$, since $\frac{\bar{\beta}_i}{\bar{\alpha}_i}=\frac{\alpha_i}{\beta_i}$.

(ii). Here the arguments are as in (i). The situation simplifies, because $T(\gamma)$ has only one parameter and only one opening and one closing side.

(iii). $Q(\alpha,\beta)$ and $P$ cannot be combined, since opposite sides of $P$ are parallel, whereas opposite sides of $Q(\alpha,\beta)$ are not.

(iv). The first line of the table of (iv), i.e. the quadrangle $T(\gamma_1) \cdot T(\gamma_2)$, is obtained by glueing the closing side of $T(\gamma_1)$ together with the opening side of $T(\gamma_2)$ or vice versa.

Alternatively, $T(\gamma_1)$ and $T(\gamma_2)$ can be glued along constant sides, see Figure~\ref{fig:glue2}. This type of glueing is called $T(\gamma_1)^F \cdot T(\gamma_2)^F$.
\begin{figure}
\begin{center}
\begin{tikzpicture}[scale=.5]

\draw[thick]
  (0,0)--(9,0)--(6,3)--(0,3)--cycle
  (13,0)--(20,0)--(21,3)--(13,3)--cycle
  ;
  
\draw
  (4,0)--(3,3)
	(17,0)--(16,3)
  (1.75,1.5) node {$T(\gamma_1)$}
  (5.5,1.5) node {$T(\gamma_2)$}
  (14.75,1.5) node {$T(\gamma_1)$}
  (18.5,1.5) node {$T(\gamma_2)$}
  (2,0) node[below] {$1$}
  (1.5,3) node[above] {$\gamma_1$}
  (6.5,0) node[below] {$\lambda$}
  (4.5,3) node[above] {$\lambda\gamma_2$}
  (15,0) node[below] {$1$}
  (14.5,3) node[above] {$\gamma_1$}
  (18.5,0) node[below] {$\lambda\gamma_2$}
  (18.5,3) node[above] {$\lambda$}
	;

\end{tikzpicture}
\end{center}
\caption{Combining $T(\gamma_1)$ with $T(\gamma_2)$ along constant sides.\label{fig:glue2}}
\end{figure}
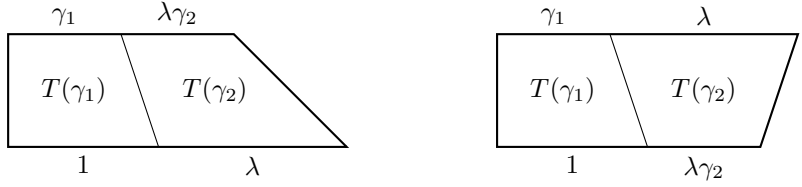
We can assume that, w.l.o.g., $\gamma_1 \le \gamma_2$ and the parallel sides of $T(\gamma_1)$ have lengths $1$ and $\gamma_1$. Then the lengths of the parallel sides of the combined trapezoids are either $\gamma_1 + \lambda \gamma_2$ and $1+\lambda$ or $\gamma_1+\lambda$ and $1+\lambda\gamma_2$ with arbitrary $\lambda > 0$. Their respective ratios are
$$
\frac{\gamma_1+\lambda\gamma_2}{1+\lambda}=\frac{1}{1+\lambda} \gamma_1+ \frac{\lambda}{1+\lambda} \gamma_2,
$$
which is $\gamma_1$ if $\gamma_1=\gamma_2$ or ranges in the interval $(\gamma_1,\gamma_2)$ if $\gamma_1 < \gamma_2$,
and
$$
\frac{\gamma_1+\lambda}{1+\lambda\gamma_2}=\frac{1}{1+\lambda\gamma_2} \gamma_1+ \frac{\lambda\gamma_2}{1+\lambda\gamma_2} \frac{1}{\gamma_2},
$$
which ranges in $\left(\gamma_1,\frac{1}{\gamma_2}\right) \supseteq (\gamma_1,1]$. Consequently, the affine types obtained by $T(\gamma_1)^F \cdot T(\gamma_2)^F$ are $T(\gamma)$, $\gamma_1 \le \gamma< 1$, and $P$ if $\gamma_1=\gamma_2$, and $T(\gamma)$, $\gamma_1 < \gamma< 1$, and $P$ if $\gamma_1<\gamma_2$.

(v). This is obtained as (iv). Only glueing along constant sides is possible.

(vi). This is trivial.
\end{proof}

\begin{rem}
\begin{itemize}
\item[(i)] 
The multiplicative operational notation comes with a multiplication of the parameters as well as of the affine quotients. The divisional notation corresponds to a division of the affine quotients and to an exchange of the parameters of one quadrangle.

\item[(ii)]
We do not use a flip in our operational notation if the respective glueing is made along the closing side over which the parametrization is made (or along its opposite opening side). We use a flip if the glueing is made along the neighbouring closing (or opening) side in case of a non-trapezoid or along a constant side in case of a trapezoid.

\item[(iii)]
The operational notations $T(\gamma_1)^F \cdot T(\gamma_2)^F$ and $T(\gamma_0)^F \cdot P$ stand for infinitely many possible results, whereas the other notations produce a unique outcome. The notations are commutative (see \eqref{eq:commutative} in the proof of (i)).

\item[(iv)]
Lemma~\ref{lem:glueing} allows to classify all $2$-gc-self-affine quadrangles (and all $2$-self-affine convex quadrangles, since this is equivalent). Indeed, every trapezoid is $2$-gc-self-affine, as follows from Lemma~\ref{lem:glueing}(iv) and (vi) (but in fact is trivial, see Figure~\ref{fig:1}). On the other hand, non-trapezoids are not $2$-gc-self-affine, because Lemma~\ref{lem:glueing}(i) shows that a combination of two copies of $Q(\alpha,\beta)$, $0 < \alpha < \beta < 1$, has an affine quotient $\big(\frac{\alpha}{\beta}\big)^2$ or $1$, which is different from the quotient $\frac{\alpha}{\beta}$ of $Q(\alpha,\beta)$.
\end{itemize}
\end{rem}


\section{$n$-gc-self-affine quadrangles with even $n$ are trapezoids\label{sec:trapezoids}}

\begin{lem}\label{lem:parity}
Suppose that a quadrangle $Q$ has a gc-dissection into $n$ copies of $Q(\alpha,\beta)$, $0 < \alpha < \beta < 1$. Then the affine quotient of $Q$ is of the form $\big(\frac{\alpha}{\beta}\big)^k$ with $k \in \{0,1,\ldots,n\}$, where $k$ is even if $n$ is even and $k$ is odd if $n$ is odd.
\end{lem}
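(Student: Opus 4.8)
The plan is to analyse the dissection through its dissection tree and to push a single numerical invariant, the \emph{affine quotient}, up the tree. Fix $q=\frac{\alpha}{\beta}\in(0,1)$, the affine quotient of $Q(\alpha,\beta)$. By Lemma~\ref{lem:tree} every vertex of the dissection tree is a quadrangle, and each internal vertex arises by gluing its two children according to one of the operations catalogued in Lemma~\ref{lem:glueing}. For a vertex $v$ write $\ell(v)$ for the number of leaves below $v$; these leaves are exactly the copies of $Q(\alpha,\beta)$ forming the sub-quadrangle at $v$, so $\ell(\mathrm{root})=n$. I would prove, by structural induction from the leaves upward, the invariant: the affine quotient at $v$ equals $q^{k(v)}$ for an integer $k(v)$ with $0\le k(v)\le \ell(v)$ and $k(v)\equiv \ell(v)\pmod 2$. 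Since $q\in(0,1)$, the powers $q^{0},q^{1},q^{2},\dots$ are pairwise distinct, so once the affine quotient is known to be a power of $q$, the exponent $k(v)$ is uniquely determined.

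For the base case each leaf is a copy of $Q(\alpha,\beta)$ with affine quotient $q=q^{1}$, and $\ell(v)=1=k(v)$, so the invariant holds. For the inductive step take an internal vertex $v$ with children $v_1,v_2$ of affine quotients $q^{k_1}$ and $q^{k_2}$. Reading off the affine-quotient column of Lemma~\ref{lem:glueing}, which expresses the result's affine quotient in terms of the children's affine quotients, every actually occurring gluing is of one of two kinds. \emph{Multiplicative} gluings (all ``$\cdot$'' rows of (i), both rows of (ii), and all of (iv)--(vi)) multiply the affine quotients, so the result has affine quotient $q^{k_1}q^{k_2}=q^{k_1+k_2}$ and $k(v)=k_1+k_2$; here a trapezoid or parallelogram child has affine quotient $1=q^{0}$, i.e. $k_i=0$, so those cases are subsumed. \emph{Divisional} gluings (the ``$:$'' rows of (i), which by the table are only possible between two non-trapezoids) divide the larger affine quotient into the smaller; because $q\in(0,1)$ the smaller is $q^{\max(k_1,k_2)}$ and the larger is $q^{\min(k_1,k_2)}$, whence the result has affine quotient $q^{|k_1-k_2|}$ and $k(v)=|k_1-k_2|$ (the subcase $k_1=k_2$ giving a trapezoid with $k(v)=0$). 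In both kinds the affine quotient is again a power of $q$, so the well-definedness of the invariant propagates, and (iii) contributes no vertices since it is impossible.

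It remains to verify the two numerical constraints. As $\ell(v)=\ell_1+\ell_2$ and, by induction, $k_i\equiv \ell_i\pmod 2$, the multiplicative case gives $k(v)=k_1+k_2\equiv \ell(v)$, while the divisional case gives $k(v)=|k_1-k_2|\equiv k_1+k_2\equiv \ell(v)\pmod 2$, since $|k_1-k_2|$ and $k_1+k_2$ always have equal parity; thus parity is preserved. For the range, $k(v)=k_1+k_2\le \ell_1+\ell_2=\ell(v)$ in the multiplicative case and $k(v)=|k_1-k_2|\le \max(k_1,k_2)\le \ell_1+\ell_2=\ell(v)$ in the divisional case, while $k(v)\ge 0$ is clear. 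Applying the invariant at the root, where $\ell(\mathrm{root})=n$, gives that the affine quotient of $Q$ is $q^{k}=\bigl(\frac{\alpha}{\beta}\bigr)^{k}$ with $0\le k\le n$ and $k\equiv n\pmod 2$, which is the claim. The heart of the argument — and the only place demanding care — is the verification that \emph{every} entry of Lemma~\ref{lem:glueing} acts either multiplicatively or divisionally on affine quotients and that both operations preserve the parity of the exponent; the hypothesis $\alpha<\beta$, i.e. $q\in(0,1)$, enters exactly once, to decide which factor is larger in the divisional rows.
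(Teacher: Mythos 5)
Your proof is correct and takes essentially the same route as the paper: an induction over the dissection tree in which Lemma~\ref{lem:glueing} forces the exponent at each internal vertex to be $k_1+k_2$ (multiplicative gluings) or $|k_1-k_2|$ (divisional gluings), both of which preserve the parity constraint $k\equiv n \pmod 2$ and the bound $k\le n$. Your extra care — checking row by row which gluings are multiplicative versus divisional, noting that divisional gluings occur only between non-trapezoids, and observing that $q\in(0,1)$ makes the exponent unique — merely spells out what the paper's shorter proof leaves implicit.
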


\begin{proof}
This is obtained by mathematical induction over the number $n$ of tiles (or of leafs in the dissection tree). The claim is trivial for the trivial dissection ($n=1$). In case of a finer dissection $Q$ is split into two quadrangles $Q^*$ and $Q^{**}$ who themselves are dissected into $n_1$ and $n_2$ tiles with $n_1+n_2=n$ and $0 < n_1,n_2 < n$. By the induction hypothesis, their affine quotients are $\big(\frac{\alpha}{\beta}\big)^{k_1}$ and $\big(\frac{\alpha}{\beta}\big)^{k_2}$ with $k_i \in \{0,1,\ldots,n_i\}$ and $k_i \equiv n_i \mod 2$, $i=1,2$. Now Lemma~\ref{lem:glueing} shows that $Q=Q^* \cup Q^{**}$ has an affine quotient $\big(\frac{\alpha}{\beta}\big)^{k}$ with $k\in \{k_1+k_2,|k_1-k_2|\}$. Accordingly, $k \in \{0,1,\ldots,k_1+k_2\}\subseteq\{0,1,\ldots,n_1+n_2\}$ and $k\equiv k_1+k_2 \equiv n_1+n_2=n \mod 2$.
\end{proof}

\begin{proof}[Proof of Theorem~\ref{thm:main}(iii)]
It is clear that every trapezoid is $n$-gc-self-affine by repeated application of Lemma~\ref{lem:glueing}(iv) and (vi), see also Figure~\ref{fig:1}. On the other hand, a non-trapezoid $Q(\alpha,\beta)$, $0 < \alpha < \beta < 1$, cannot be gc-dissected into an even number $n$ of copies of $Q(\alpha,\beta)$, because then Lemma~\ref{lem:parity} implied that $Q(\alpha,\beta)$ had an affine quotient $\big(\frac{\alpha}{\beta}\big)^k$ with even $k$, which is different from the actual quotient $\big(\frac{\alpha}{\beta}\big)^1$.
\end{proof}


\section{$n$-gc-self-affine quadrangles for odd $n \ge 5$\label{sec:n>=5}}

\begin{lem}\label{lem:trapezoids}
Let $0 < \alpha < \beta < 1$.
\begin{itemize}
\item[(i)]
$T(\alpha\beta)$ has a gc-dissection into two copies of $Q(\alpha,\beta)$.

\item[(ii)]
For every real number $\gamma$ with $\alpha\beta\min\left\{\frac{1-\beta}{(1-\alpha)\beta},1\right\}\le \gamma < 1$ and every even integer $k \ge 4$, $T(\gamma)$ has a gc-dissection into $k$ copies of $Q(\alpha,\beta)$.
\end{itemize}
\end{lem}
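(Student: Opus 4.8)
For part (i), the plan is to produce the dissection explicitly from the machinery already built in Lemma~\ref{lem:glueing}. Looking at the divisional combination $Q(\alpha,\beta) : Q(\alpha,\beta)$ in the table of Lemma~\ref{lem:glueing}(i), the two factors have the same affine quotient $\frac{\alpha}{\beta}=\frac{\alpha}{\beta}$, so we land in the middle case $\frac{\alpha_1}{\beta_1}=\frac{\alpha_2}{\beta_2}$, which yields precisely $T(\alpha_1\beta_2)=T(\beta_1\alpha_2)$. Setting $\alpha_1=\alpha_2=\alpha$ and $\beta_1=\beta_2=\beta$ gives $T(\alpha\beta)$. Thus $T(\alpha\beta)$ arises as a composition of two copies of $Q(\alpha,\beta)$ glued along a closing/opening side pair, exactly as required. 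The only thing to verify is that this combination is geometrically realizable, i.e. that $\alpha\beta\in(0,1)$ so that $T(\alpha\beta)$ is a genuine (non-degenerate) trapezoid; this is immediate from $0<\alpha<\beta<1$.

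For part (ii), the strategy is induction on the even integer $k$, using part (i) as the base-case building block together with the trapezoid-gluing operations of Lemma~\ref{lem:glueing}(iv) and (v). First I would record the base case $k=4$: take two copies of the trapezoid $T(\alpha\beta)$ from part (i) (each itself built from two copies of $Q(\alpha,\beta)$, accounting for the four tiles) and glue them along constant sides via the operation $T(\alpha\beta)^F\cdot T(\alpha\beta)^F$. By Lemma~\ref{lem:glueing}(iv), since the two trapezoids have equal parameter $\gamma_1=\gamma_2=\alpha\beta$, the attainable outcomes are exactly $\{T(\gamma):\alpha\beta\le\gamma<1\}\cup\{P\}$. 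This already realizes every $T(\gamma)$ with $\alpha\beta\le\gamma<1$ using four copies of $Q(\alpha,\beta)$. The slightly larger range allowed in the statement, namely $\gamma\ge\alpha\beta\min\{\frac{1-\beta}{(1-\alpha)\beta},1\}$, is obtained by using the flip: the quantity $\alpha\beta\cdot\frac{1-\beta}{(1-\alpha)\beta}=\alpha\bar\beta$ is the parameter one gets from $T(\alpha\bar\beta)$, which is realized by the divisional combination $Q(\alpha,\beta):Q(\alpha,\beta)^F$ (again the equal-quotient middle case of Lemma~\ref{lem:glueing}(i), now with a flip). Starting the $T^F\cdot T^F$ construction from the smaller of the two base parameters $\alpha\beta$ and $\alpha\bar\beta$ pushes the reachable lower endpoint down to $\alpha\beta\min\{\frac{1-\beta}{(1-\alpha)\beta},1\}$, matching the stated bound.

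For the inductive step from $k$ to $k+2$, I would take a dissection of $T(\gamma')$ into $k$ copies of $Q(\alpha,\beta)$ (for a suitable auxiliary $\gamma'$ in the allowed range) and glue on one more copy of the base trapezoid $T(\alpha\beta)$, which carries two additional tiles. Since gluing a trapezoid to a trapezoid along constant sides sweeps out a whole interval of ratios by varying the free scaling parameter $\lambda>0$ (as computed in the proof of Lemma~\ref{lem:glueing}(iv)), and since $\frac{\gamma'+\lambda}{1+\lambda\gamma''}$ ranges over an interval containing $(\min\{\gamma',\gamma''\},1]$, I can arrange the target $\gamma$ to lie in this interval. The main obstacle will be bookkeeping the attainable ranges carefully: one must confirm that as $\lambda$ varies the union of reachable parameters covers the entire interval $[\,\alpha\beta\min\{\frac{1-\beta}{(1-\alpha)\beta},1\},\,1)$ without gaps, and that the flip option $Q(\alpha,\beta)^F$ genuinely lowers the endpoint rather than merely reshuffling it. Once the interval arithmetic for $\frac{1}{1+\lambda}\gamma_1+\frac{\lambda}{1+\lambda}\gamma_2$ and $\frac{\gamma_1+\lambda}{1+\lambda\gamma_2}$ is pinned down, the induction closes and every even $k\ge4$ is handled uniformly.
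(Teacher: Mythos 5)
Your part (i) is exactly the paper's proof: the equal-quotient middle case of $Q(\alpha,\beta):Q(\alpha,\beta)$ in Lemma~\ref{lem:glueing}(i). For part (ii) you take a genuinely different route. The paper first chains $\frac{k}{2}-1$ copies of $T(\alpha\beta)$ via the first part of Lemma~\ref{lem:glueing}(iv) into $T\big((\alpha\beta)^{\frac{k}{2}-1}\big)$, built from $k-2$ tiles, and then performs a \emph{single} constant-side glueing $T\big((\alpha\beta)^{\frac{k}{2}-1}\big)^F\cdot T(\alpha\beta)^F$, which reaches every $T(\gamma)$ with $\alpha\beta\le\gamma<1$ (closed at $\alpha\beta$ for $k=4$ by the equal-parameter case, and for $k\ge 6$ because $(\alpha\beta)^{\frac{k}{2}-1}<\alpha\beta$ pushes the open endpoint strictly below $\alpha\beta$). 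The factor $\min\left\{\frac{1-\beta}{(1-\alpha)\beta},1\right\}$ is then obtained not by a new construction but by rerunning the whole argument with the flipped parametrization $Q\left(\bar{\alpha},\bar{\beta}\right)$, using $\bar{\alpha}\bar{\beta}=\big(\frac{1-\beta}{(1-\alpha)\beta}\big)^2\alpha\beta\le\frac{1-\beta}{(1-\alpha)\beta}\,\alpha\beta$. You instead manufacture a second two-tile block $T\left(\alpha\bar{\beta}\right)$ directly as $Q(\alpha,\beta):Q(\alpha,\beta)^F$ --- legitimate, since flipping preserves the affine quotient, so the middle case of the corresponding row of Lemma~\ref{lem:glueing}(i) applies, and indeed $\alpha\bar{\beta}=\alpha\beta\cdot\frac{1-\beta}{(1-\alpha)\beta}$ --- and then run a $+2$ induction gluing on two-tile blocks. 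Both arguments rest on the same table entries; your symmetric use of the two blocks $T(\alpha\beta)$ and $T\left(\alpha\bar{\beta}\right)$ avoids the paper's case distinction on whether $\frac{1-\beta}{(1-\alpha)\beta}\gtrless 1$, at the cost of an induction, while the paper's chain avoids induction bookkeeping entirely.

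One detail in your inductive step needs repair, precisely at the point you flagged. Lemma~\ref{lem:glueing}(iv) gives the closed range $\{T(\gamma):\gamma_m\le\gamma<1\}$ only when the two glued trapezoids have \emph{equal} parameters; for unequal parameters the attainable interval is open at $\min\{\gamma_1,\gamma_2\}$. So if $\gamma_m=\alpha\bar{\beta}<\alpha\beta$ and you literally ``glue on one more copy of the base trapezoid $T(\alpha\beta)$'', the endpoint $\gamma=\gamma_m$ is unreachable in the step. The fix is immediate and dissolves the interval-union bookkeeping you were worried about: set $\gamma_m=\min\left\{\alpha\beta,\alpha\bar{\beta}\right\}=\alpha\beta\min\left\{\frac{1-\beta}{(1-\alpha)\beta},1\right\}$ and, in every inductive step, glue $T(\gamma_m)$ (realized with $k$ tiles by the induction hypothesis) with the two-tile block $T(\gamma_m)$ via $T(\gamma_m)^F\cdot T(\gamma_m)^F$; the equal-parameter case then yields all of $[\gamma_m,1)$ with $k+2$ tiles in one stroke, with no sweeping over $\lambda$-intervals or gap analysis required.
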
 

\begin{proof}
(i). This comes from $Q(\alpha,\beta):Q(\alpha,\beta)$ in Lemma~\ref{lem:glueing}(i).

(ii). First suppose that $\frac{1-\beta}{(1-\alpha)\beta} \ge 1$. By (i), we can produce $\frac{k}{2}$ copies of $T(\alpha\beta)$, each composed of two copies of $Q(\alpha,\beta)$. Repeated use of the first part of Lemma~\ref{lem:glueing}(iv) shows that $T\left((\alpha\beta)^{\frac{k}{2}-1}\right)$ can be obtained from $\frac{k}{2}-1$ copies of $T(\alpha\beta)$, that is, from $k-2$ copies of $Q(\alpha,\beta)$. Finally, we use the second part of Lemma~\ref{lem:glueing}(iv) for writing $T(\gamma)$ with $\alpha\beta \le \gamma < 1$ as $T\left((\alpha\beta)^{\frac{k}{2}-1}\right)^F\cdot T(\alpha\beta)^F$, this way obtaining a gc-dissection into $(k-2)+2=k$ copies of $Q(\alpha,\beta)$.

If $\frac{1-\beta}{(1-\alpha)\beta} < 1$, we recall that $Q(\alpha,\beta)$ represents the same quadrangles as 
$$
Q(\alpha,\beta)^F=Q\left(\frac{1-\beta}{(1-\alpha)\beta}(\alpha,\beta)\right)=Q\left(\bar{\alpha},\bar{\beta}\right),
$$ 
where $0 < \bar{\alpha} < \bar{\beta} < 1$ and $\frac{1-\bar{\beta}}{(1-\bar{\alpha})\bar{\beta}}=\left(\frac{1-\beta}{(1-\alpha)\beta}\right)^{-1} < 1$.
As above, we see that $T(\gamma)$ has a gc-dissection into $k$ copies of $Q\left(\bar{\alpha},\bar{\beta}\right)$ whenever $\gamma$ is in the interval 
$$
\left[\bar{\alpha}\bar{\beta},1\right)= \left[\left(\frac{1-\beta}{(1-\alpha)\beta}\right)^2\alpha\beta,1\right) \supseteq \left[\frac{1-\beta}{(1-\alpha)\beta}\alpha\beta,1\right).
$$
\end{proof}

In the sequel affine images of (convex) kites turn out to be crucial. We call them \emph{affine kites} for short. Of course, a trapezoid is an affine kite if and only if it is a parallelogram.

\begin{lem}\label{lem:kites}
Let $0 < \alpha < \beta < 1$. The following are equivalent.
\begin{itemize}
\item[(i)]
$Q(\alpha,\beta)$ is an affine kite.
\item[(ii)]
$\frac{1-\beta}{(1-\alpha)\beta}=1$ (or, equivalently, $\alpha=2-\frac{1}{\beta}$ or $\beta=\frac{1}{2-\alpha}$).
\item[(iii)]
The parametrization $Q(\alpha,\beta)$ is invariant under flipping: $Q(\alpha,\beta)^F=Q(\alpha,\beta)$.
\end{itemize}
In particular, every affine kite has a unique parametrization.
\end{lem}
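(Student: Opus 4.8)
The plan is to run the proof through the cycle of equivalences, settling the purely algebraic equivalence (ii)$\Leftrightarrow$(iii) first and then tying both to the geometric condition (i) via the oblique-reflection symmetry that characterizes an affine kite. For (ii)$\Leftrightarrow$(iii) I would abbreviate $\lambda:=\frac{1-\beta}{(1-\alpha)\beta}$, so that \eqref{eq:flip} reads $(\bar\alpha,\bar\beta)=\lambda(\alpha,\beta)$. Since $\alpha,\beta>0$, the equality $(\bar\alpha,\bar\beta)=(\alpha,\beta)$ holds exactly when $\lambda=1$. As each non-trapezoidal convex quadrangle is described by a unique $Q(\cdot,\cdot)$ with $0<\cdot<\cdot<1$, the two parametrizations $Q(\alpha,\beta)^F=Q(\bar\alpha,\bar\beta)$ and $Q(\alpha,\beta)$ coincide precisely when $(\bar\alpha,\bar\beta)=(\alpha,\beta)$. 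This yields (iii)$\Leftrightarrow\lambda=1\Leftrightarrow$(ii); the reformulations $\alpha=2-\frac{1}{\beta}$ and $\beta=\frac{1}{2-\alpha}$ come from clearing denominators in $\lambda=1$.

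For (ii)$\Rightarrow$(i) I would place $abcd$ with $a=(0,0)$, $s=(1,0)$, $t=(0,1)$ as in Figure~\ref{fig:parameter1}, so that $b=(1-\alpha,0)$, $c=(1-\beta,1-\bar\beta)$ and $d=(0,1-\bar\alpha)$. Under (ii) one has $\bar\alpha=\alpha$ and $\bar\beta=\beta$, hence $c=(1-\beta,1-\beta)$ and $d=(0,1-\alpha)$. The Euclidean reflection in the line $\{x=y\}$ then fixes $a$ and $c$ and interchanges $b$ and $d$, so $abcd$ is a genuine kite, in particular an affine kite.

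For (i)$\Rightarrow$(ii), if $abcd$ is an affine kite, conjugating the axial symmetry of a Euclidean kite by the affine map realizing it produces an affine (oblique) reflection $\tau$ mapping $abcd$ onto itself; being an involution with a line of fixed points, $\tau$ fixes one diagonal pointwise and swaps the endpoints of the other. I would split into two cases. If the axis is $l(ac)$, then the midpoint of $bd$ lies on $l(ac)$; substituting $\bar\alpha=\lambda\alpha$, $\bar\beta=\lambda\beta$ into this incidence and using $\alpha\ne\beta$ reduces it to $\lambda=1$, i.e. (ii). If the axis is $l(bd)$, then the midpoint of $ac$ lies on $l(bd)$; using the involutivity of the flip, which gives $\frac{1-\beta}{1-\alpha}=\bar\beta$ and $\frac{1-\bar\beta}{1-\bar\alpha}=\beta$, the incidence collapses to $\beta+\bar\beta=2$, impossible since $\beta,\bar\beta\in(0,1)$. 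Hence only the first case survives and (ii) follows.

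The final assertion is then immediate: a non-trapezoid has exactly two parametrizations, namely $Q(\alpha,\beta)$ over one closing side and its flip $Q(\bar\alpha,\bar\beta)$ over the other, and by (i)$\Leftrightarrow$(iii) these coincide for an affine kite. The step I expect to be the main obstacle is (i)$\Rightarrow$(ii): one must correctly translate \emph{affine kite} into the existence of an oblique reflection fixing a diagonal, and then eliminate the spurious axis $l(bd)$. It is exactly the bound $\beta,\bar\beta<1$ that excludes this case and forces the symmetry axis through the vertex $c$ at which the two closing sides meet.
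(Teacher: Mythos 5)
Your proposal is correct, and it works in the same framework as the paper's proof: both place $abcd$ in the coordinates of Figure~\ref{fig:parameter1} and reduce everything to the identity $(\bar\alpha,\bar\beta)=\lambda(\alpha,\beta)$ with $\lambda=\frac{1-\beta}{(1-\alpha)\beta}$, so that (ii)$\Leftrightarrow$(iii) is immediate and the kite condition becomes $\bar\beta=\beta$ (equivalently $\lambda=1$). The substantive difference is in (i)$\Rightarrow$(ii). The paper asserts outright that $Q(\alpha,\beta)$ is an affine kite if and only if the diagonal $ac$ meets the midpoint of $bd$, and then identifies the would-be symmetry as the affine map $a\mapsto a$, $s\mapsto t$, $t\mapsto s$, i.e.\ the coordinate swap, which fixes $c$ iff $\bar\beta=\beta$; it never discusses the possibility that the bisected diagonal is $ac$ rather than $bd$. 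You do not take this for granted: your computation $(\beta-\alpha)(1-\lambda)=0$ settles the $l(ac)$-axis case, and your elimination of the $l(bd)$-axis case via $\bar\beta+\beta=2$ (using $\bar\beta=\frac{1-\beta}{1-\alpha}$ and the involutivity of the flip, both of which check out) correctly fills in a step the paper leaves implicit. For comparison, that case can also be excluded without computation: the opening/closing type of a side is an affine invariant, and a kite whose axis passes through $b$ and $d$ has $ab$ and $bc$ of the same type, contradicting the standing convention that $bc$ is closing while $ab$ is opening. Net effect: same method and same algebra at the core, with your version slightly longer but self-contained precisely at the point where the paper's proof is tersest.
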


\begin{proof}
We refer to the representation of $Q(\alpha,\beta)$ given in Figure~\ref{fig:parameter1}. Then $Q(\alpha,\beta)$ is an affine kite if and only if the diagonal $ac$ meets the midpoint of $bd$. Equivalently, the linear map given by $a \mapsto a$, $s \mapsto t$ and $t \mapsto s$ maps $c=\left(1-\beta,1-\bar{\beta}\right)$ onto itself. Since that map is nothing but the reflection exchanging the coordinates, $Q(\alpha,\beta)$ is an affine kite if and only if $\bar{\beta}=\beta$. Taking into account that $\left(\bar{\alpha},\bar{\beta}\right)=\frac{1-\beta}{(1-\alpha)\beta}(\alpha,\beta)$, we see the equivalence of (i), (ii) and (iii).
\end{proof}

\begin{lem}\label{lem:odd_positive}
\begin{itemize}
\item[(i)]
If $Q(\alpha,\beta)$, $0 < \alpha < \beta < 1$, is no affine kite, then $Q(\alpha,\beta)$ is $n$-gc-self-affine for every odd $n \ge 5$.

\item[(ii)]
Every affine kite is $n$-gc-self-affine for every odd $n \ge 7$.
\end{itemize}
\end{lem}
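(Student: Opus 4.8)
The plan is to obtain every target quadrangle $Q(\alpha,\beta)$ as the outcome of one final glueing in which one factor is a trapezoid assembled from many copies of $Q(\alpha,\beta)$ through Lemma~\ref{lem:trapezoids}(ii), and the other factor is a small fixed configuration of copies. Since Lemma~\ref{lem:trapezoids}(ii) supplies $T(\gamma)$ from any even number $k\ge 4$ of copies (for $\gamma$ in a suitable range), adjoining a fixed odd number of further copies will sweep out all large odd $n$. Throughout I will use that glueing a non-trapezoid with a trapezoid preserves the affine quotient (the two rows of Lemma~\ref{lem:glueing}(ii)), so the quotient of the assembly stays $\tfrac{\alpha}{\beta}$; this is consistent with Lemma~\ref{lem:parity} and is precisely what forces $n$ to be odd.

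For (i) I would exploit the flip. Setting $\gamma=\frac{1-\beta}{(1-\alpha)\beta}$, Lemma~\ref{lem:glueing}(ii) gives $Q(\alpha,\beta)\cdot T(\gamma)=Q(\alpha\gamma,\beta\gamma)$, and by \eqref{eq:flip} the parameters $(\alpha\gamma,\beta\gamma)$ equal $(\bar\alpha,\bar\beta)$, so this is $Q(\alpha,\beta)^F=Q(\alpha,\beta)$. Because $Q(\alpha,\beta)$ is \emph{not} an affine kite, Lemma~\ref{lem:kites} gives $\gamma\ne 1$, and after passing to the flip parametrization if necessary I may assume $\gamma<1$, so $T(\gamma)$ is a genuine trapezoid. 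Since $\alpha\beta<1$ one has $\gamma\ge\alpha\beta\,\gamma=\alpha\beta\min\{\gamma,1\}$, so $\gamma$ meets the admissibility bound of Lemma~\ref{lem:trapezoids}(ii) and $T(\gamma)$ can be built from any even $k\ge 4$ copies. Glueing on the single remaining copy realizes $Q(\alpha,\beta)$ from $n=k+1$ copies, i.e. from every odd $n\ge 5$.

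For (ii) the flip of a kite is trivial ($Q(\alpha,\beta)^F=Q(\alpha,\beta)$), so the device above is unavailable and I would instead manufacture an auxiliary non-kite of the \emph{same} affine quotient from three copies: $R=Q(\alpha,\beta)\cdot T(\alpha\beta)=Q(\alpha^2\beta,\alpha\beta^2)$, where $T(\alpha\beta)$ comes from two copies via Lemma~\ref{lem:trapezoids}(i). Its flip factor is $\rho=\frac{1-\alpha\beta^2}{(1-\alpha^2\beta)\alpha\beta^2}$; I will show $\rho>1$, so by Lemma~\ref{lem:kites} $R$ is not a kite and its flip is a distinct parametrization to exploit. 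Putting $\gamma'=\frac{1}{\rho\,\alpha\beta}$ and glueing through the flip, Lemma~\ref{lem:glueing}(ii) yields $R^F\cdot T(\gamma')=Q(\rho\alpha^2\beta\,\gamma',\rho\alpha\beta^2\,\gamma')=Q(\alpha,\beta)$, recovering the kite. This assembles $Q(\alpha,\beta)$ from $3+k$ copies, i.e. from every odd $n\ge 7$; the three auxiliary copies here, versus one in part (i), account exactly for the shift from $n\ge 5$ to $n\ge 7$.

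The step I expect to require the most care is checking that $\gamma'$ lands in the half-open interval $[\alpha\beta,1)$ demanded by Lemma~\ref{lem:trapezoids}(ii) (the minimum there equals $1$ for a kite, so the lower bound is $\alpha\beta$). After clearing denominators both bounds become polynomial inequalities in $\alpha,\beta$ that are made transparent by substituting the defining kite relation $1-\beta=(1-\alpha)\beta$ of Lemma~\ref{lem:kites}(ii): the condition $\gamma'<1$ is equivalent to $\rho\,\alpha\beta>1$, which reduces to $\alpha\beta<1$ and in particular certifies $\rho>1$; and $\gamma'\ge\alpha\beta$ reduces to $1-\alpha\ge\alpha^2\beta(1-\beta)$, which holds since $\alpha^2\beta(1-\beta)<1-\beta=(1-\alpha)\beta<1-\alpha$. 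With these two inequalities in hand both constructions close up and deliver the stated ranges of $n$.
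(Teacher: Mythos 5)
Your proposal is correct and is essentially the paper's own proof: part (i) is the paper's construction verbatim (glue $T\left(\frac{1-\beta}{(1-\alpha)\beta}\right)$, built from $n-1$ copies via Lemma~\ref{lem:trapezoids}(ii) after flipping so the factor is $<1$, onto one copy to realize $Q(\alpha,\beta)^F$), and in part (ii) your $\gamma'=\frac{1}{\rho\alpha\beta}$ simplifies to exactly the paper's $\gamma=\frac{(1-\alpha^2\beta)\beta}{1-\alpha\beta^2}$, so your identity $R^F\cdot T(\gamma')=Q(\alpha,\beta)$ is precisely the paper's $Q(\alpha,\beta)=(Q(\alpha,\beta)\cdot T(\alpha\beta))^F\cdot T(\gamma)$, with the same range checks $\alpha\beta\le\gamma'<1$ carried out via the kite relation. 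The only omission is the trivial one: in (ii) ``affine kite'' includes parallelograms, which fall outside the $Q(\alpha,\beta)$ parametrization and which the paper dismisses in one sentence as trapezoids.
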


\begin{proof}
(i). By Lemma~\ref{lem:kites}, $\frac{1-\beta}{(1-\alpha)\beta} \ne 1$. As in the proof of Lemma~\ref{lem:trapezoids}, we can assume that $\frac{1-\beta}{(1-\alpha)\beta} < 1$, since otherwise we consider $Q(\alpha,\beta)^F$ instead of $Q(\alpha,\beta)$. By Lemma~\ref{lem:trapezoids}(ii), $T\left(\frac{1-\beta}{(1-\alpha)\beta}\right)$ has a gc-dissection into $n-1$ copies of $Q(\alpha,\beta)$. Now Lemma~\ref{lem:glueing}(ii) shows that
$$
Q(\alpha,\beta)^F=Q\left(\frac{1-\beta}{(1-\alpha)\beta}(\alpha,\beta)\right)=Q(\alpha,\beta) \cdot T\left(\frac{1-\beta}{(1-\alpha)\beta}\right)
$$
has a gc-dissection into $n$ copies of $Q(\alpha,\beta)$, which yields (i).

(ii). The claim is trivial for trapezoidal affine kites (i.e., for parallelograms). It remains to consider a kite $Q(\alpha,\beta)$, $0 < \alpha < \beta < 1$. By Lemma~\ref{lem:kites}, $\beta=\frac{1}{2-\alpha}$. An easy calculation shows that the number $\gamma=\frac{(1-\alpha^2\beta)\beta}{1-\alpha\beta^2}$ satisfies $\alpha\beta \le \gamma < 1$. Then Lemma~\ref{lem:trapezoids} shows that $T(\alpha\beta)$ has a gc-dissection into two copies of $Q(\alpha,\beta)$ and $T(\gamma)$ has a gc-dissection into $n-3$ copies of $Q(\alpha,\beta)$. Another simple calculation gives
$$
Q(\alpha,\beta)=(Q(\alpha,\beta) \cdot T(\alpha\beta))^F \cdot T(\gamma),
$$
which in turn yields that $Q(\alpha,\beta)$ has a gc-dissection into $(1+2)+(n-3)=n$ copies of $Q(\alpha,\beta)$.
\end{proof}

\begin{lem}\label{lem:kites5}
If $Q(\alpha,\beta)$, $0 < \alpha < \beta < 1$, is an affine kite, then $Q(\alpha,\beta)$ is not $5$-gc-self-affine.
\end{lem}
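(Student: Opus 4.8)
The plan is to argue by contradiction: suppose the affine kite $K=Q(\alpha,\beta)$ (so $\tfrac{1-\beta}{(1-\alpha)\beta}=1$, equivalently $\beta=\tfrac{1}{2-\alpha}$, by Lemma~\ref{lem:kites}) has a gc-dissection into five copies of itself, and fix an associated dissection tree. By Lemma~\ref{lem:tree} every node of the tree is a quadrangle, and by Lemma~\ref{lem:parity} any subtree assembled from $m$ tiles has affine quotient $\left(\tfrac{\alpha}{\beta}\right)^{k}$ with $k\equiv m\pmod 2$ and $0\le k\le m$. Writing $q=\tfrac{\alpha}{\beta}\in(0,1)$ and using that $k\mapsto q^{k}$ is injective, the root $K$ (affine quotient $q=q^{1}$) must have exponent $k=1$. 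The whole difficulty is that this exponent bookkeeping does not by itself pin down the shape, so I must track the actual parameters of the children produced at the root.

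First I would combine the root split $5=m_1+m_2$ with Lemma~\ref{lem:glueing}. Since the root is a non-trapezoid of exponent $1$, the final glueing can only be of two kinds: either \emph{(A)} a non-trapezoid of exponent $1$ glued to a trapezoid as in Lemma~\ref{lem:glueing}(ii) (a multiplicative combination of two non-trapezoids would give exponent $k_1+k_2\ge 2$, and $Q\cdot P$ is impossible by Lemma~\ref{lem:glueing}(iii)); or \emph{(B)} a divisional combination $C_1:C_2$ of two non-trapezoids with $|k_1-k_2|=1$ as in Lemma~\ref{lem:glueing}(i). Matching parities with $m_1+m_2=5$ leaves exactly: (A) with $m_C\in\{1,3\}$, and (B) with $\{m_1,m_2\}\in\{\{1,4\},\{2,3\}\}$ and $(k_1,k_2)\in\{(1,2),(2,1),(2,3)\}$. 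The only explicit structural input required is the list of the two affine types obtainable from two kite tiles, namely $Q(\alpha^2,\beta^2)=K\cdot K$ and $T(\alpha\beta)=K:K$, together with the flip factor of $Q(\alpha^2,\beta^2)$, which the kite relation $\beta=\tfrac1{2-\alpha}$ reduces from $\tfrac{1-\beta^2}{(1-\alpha^2)\beta^2}$ to $\bar\rho:=\tfrac{3-\alpha}{1+\alpha}$.

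The uniform idea is that, by Lemma~\ref{lem:kites}, the kite has a single parametrization $(\alpha,\beta)$; so in each case I would invert the relevant formula of Lemma~\ref{lem:glueing} by equating the result to $(\alpha,\beta)$ and solving for the parameters of the child quadrangle used in that glueing. In every case this forces one child-parameter to be $\ge 1$, which is impossible because all parameters of quadrangles and trapezoids lie in $(0,1)$. Concretely: in (A) with $m_C=1$ one needs the trapezoid parameter $\gamma=1$; in (A) with $m_C=3$, gluing with $T(\alpha\beta)$ forces the smaller parameter of the three-tile factor to equal $\tfrac1{\beta}>1$; in (B) with $(k_1,k_2)=(1,2)$ the formula $K:C_2$ forces the larger parameter of the four-tile factor to equal $\tfrac1{q}>1$; and in the two $\{2,3\}$-cases the two possible orientations of $Q(\alpha^2,\beta^2)$ force the larger parameter of the three-tile factor to equal one of $\tfrac1{\alpha}$, $\tfrac1{\bar\rho\,\alpha}$, $\tfrac{\beta}{\alpha^2}$ or $\tfrac{\beta}{\bar\rho\,\alpha^2}$. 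Note that these arguments never need the explicit (and rather branched) classification of the three- and four-tile subtrees; they only use that some child-parameter is forced out of range.

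I expect the main work to be the last group of estimates, i.e. checking that each of the four quantities above exceeds $1$ throughout $0<\alpha<1$. Substituting $\beta=\tfrac1{2-\alpha}$ and $\bar\rho=\tfrac{3-\alpha}{1+\alpha}$ turns these into sign statements about low-degree polynomials that factor cleanly through $(1-\alpha)$: for instance $\bar\rho\,\alpha<1$ is $-(1-\alpha)^2<0$, the inequality $\tfrac{\beta}{\alpha^2}>1$ is $\alpha^2(2-\alpha)<1$ with $1-\alpha^2(2-\alpha)=(1-\alpha)^2(1+\alpha)$, and $\tfrac{\beta}{\bar\rho\,\alpha^2}>1$ reduces to $(1+\alpha)-(2-\alpha)(3-\alpha)\alpha^2=(1-\alpha)^2(1+3\alpha-\alpha^2)>0$. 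Each factor is manifestly strict on $(0,1)$, the boundary case $\alpha=1$ being the degenerate one. These elementary verifications are the crux, since they are precisely what excludes the divisional combinations that the parity argument of Lemma~\ref{lem:parity} alone cannot rule out; once they are in place, all five cases collapse and no $5$-gc-self-affinity of $K$ can exist.
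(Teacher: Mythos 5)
Your proposal is correct, but it takes a genuinely different route from the paper's proof. The paper first prunes the \emph{entire} extended dissection tree, via bookkeeping of flips and of the operations $\cdot$ and $:$ (its necessities (A)--(C)), down to five explicit tree shapes (Cases I.a--I.d and II), and then for each of these computes the full parametrization of the root as a rational function of $\alpha$ along the whole tree, reaching a polynomial contradiction case by case. You never descend into the $3$- and $4$-tile subtrees at all: you constrain only the top-level split $5=m_1+m_2$ using the parity bookkeeping of Lemma~\ref{lem:parity}, and then \emph{invert} the glueing formulas of Lemma~\ref{lem:glueing} at the root --- legitimate precisely because the kite's parametrization is unique by Lemma~\ref{lem:kites}, so the computed parametrization of the root must equal $(\alpha,\beta)$ componentwise --- forcing some parameter of a child (one of $\gamma=1$, $1/\beta$, $1/q$, $1/\alpha$, $1/(\bar\rho\alpha)$, $\beta/\alpha^2$, $\beta/(\bar\rho\alpha^2)$) out of the admissible range $(0,1)$. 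I verified your case enumeration (root exponent $k=1$; exclusion of $\cdot$ between two non-trapezoids since $k_1+k_2\ge 2$; $Q\cdot P$ impossible; the only trapezoid from two kite tiles being $T(\alpha\beta)$ and the only non-trapezoid being $Q(\alpha^2,\beta^2)$ with flip factor $\bar\rho=\frac{3-\alpha}{1+\alpha}$) and all the forced values: the argument is complete. What your route buys is brevity and robustness --- no enumeration of internal subtree structure, and short inequalities in place of five rational-function computations; what the paper's heavier machinery buys is a template reused essentially verbatim for the positive classification in Theorem~\ref{thm:n=3}, where one must actually \emph{solve} $R=Q(\alpha,\beta)^F$ rather than merely refute $R=Q(\alpha,\beta)$, which your out-of-range trick cannot do. One harmless slip: the factorization $1-\alpha^2(2-\alpha)=(1-\alpha)^2(1+\alpha)$ is wrong --- correctly $1-\alpha^2(2-\alpha)=(1-\alpha)\left(1+\alpha-\alpha^2\right)$ --- but the inequality $\beta/\alpha^2>1$ you wanted is immediate anyway from $\beta>\alpha>\alpha^2$ without the kite relation, so the conclusion is unaffected.
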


\begin{proof}
Suppose that some affine kite $Q\left(\alpha,\frac{1}{2-\alpha}\right)$, $0 < \alpha < 1$ (cf. Lemma~\ref{lem:kites}), is $5$-gc-self-affine.

\emph{Step 1. Reduction to five extended dissection trees. }
The dissection tree has five leafs, called $L$, and a root $R$. 
$L$ and $R$ have the unique parametrization $Q\left(\alpha,\frac{1}{2-\alpha}\right)$, see Lemma~\ref{lem:kites}(iii).
We extend the tree as follows. Given a vertex $Q$ that is not a leaf, a parametrization of $Q$ is obtained from the parametrizations of its two children $C_1$ and $C_2$ by an operation $\cdot$ or $:$, possibly preceded by a flip on $C_1$ and/or $C_2$, see Lemma~\ref{lem:glueing}. We mark the operation $\cdot$ or $:$ at the vertex $Q$ and flips, in case they exist, at the corresponding edges between $Q$ and its children, see Figure~\ref{fig:kites}.
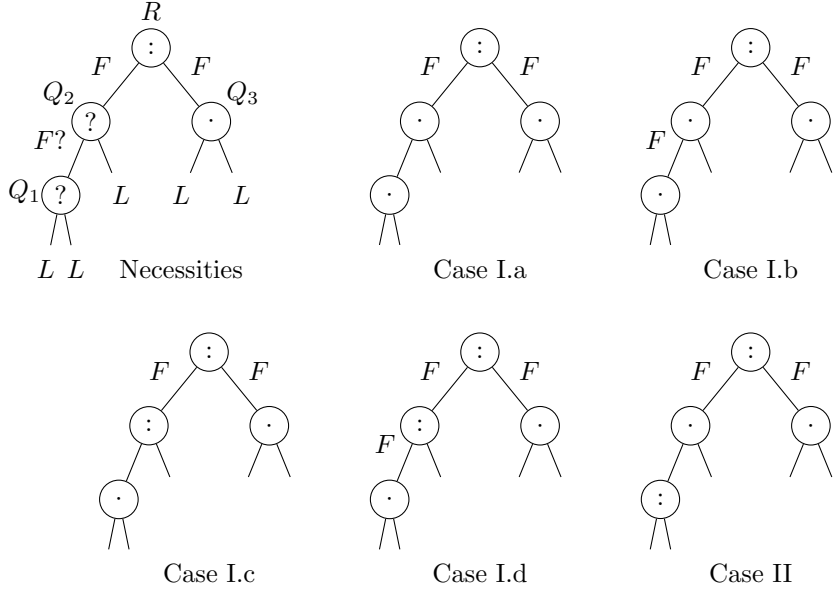
\begin{figure}
\begin{center}
\begin{tikzpicture}[scale=.45]

\draw (0,.51) node{\tikz[xscale=.4,yscale=.49]{

  \node[shape=circle,draw=black] (R) at (0,0) {$:$};
  \node[shape=circle,draw=black] (Q2) at (-2,-2) {\!\!$?$\!\!};
  \node[shape=circle,draw=black] (Q3) at (2,-2) {$\cdot$};
  \node[shape=circle,draw=black] (Q1) at (-3,-4) {\!\!$?$\!\!};
  \node[shape=circle] (L1) at (-3.5,-6) {$\quad$};
  \node[shape=circle] (L2) at (-2.5,-6) {$\quad$};
  \node[shape=circle] (L3) at (-1,-4) {$\quad$};
  \node[shape=circle] (L4) at (1,-4) {$\quad$};
  \node[shape=circle] (L5) at (3,-4) {$\quad$};

\path [-] (R) edge node[above left] {$F$} (Q2);
\path [-] (R) edge node[above right] {$F$} (Q3);
\path [-] (Q2) edge node[above left] {$F?$} (Q1);
\path [-] (Q1) edge (L1);
\path [-] (Q1) edge (L2);
\path [-] (Q2) edge (L3);
\path [-] (Q3) edge (L4);
\path [-] (Q3) edge (L5);

\draw
  (0,.5) node[above] {$R$}
  (-3.35,-3.9) node[left] {$Q_1$}
  (-2.2,-1.8) node[above left] {$Q_2$}
  (2.2,-1.8) node[above right] {$Q_3$}
  (L1) node {$L$}
  (L2) node {$L$}
  (L3) node {$L$}
  (L4) node {$L$}
  (L5) node {$L$}
  (1,-6) node {Necessities}
	;

}}
;

\draw (10,0) node{\tikz[xscale=.4,yscale=.49]{

  \node[shape=circle,draw=black] (R) at (0,0) {$:$};
  \node[shape=circle,draw=black] (Q2) at (-2,-2) {$\cdot$};
  \node[shape=circle,draw=black] (Q3) at (2,-2) {$\cdot$};
  \node[shape=circle,draw=black] (Q1) at (-3,-4) {$\cdot$};
  \node[shape=circle] (L1) at (-3.5,-6) {$\quad$};
  \node[shape=circle] (L2) at (-2.5,-6) {$\quad$};
  \node[shape=circle] (L3) at (-1,-4) {$\quad$};
  \node[shape=circle] (L4) at (1,-4) {$\quad$};
  \node[shape=circle] (L5) at (3,-4) {$\quad$};

\path [-] (R) edge node[above left] {$F$} (Q2);
\path [-] (R) edge node[above right] {$F$} (Q3);
\path [-] (Q2) edge node[above left] {} (Q1);
\path [-] (Q1) edge (L1);
\path [-] (Q1) edge (L2);
\path [-] (Q2) edge (L3);
\path [-] (Q3) edge (L4);
\path [-] (Q3) edge (L5);

\draw
  (0,-6) node {Case I.a}
	;

}}
;

\draw (18,0) node{\tikz[xscale=.4,yscale=.49]{

  \node[shape=circle,draw=black] (R) at (0,0) {$:$};
  \node[shape=circle,draw=black] (Q2) at (-2,-2) {$\cdot$};
  \node[shape=circle,draw=black] (Q3) at (2,-2) {$\cdot$};
  \node[shape=circle,draw=black] (Q1) at (-3,-4) {$\cdot$};
  \node[shape=circle] (L1) at (-3.5,-6) {$\quad$};
  \node[shape=circle] (L2) at (-2.5,-6) {$\quad$};
  \node[shape=circle] (L3) at (-1,-4) {$\quad$};
  \node[shape=circle] (L4) at (1,-4) {$\quad$};
  \node[shape=circle] (L5) at (3,-4) {$\quad$};

\path [-] (R) edge node[above left] {$F$} (Q2);
\path [-] (R) edge node[above right] {$F$} (Q3);
\path [-] (Q2) edge node[above left] {$F$} (Q1);
\path [-] (Q1) edge (L1);
\path [-] (Q1) edge (L2);
\path [-] (Q2) edge (L3);
\path [-] (Q3) edge (L4);
\path [-] (Q3) edge (L5);

\draw
  (0,-6) node {Case I.b}
	;

}}
;

\draw (2,-9) node{\tikz[xscale=.4,yscale=.49]{

  \node[shape=circle,draw=black] (R) at (0,0) {$:$};
  \node[shape=circle,draw=black] (Q2) at (-2,-2) {$:$};
  \node[shape=circle,draw=black] (Q3) at (2,-2) {$\cdot$};
  \node[shape=circle,draw=black] (Q1) at (-3,-4) {$\cdot$};
  \node[shape=circle] (L1) at (-3.5,-6) {$\quad$};
  \node[shape=circle] (L2) at (-2.5,-6) {$\quad$};
  \node[shape=circle] (L3) at (-1,-4) {$\quad$};
  \node[shape=circle] (L4) at (1,-4) {$\quad$};
  \node[shape=circle] (L5) at (3,-4) {$\quad$};

\path [-] (R) edge node[above left] {$F$} (Q2);
\path [-] (R) edge node[above right] {$F$} (Q3);
\path [-] (Q2) edge node[above left] {} (Q1);
\path [-] (Q1) edge (L1);
\path [-] (Q1) edge (L2);
\path [-] (Q2) edge (L3);
\path [-] (Q3) edge (L4);
\path [-] (Q3) edge (L5);

\draw
  (0,-6) node {Case I.c}
	;

}}
;

\draw (10,-9) node{\tikz[xscale=.4,yscale=.49]{

  \node[shape=circle,draw=black] (R) at (0,0) {$:$};
  \node[shape=circle,draw=black] (Q2) at (-2,-2) {$:$};
  \node[shape=circle,draw=black] (Q3) at (2,-2) {$\cdot$};
  \node[shape=circle,draw=black] (Q1) at (-3,-4) {$\cdot$};
  \node[shape=circle] (L1) at (-3.5,-6) {$\quad$};
  \node[shape=circle] (L2) at (-2.5,-6) {$\quad$};
  \node[shape=circle] (L3) at (-1,-4) {$\quad$};
  \node[shape=circle] (L4) at (1,-4) {$\quad$};
  \node[shape=circle] (L5) at (3,-4) {$\quad$};

\path [-] (R) edge node[above left] {$F$} (Q2);
\path [-] (R) edge node[above right] {$F$} (Q3);
\path [-] (Q2) edge node[above left] {$F$} (Q1);
\path [-] (Q1) edge (L1);
\path [-] (Q1) edge (L2);
\path [-] (Q2) edge (L3);
\path [-] (Q3) edge (L4);
\path [-] (Q3) edge (L5);

\draw
  (0,-6) node {Case I.d}
	;

}}
;

\draw (18,-9) node{\tikz[xscale=.4,yscale=.49]{

  \node[shape=circle,draw=black] (R) at (0,0) {$:$};
  \node[shape=circle,draw=black] (Q2) at (-2,-2) {$\cdot$};
  \node[shape=circle,draw=black] (Q3) at (2,-2) {$\cdot$};
  \node[shape=circle,draw=black] (Q1) at (-3,-4) {$:$};
  \node[shape=circle] (L1) at (-3.5,-6) {$\quad$};
  \node[shape=circle] (L2) at (-2.5,-6) {$\quad$};
  \node[shape=circle] (L3) at (-1,-4) {$\quad$};
  \node[shape=circle] (L4) at (1,-4) {$\quad$};
  \node[shape=circle] (L5) at (3,-4) {$\quad$};

\path [-] (R) edge node[above left] {$F$} (Q2);
\path [-] (R) edge node[above right] {$F$} (Q3);
\path [-] (Q2) edge node[above left] {} (Q1);
\path [-] (Q1) edge (L1);
\path [-] (Q1) edge (L2);
\path [-] (Q2) edge (L3);
\path [-] (Q3) edge (L4);
\path [-] (Q3) edge (L5);

\draw
  (0,-6) node {Case II}
	;

}}
;

\end{tikzpicture}
\end{center}
\caption{Particular extended dissection trees for the proof of Lemma~\ref{lem:kites5}.\label{fig:kites}}
\end{figure}

We collect necessary properties of the extended dissection tree.
\begin{itemize}
\item[(A)] 
W.l.o.g., no edge emanating from a leaf has a flip.

\item[(B)]
On the path between any leaf $L$ and the root $R$ there is at least one flip.

\end{itemize}

For (A), note that leafs represent kites, that are invariant under flips. 

For (B), suppose that there is no flip on the path from some $L$ to $R$, both with unique parametrization $Q\left(\alpha, \frac{1}{2-\alpha}\right)$. Then, by Lemma~\ref{lem:glueing}, in each operation $\cdot$ or $:$ on the path from $L$ to $R$, the smaller parameter of the respective quadrangle decreases strictly, since it is multiplied by some positive number less than one. Consequently, the smaller parameter of $R$ is smaller than that of $L$, contradicting that both have the same unique parametrization $Q\left(\alpha, \frac{1}{2-\alpha}\right)$.


It follows from (A) and (B) that no child of $R$ is a leaf. Hence there are two sub-trees below $R$, one with $3$ and one with $2$ leafs. Using the commutativity of $\cdot$ and $:$ (cf. \eqref{eq:commutative}), the root $R$, leafs $L$, additional vertices $Q_1$, $Q_2$, $Q_3$ and the edges of the tree are as in the upper left illustration in Figure~\ref{fig:kites}. Moreover, by (A) and (B) we have no flips on the edges emanating from leafs and we have flips on both edges emanating from $R$.

\begin{itemize}
\item[(C)]
The extended dissection tree has the structure displayed under Necessities in Figure~\ref{fig:kites}. In particular, we have the operations $:$ at $R$ and $\cdot$ at $Q_3$, and only for the edge between $Q_1$ and $Q_2$ it is not yet determined if there is a flip.
\end{itemize}

The situation for flips is already shown. 

For the operations at $R$ and $Q_3$, suppose first that we have $\cdot$ at $R$. By Lemma~\ref{lem:parity}, the affine quotient of $Q_2$ and $Q_3$ are $[\alpha(2-\alpha)]^{k_2}$ with $k_2 \in \{1,3\}$ and $[\alpha(2-\alpha)]^{k_3}$ with $k_3 \in \{0,2\}$, respectively. Since $R=Q_2^F \cdot Q_3^F$, we have the quotient $\alpha(2-\alpha)=[\alpha(2-\alpha)]^{k_2} \cdot [\alpha(2-\alpha)]^{k_3}$ by Lemma~\ref{lem:glueing}, which yields $k_2+k_3=1$ and in turn $k_2=1$ and $k_3=0$. By $k_3=0$, we have $:$ at $Q_3$ and $Q_3=T$ is a trapezoid. But then the equation $R=Q_2^F \cdot T^F$ implies that $Q_2^F$ must be a flipped trapezoid or a parallelogram, since these are the only partners for multiplication with flipped trapezoids by Lemma~\ref{lem:glueing}. However $Q_2^F$ cannot be of that type, because its affine quotient is $[\alpha(2-\alpha)]^1\ne 1$. This contradiction shows that the operation at $R$ is $:$. 

Finally, to see that we have $\cdot$ at $Q_3$, assume to the contrary that there is $:$ at $Q_3$. But then $Q_3$ is a trapezoid $T$, and an operation $R=Q_2^F:T^F$ does not exist. So (C) is verified.

Now we discuss cases depending on the operations at $Q_1$ and $Q_2$ and on the existence of a flip between $Q_1$ and $Q_2$.

\emph{Case I, there is $\cdot$ at $Q_1$. } Then we get Case I.a, \ldots, Case I.d from Figure~\ref{fig:kites}.

\emph{Case II, there is $:$ at $Q_1$. } Then $Q_1$ is a trapezoid $T$, which does not allow $:$ at $Q_2$. So there is $\cdot$ at $Q_2$. Moreover, there is no flip between $Q_1=T$ and $Q_2$, because this would give the impossibility $Q_2=T^F\cdot L$, since $L$ is neither a parallelogram nor a flipped trapezoid. We obtain the last situation illustrated in Figure~\ref{fig:kites}.

\emph{Step 2. Discussion of the five remaining cases. } For every case we compute the parametrization of $R$ along the extended dissection tree, based on $L=Q\left(\alpha,\frac{1}{2-\alpha}\right)$, and we analyse the condition $R=Q\left(\alpha,\frac{1}{2-\alpha}\right)$. The computations can be made by hand or by some computer algebra system.

\emph{Case I.a. } Here $R=((L\cdot L)\cdot L)^F:(L\cdot L)^F$ gives
$$
R=Q\left(\frac{(\alpha^2-5\alpha+7)(3-\alpha)\alpha^2}{(\alpha^2+\alpha+1)(\alpha+1)(2-\alpha)^2}\left(\alpha,\frac{1}{2-\alpha}\right)\right).
$$
The condition $R=Q\left(\alpha,\frac{1}{2-\alpha}\right)$ amounts to
$1=\frac{(\alpha^2-5\alpha+7)(3-\alpha)\alpha^2}{(\alpha^2+\alpha+1)(\alpha+1)(2-\alpha)^2}$. Subtraction of $1$, multiplication by the denominator and division by the non-zero term $2(1-\alpha)$ yields the contradiction
$$
0=\alpha^4-4\alpha^3+6\alpha^2-4\alpha-2=(1-\alpha)^4-3 < 1-3=-2,
$$
because $0 < \alpha < 1$.
Therefore Case I.a does not give a $5$-gc-self-affinity of an affine kite.

\emph{Case I.b. } Here $R=((L\cdot L)^F\cdot L)^F:(L\cdot L)^F$ gives
$$
R=Q\left(\frac{(-\alpha^3+4\alpha^2-2\alpha-5)(3-\alpha)\alpha^2}{(\alpha^3-2\alpha^2-2\alpha-1)(\alpha+1)(2-\alpha)^2}\left(\alpha,\frac{1}{2-\alpha}\right)\right).
$$
The condition $R=Q\left(\alpha,\frac{1}{2-\alpha}\right)$ reads as
$1=\frac{(-\alpha^3+4\alpha^2-2\alpha-5)(3-\alpha)\alpha^2}{(\alpha^3-2\alpha^2-2\alpha-1)(\alpha+1)(2-\alpha)^2}$. Subtracting $1$, multiplying by the denominator and dividing by $2(1-\alpha)$ gives the contradiction
$$
0=\alpha^4-4\alpha^3+\alpha^2+6\alpha+2=\alpha^2(2-\alpha)^2-3(1-\alpha)^2+5 > 0-3+5=2
$$
for $0 < \alpha < 1$.

\emph{Case I.c. } Here $R=((L\cdot L): L)^F:(L\cdot L)^F$ gives
$$
R=Q\left(\frac{(4-\alpha)(3-\alpha)\alpha}{(\alpha+2)(\alpha+1)(2-\alpha)}\left(\alpha,\frac{1}{2-\alpha}\right)\right).
$$
The condition $R=Q\left(\alpha,\frac{1}{2-\alpha}\right)$ is now
$1=\frac{(4-\alpha)(3-\alpha)\alpha}{(\alpha+2)(\alpha+1)(2-\alpha)}$. Subtracting $1$, multiplying by the denominator and dividing by $2(1-\alpha)$ gives the contradiction
$$
0=-\alpha^2+2\alpha-2=-(1-\alpha)^2-1 < -1
$$
for $0 < \alpha < 1$.

\emph{Case I.d. } Here $R=((L\cdot L)^F: L)^F:(L\cdot L)^F$ gives
$$
R=Q\left(\frac{(\alpha^2-\alpha-4)(3-\alpha)\alpha}{(\alpha^2-3\alpha-2)(\alpha+1)(2-\alpha)}\left(\alpha,\frac{1}{2-\alpha}\right)\right).
$$
Now $R=Q\left(\alpha,\frac{1}{2-\alpha}\right)$ amounts to
$1=\frac{(\alpha^2-\alpha-4)(3-\alpha)\alpha}{(\alpha^2-3\alpha-2)(\alpha+1)(2-\alpha)}$. Subtracting $1$, multiplying by the denominator and dividing by $2(1-\alpha)$ gives the contradiction $0=2$.

\emph{Case II. } Here $R=((L: L)\cdot L)^F:(L\cdot L)^F$ gives
$$
R=Q\left(\frac{(4-\alpha)(3-\alpha)\alpha}{(\alpha+2)(\alpha+1)(2-\alpha)}\left(\alpha,\frac{1}{2-\alpha}\right)\right)
$$
as in Case I.c.
\end{proof}

\begin{proof}[Proof of Theorem~\ref{thm:main}(iv) and (v)]
Clearly, every trapezoid is $n$-gc-self-affine for all $n \ge 2$, see the rightmost illustration in Figure~\ref{fig:1}. The positive claims of Theorem~\ref{thm:main}(iv) and (v) for non-trapezoids are given by Lemma~\ref{lem:odd_positive}. The remaining negative result for kites in Theorem~\ref{thm:main}(v) is the claim of Lemma~\ref{lem:kites5}.
\end{proof}


\section{$3$-gc-self-affine quadrangles\label{sec:n=3}}

\begin{thm}\label{thm:n=3}
A convex quadrangle is $3$-gc-self-affine if and only if it belongs to one of the following families.
\begin{itemize}
\item[(I)]
$\{T(\alpha):\,0 < \alpha < 1\} \cup \{P\}$ (the family of all trapezoids including parallelograms).

\item[(II)]
$\left\{Q\left(\alpha,\frac{-1+\sqrt{1+4\alpha-4\alpha^2}}{2\alpha(1-\alpha)}\right):0 < \alpha < 1\right\}$.\\
Here $\alpha < \frac{-1+\sqrt{1+4\alpha-4\alpha^2}}{2\alpha(1-\alpha)} < 1$ whenever $0 < \alpha < 1$.

\item[(III)]
$\left\{Q\left(\alpha,\frac{1-3\alpha+\alpha^2+\sqrt{1-2\alpha+7\alpha^2-6\alpha^3+\alpha^4}}{2(1-\alpha)}\right):0 < \alpha < 1\right\}$.\\
Here $\alpha < \frac{1-3\alpha+\alpha^2+\sqrt{1-2\alpha+7\alpha^2-6\alpha^3+\alpha^4}}{2(1-\alpha)} < 1$ whenever $0 < \alpha < 1$.

\item[(IV)]
$\Big\{Q(\alpha,\beta):0 < \alpha < \beta < 1,$\\
$\phantom{\Big\{Q
}\left(\alpha-\alpha^2\right)\beta^3+\left(1-2\alpha+2\alpha^2\right)\beta^2+\left(-1+2\alpha-4\alpha^2+\alpha^3\right)\beta+\alpha^2=0 \Big\}$.\\
$\text{Here }\Big\{(\alpha,\beta): 0 < \alpha < \beta < 1,$\\
$\phantom{\text{Here }
} \left(\alpha-\alpha^2\right)\beta^3+\left(1-2\alpha+2\alpha^2\right)\beta^2+\left(-1+2\alpha-4\alpha^2+\alpha^3\right)\beta+\alpha^2=0\Big\}$ is the graph of a function $\beta:(0,1) \to (0,1)$ with $\alpha < \beta(\alpha) < 1$ whenever $0 < \alpha < 1$.
\end{itemize}
\end{thm}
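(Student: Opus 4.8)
The plan is to determine the full set of $3$-gc-self-affine convex quadrangles by running through every admissible dissection tree, exactly in the spirit of the proof of Lemma~\ref{lem:kites5}, and then reading off the resulting families. A gc-self-affinity into three tiles has a dissection tree with three leaves, so the tree is the unique ``caterpillar'': a root $R$ with one leaf child $L_0$ and one internal child $Q_1$, where $Q_1$ in turn has two leaf children. All three leaves and the root $R$ represent the same affine type as the quadrangle $Q$ to be characterized, while $Q_1$ is an auxiliary quadrangle. By Lemma~\ref{lem:glueing}, each of the two internal vertices carries an operation $\cdot$ or $:$, and each of the four edges may or may not carry a flip $F$; up to the commutativity \eqref{eq:commutative} the only free data are the two operations, a flip $g$ on the edge $R$--$Q_1$, a flip $h$ on the edge $R$--$L_0$, and the number $j\in\{0,1,2\}$ of flips among the two leaf edges of $Q_1$.

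First I would dispose of the trapezoids. Every parallelogram is trivially $3$-gc-self-affine by two parallel cuts, and every $T(\gamma)$ is $3$-gc-self-affine because $T(\gamma)\cdot T(\gamma)=T(\gamma^2)$ and then $T(\gamma^2)^F\cdot T(\gamma)^F$ realizes $T(\gamma)$ by Lemma~\ref{lem:glueing}(iv); this yields family~(I). For the non-trapezoidal case I write $Q=Q(\alpha,\beta)$, $0<\alpha<\beta<1$, with affine quotient $q=\frac{\alpha}{\beta}\in(0,1)$, and use that $q$ is flip-invariant, multiplies under $\cdot$ and divides under $:$ (Lemma~\ref{lem:glueing}, cf. Lemma~\ref{lem:parity}). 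At $Q_1$ the operation $\cdot$ produces a non-trapezoid of quotient $q^2$, whereas $:$ produces a trapezoid of quotient $1$. Matching the quotient $q$ of $R$ then forces exactly two operation patterns: \emph{Case A}, with $\cdot$ at $Q_1$ and $:$ at $R$ (the only way to return from $q^2$ to $q$, since $\cdot$ at $R$ would give $q^3\neq q$), and \emph{Case B}, with $:$ at $Q_1$ and $\cdot$ at $R$ (a trapezoid can only be combined with a non-trapezoid by $\cdot$, by Lemma~\ref{lem:glueing}(ii)).

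Next, for each case I enumerate the finitely many flip patterns and use Lemma~\ref{lem:glueing} together with \eqref{eq:flip} to write the parametrization of $R$ as an explicit pair of rational functions of $\alpha$, $\beta$ and the flip factor $c=\frac{1-\beta}{(1-\alpha)\beta}$. Self-affinity is the requirement that this pair equal $(\alpha,\beta)$ or its flip $(\bar\alpha,\bar\beta)=c(\alpha,\beta)$; since the quotient already matches, each choice collapses to a single scalar equation, which after clearing denominators is a polynomial relation in $\alpha,\beta$. Carrying this out, Case~B together with the flip-free pattern of Case~A both yield $\alpha\beta=c$, i.e. $(\alpha-\alpha^2)\beta^2+\beta-1=0$, which is family~(II); the Case~A pattern with $g=h=1$, $j=0$ yields $(1-\beta)^2(1+\beta)\alpha=(1-\alpha)^2(1+\alpha)\beta^2$, the relation of family~(III); and the Case~A pattern with $g=h=1$, $j=1$ yields the cubic of family~(IV). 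All remaining patterns produce equations of the shape $c^m\alpha\beta=1$ or $\alpha(1-\beta^2)=\beta(1-\alpha^2)$, none of which has a root with $0<\alpha<\beta<1$, so they contribute nothing new.

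The main obstacle is two-fold. The algebraic part --- expanding the $R$-parametrizations and reducing the matching conditions to the stated quadratics and to the cubic --- is routine but heavy, and is best checked with a computer algebra system, as already noted for Lemma~\ref{lem:kites5}. The genuinely delicate part is the bookkeeping of the flip equivalence: because $Q(\alpha,\beta)$ and $Q(\bar\alpha,\bar\beta)$ are the same affine type, several flip patterns produce curves that are flip-images of one another, and one must show that, as \emph{sets of affine types}, they collapse to exactly the three non-trapezoidal families (II), (III), (IV). Concretely, I would verify that the curve from the pattern $(g,h,j)=(1,1,0)$ is the flip of family~(III) and that the pattern $(1,1,1)$ is the flip of family~(IV), e.g. by substituting \eqref{eq:flip} into a solution of one equation and checking it solves the other. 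Finally, for each of (II), (III), (IV) one solves the defining polynomial for $\beta$, selects the branch in $(\alpha,1)$, and confirms the stated range $\alpha<\beta(\alpha)<1$ by a sign analysis of the polynomial at $\beta=\alpha$ and $\beta=1$ together with monotonicity; this simultaneously establishes the ``if'' direction, since the flip pattern that produced each equation \emph{is} an explicit gc-dissection of $Q$ into three copies of $Q$.
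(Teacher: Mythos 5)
Your proposal is correct and takes essentially the same route as the paper's proof: reduce to the caterpillar dissection tree, force the two operation patterns by the affine-quotient argument, enumerate flip patterns, compute the root's parametrization per pattern via Lemma~\ref{lem:glueing} and \eqref{eq:flip}, and solve the resulting polynomial matching conditions, which yields exactly the families (I)--(IV) together with the branch analysis for $\beta(\alpha)\in(\alpha,1)$. The only difference is bookkeeping: the paper normalizes away the flip on the root--leaf edge (its reduction (A)) and proves the computed root parametrization can only equal $Q(\alpha,\beta)^F$ (criterion (E)), arriving at nine trees with one target equation each, whereas you carry the extra flip parameter and both targets and must collapse redundant patterns by the flip symmetry --- indeed your patterns $(1,1,0)$ and $(1,1,1)$ are the paper's Cases II.f and II.d with flipped leaf parametrization, so your equations are the flip images of (III) and (IV), exactly the verification you flag as the delicate step.
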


\begin{proof}
We restrict our consideration to the classes $Q(\alpha,\beta)$, $0 < \alpha < \beta < 1$, because the situation is trivial for trapezoids.

\emph{Step 1. Reduction to nine extended dissection trees. }
 If $R$ is the root of a dissection tree with leafs $L=Q(\alpha,\beta)$ of a $3$-gc-self-affinity, then one child of $R$ is a leaf and the other child is a quadrangle $Q$ whose both children are leafs. By the commutativity of the operations $\cdot$ and $:$, w.l.o.g., $Q$ is the left child of $R$. Moreover, the following are satisfied for the extended dissection tree, see the left-hand part of Figure~\ref{fig:n=3}.
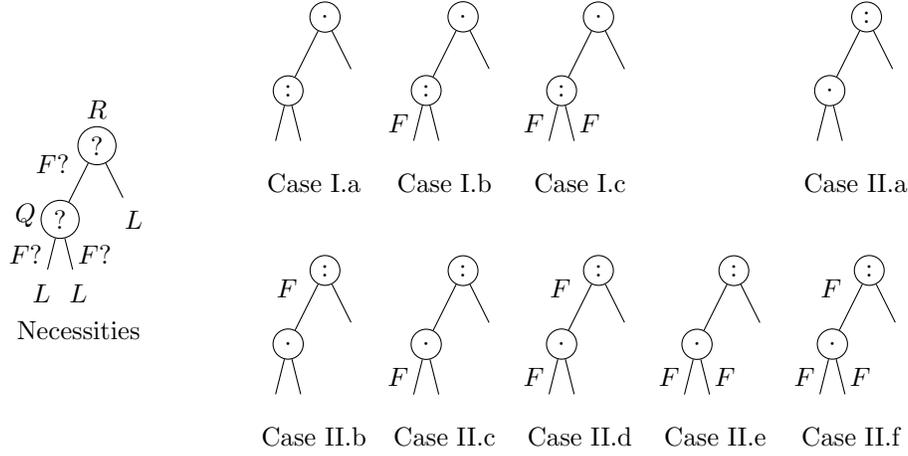
\begin{figure}
\begin{center}
\begin{tikzpicture}[scale=.45]

\draw (-2,-3.5) node{\tikz[scale=.49]{

\node[shape=circle,draw=black] (R) at (0,0) {\!\!$?$\!\!};
\node[shape=circle,draw=black] (Q) at (-1,-2) {\!\!$?$\!\!};
\node[shape=circle] (L1) at (-1.5,-4) {$\quad$};
\node[shape=circle] (L2) at (-.5,-4) {$\quad$};
\node[shape=circle] (L3) at (1,-2) {$\quad$};

\path [-] (R) edge node[above left] {$F?$} (Q);
\path [-] (R) edge node[above right] {$$} (L3);
\path [-] (Q) edge node[left] {$F?$} (L1);
\path [-] (Q) edge node[right] {$F?$} (L2);

\draw
  (0,.5) node[above] {$R$}
  (-1.4,-1.9) node[left] {$Q$}
  (L1) node {$L$}
  (L2) node {$L$}
  (L3) node {$L$}
  (-.5,-5) node {Necessities}
	;

}}
;

\draw (5,0) node{\tikz[scale=.49]{

\node[shape=circle,draw=black] (R) at (0,0) {\!\!$\cdot$\!\!};
\node[shape=circle,draw=black] (Q) at (-1,-2) {\!\!$:$\!\!};
\node[shape=circle] (L1) at (-1.5,-4) {$\quad$};
\node[shape=circle] (L2) at (-.5,-4) {$\quad$};
\node[shape=circle] (L3) at (1,-2) {$\quad$};

\path [-] (R) edge node[above left] {$$} (Q);
\path [-] (R) edge node[above right] {$$} (L3);
\path [-] (Q) edge node[left] {$$} (L1);
\path [-] (Q) edge node[right] {$$} (L2);

\draw
  (-.3,-4.5) node {Case I.a}
;

}}
;

\draw (9,0) node{\tikz[scale=.49]{

\node[shape=circle,draw=black] (R) at (0,0) {\!\!$\cdot$\!\!};
\node[shape=circle,draw=black] (Q) at (-1,-2) {\!\!$:$\!\!};
\node[shape=circle] (L1) at (-1.5,-4) {$\quad$};
\node[shape=circle] (L2) at (-.5,-4) {$\quad$};
\node[shape=circle] (L3) at (1,-2) {$\quad$};

\path [-] (R) edge node[above left] {$$} (Q);
\path [-] (R) edge node[above right] {$$} (L3);
\path [-] (Q) edge node[left] {$F$} (L1);
\path [-] (Q) edge node[right] {$$} (L2);

\draw
  (-.5,-4.5) node {Case I.b}
;

}}
;

\draw (13,0) node{\tikz[scale=.49]{

\node[shape=circle,draw=black] (R) at (0,0) {\!\!$\cdot$\!\!};
\node[shape=circle,draw=black] (Q) at (-1,-2) {\!\!$:$\!\!};
\node[shape=circle] (L1) at (-1.5,-4) {$\quad$};
\node[shape=circle] (L2) at (-.5,-4) {$\quad$};
\node[shape=circle] (L3) at (1,-2) {$\quad$};

\path [-] (R) edge node[above left] {$$} (Q);
\path [-] (R) edge node[above right] {$$} (L3);
\path [-] (Q) edge node[left] {$F$} (L1);
\path [-] (Q) edge node[right] {$F$} (L2);

\draw
  (-.5,-4.5) node {Case I.c}
;

}}
;

\draw (21,0) node{\tikz[scale=.49]{

\node[shape=circle,draw=black] (R) at (0,0) {\!\!$:$\!\!};
\node[shape=circle,draw=black] (Q) at (-1,-2) {\!\!$\cdot$\!\!};
\node[shape=circle] (L1) at (-1.5,-4) {$\quad$};
\node[shape=circle] (L2) at (-.5,-4) {$\quad$};
\node[shape=circle] (L3) at (1,-2) {$\quad$};

\path [-] (R) edge node[above left] {$$} (Q);
\path [-] (R) edge node[above right] {$$} (L3);
\path [-] (Q) edge node[left] {$$} (L1);
\path [-] (Q) edge node[right] {$$} (L2);

\draw
  (-.3,-4.5) node {Case II.a}
;

}}
;

\draw (5,-7.5) node{\tikz[scale=.49]{

\node[shape=circle,draw=black] (R) at (0,0) {\!\!$:$\!\!};
\node[shape=circle,draw=black] (Q) at (-1,-2) {\!\!$\cdot$\!\!};
\node[shape=circle] (L1) at (-1.5,-4) {$\quad$};
\node[shape=circle] (L2) at (-.5,-4) {$\quad$};
\node[shape=circle] (L3) at (1,-2) {$\quad$};

\path [-] (R) edge node[above left] {$F$} (Q);
\path [-] (R) edge node[above right] {$$} (L3);
\path [-] (Q) edge node[left] {$$} (L1);
\path [-] (Q) edge node[right] {$$} (L2);

\draw
  (-.3,-4.5) node {Case II.b}
;

}}
;

\draw (9,-7.5) node{\tikz[scale=.49]{

\node[shape=circle,draw=black] (R) at (0,0) {\!\!$:$\!\!};
\node[shape=circle,draw=black] (Q) at (-1,-2) {\!\!$\cdot$\!\!};
\node[shape=circle] (L1) at (-1.5,-4) {$\quad$};
\node[shape=circle] (L2) at (-.5,-4) {$\quad$};
\node[shape=circle] (L3) at (1,-2) {$\quad$};

\path [-] (R) edge node[above left] {$$} (Q);
\path [-] (R) edge node[above right] {$$} (L3);
\path [-] (Q) edge node[left] {$F$} (L1);
\path [-] (Q) edge node[right] {$$} (L2);

\draw
  (-.5,-4.5) node {Case II.c}
;

}}
;

\draw (13,-7.5) node{\tikz[scale=.49]{

\node[shape=circle,draw=black] (R) at (0,0) {\!\!$:$\!\!};
\node[shape=circle,draw=black] (Q) at (-1,-2) {\!\!$\cdot$\!\!};
\node[shape=circle] (L1) at (-1.5,-4) {$\quad$};
\node[shape=circle] (L2) at (-.5,-4) {$\quad$};
\node[shape=circle] (L3) at (1,-2) {$\quad$};

\path [-] (R) edge node[above left] {$F$} (Q);
\path [-] (R) edge node[above right] {$$} (L3);
\path [-] (Q) edge node[left] {$F$} (L1);
\path [-] (Q) edge node[right] {$$} (L2);

\draw
  (-.5,-4.5) node {Case II.d}
;

}}
;

\draw (17,-7.5) node{\tikz[scale=.49]{

\node[shape=circle,draw=black] (R) at (0,0) {\!\!$:$\!\!};
\node[shape=circle,draw=black] (Q) at (-1,-2) {\!\!$\cdot$\!\!};
\node[shape=circle] (L1) at (-1.5,-4) {$\quad$};
\node[shape=circle] (L2) at (-.5,-4) {$\quad$};
\node[shape=circle] (L3) at (1,-2) {$\quad$};

\path [-] (R) edge node[above left] {$$} (Q);
\path [-] (R) edge node[above right] {$$} (L3);
\path [-] (Q) edge node[left] {$F$} (L1);
\path [-] (Q) edge node[right] {$F$} (L2);

\draw
  (-.5,-4.5) node {Case II.e}
;

}}
;

\draw (21,-7.5) node{\tikz[scale=.49]{

\node[shape=circle,draw=black] (R) at (0,0) {\!\!$:$\!\!};
\node[shape=circle,draw=black] (Q) at (-1,-2) {\!\!$\cdot$\!\!};
\node[shape=circle] (L1) at (-1.5,-4) {$\quad$};
\node[shape=circle] (L2) at (-.5,-4) {$\quad$};
\node[shape=circle] (L3) at (1,-2) {$\quad$};

\path [-] (R) edge node[above left] {$F$} (Q);
\path [-] (R) edge node[above right] {$$} (L3);
\path [-] (Q) edge node[left] {$F$} (L1);
\path [-] (Q) edge node[right] {$F$} (L2);

\draw
  (-.5,-4.5) node {Case II.f}
;

}}
;

\end{tikzpicture}
\end{center}
\caption{Particular extended dissection trees for the proof of Theorem~\ref{thm:n=3}.\label{fig:n=3}}
\end{figure}

\begin{itemize}
\item[(A)]
W.l.o.g., there is no flip on the edge directly connecting $R$ with a leaf $L$.

\item[(B)] 
If there is $:$ at $Q$ then there is no flip between $Q$ and $R$.

\item[(C)] 
If there is exactly one flip below $Q$ then, w.l.o.g., it is on the left edge below $Q$.

\item[(D)]
There is $\cdot$ at $R$ if and only if there is $:$ at $Q$.
\end{itemize}

For (A), note that if there is a dissection tree with leafs $L=Q(\alpha,\beta)$, then the same dissection is described by leafs $L=Q(\alpha,\beta)^F$ when flips on edges emanating from leafs are replaced by non-flips and vice versa.

For (B), if there is $:$ at $Q$ and a flip between $Q$ and $R$, then $Q=T(\gamma)$ is a trapezoid and the operation at $R$ combines $T(\gamma)^F$ with a non-trapezoid. This is impossible. 

Claim (C) follows from the commutativity of $\cdot$ and $:$.

For (D), assume first that we have $\cdot$ at $R$ and $Q$ simultaneously. Then $R$ has the affine quotient $\big(\frac{\alpha}{\beta}\big)^3$ and cannot be of the same affine type as $L$, whose quotient is $\frac{\alpha}{\beta}$, a contradiction. If there is $:$ at both $R$ and $Q$, then $Q=T(\gamma)$ is a trapezoid. By (A) and (B), there are no flips directly below $R$. Thus $R=T(\gamma) : Q(\alpha,\beta)=Q(\alpha,\beta):T(\gamma)$, which does not make sense.

The restrictions (A), (B), (C) and (D) reduce the dissection trees to Cases I.a-c if there is $\cdot$ at $R$ (whence there is $:$ at $Q$) and to Cases II.a-f in the opposite situation, see Figure~\ref{fig:n=3}. 

\emph{Step 2. Discussion of the nine trees. }

\begin{itemize}
\item[(E)]
Any of the remaining trees with leafs $L=Q(\alpha,\beta)$
describes a $3$-gc-self-affinity of a quadrangle if and only if the parametrization of its root $R$ obtained by computation along the tree from $L=Q(\alpha,\beta)$ coincides with $Q(\alpha,\beta)^F=Q\left(\frac{1-\beta}{(1-\alpha)\beta}(\alpha,\beta)\right)$.
\end{itemize}

We have a $3$-gc-self-affinity if and only if $R$ is of the same affine type as $L=Q(\alpha,\beta)$; that is, if the parametrization of $R$ is one of $Q(\alpha,\beta)$ or $Q(\alpha,\beta)^F$. In all cases the parametrization of $R$ is obtained by either $\cdot$ or $:$ with one operand being $L=Q(\alpha,\beta)$. This way we obtain a parameter by multiplication of $\alpha$ with some positive number less than one. But then the parametrization of $R$ is not $Q(\alpha,\beta)$, because its smaller parameter is smaller than $\alpha$. Consequently, $R$ and $L$ have the same affine type if and only if the parametrization of $R$ is $Q(\alpha,\beta)^F$. 

In the following we compute the parametrization of $R$ from $L=Q(\alpha,\beta)$ for all trees determined in Step 1 and analyse criterion (E). Support by a computer algebra system is useful, but not necessary.

\emph{Case I.a. } Here $R=Q(\alpha\beta(\alpha,\beta))$ and (E) amounts to $\alpha\beta=\frac{1-\beta}{(1-\alpha)\beta}$. This is equivalent to $p_\alpha(\beta):=\beta^2+\frac{1}{\alpha(1-\alpha)}\beta-\frac{1}{\alpha(1-\alpha)}=0$. For every $\alpha \in (0,1)$, the quadratic function $p_\alpha(\beta)$ has a unique root $\beta \in (\alpha,1)$, because $p_\alpha(\alpha)=\frac{\alpha^3-1}{\alpha}<0<1=p_\alpha(1)$. That root is $\beta(\alpha)= \frac{-1+\sqrt{1+4\alpha-4\alpha^2}}{2\alpha(1-\alpha)}$, and we obtain family (II) from Theorem~\ref{thm:n=3}.

\emph{Case I.b. } Now $R=Q\left(\frac{(1-\beta)\alpha}{1-\alpha}(\alpha,\beta)\right)$ and (E) reads as $\frac{(1-\beta)\alpha}{1-\alpha}=\frac{1-\beta}{(1-\alpha)\beta}$. This is equivalent to $\alpha\beta=1$, contradicting $0 < \alpha < \beta < 1$.

\emph{Case I.c. } We get $R=Q\left(\frac{(1-\beta)^2\alpha}{(1-\alpha)^2\beta}(\alpha,\beta)\right)$ and (E) gives $\frac{(1-\beta)^2\alpha}{(1-\alpha)^2\beta}=\frac{1-\beta}{(1-\alpha)\beta}$. Multiplication with $\frac{(1-\alpha)^2\beta}{1-\beta}$ and subtraction of the right-hand side yields $(1-\beta)\alpha-(1-\alpha)=-\left((1-\alpha)^2+\alpha(\beta-\alpha)\right)=0$, again contradicting $0 < \alpha < \beta < 1$.

\emph{Case II.a. } We have $R=Q(\alpha\beta(\alpha,\beta))$ as in Case~I.a.

\emph{Case II.b. } Here $R=Q\left(\frac{(1-\beta^2)\alpha}{(1-\alpha^2)\beta}(\alpha,\beta)\right)$ and (E) is $\frac{(1-\beta^2)\alpha}{(1-\alpha^2)\beta}=\frac{1-\beta}{(1-\alpha)\beta}$. Multiplying with $\frac{(1-\alpha^2)\beta}{1-\beta}$ and subtracting $\alpha$ we obtain $\alpha\beta=1$, contradicting $0<\alpha<\beta<1$.

\emph{Case II.c. } We get $R=Q\left(\frac{(1-\beta)\alpha}{1-\alpha}(\alpha,\beta)\right)$ as in Case~I.b.

\emph{Case II.d. } Here $R=Q\left(\frac{((1-\alpha)-(1-\beta)\beta)\alpha}{(1-\alpha)\beta-(1-\beta)\alpha^2}(\alpha,\beta)\right)$ and (E) is $\frac{((1-\alpha)-(1-\beta)\beta)\alpha}{(1-\alpha)\beta-(1-\beta)\alpha^2}=\frac{1-\beta}{(1-\alpha)\beta}$. Note that the left-hand denominator is positive, because $1-\alpha>1-\beta>0$ and $\beta>\alpha^2>0$. By multiplying with the denominators and subtracting the right-hand side, (E) is equivalent to
\begin{equation}\label{eq:pfn=3}
\left(\alpha-\alpha^2\right)\beta^3+\left(1-2\alpha+2\alpha^2\right)\beta^2+\left(-1+2\alpha-4\alpha^2+\alpha^3\right)\beta+\alpha^2=0.
\end{equation}
This describes family (IV) from Theorem~\ref{thm:n=3}. To see that, for every $\alpha \in (0,1)$, equation \eqref{eq:pfn=3} has a unique solution $\beta=\beta(\alpha) \in (\alpha,1)$, we denote the left-hand side of \eqref{eq:pfn=3} by $h_\alpha(\beta)$. There is a solution, since $h_\alpha(\alpha)=-\alpha(1-\alpha)^4< 0 < \alpha(1-\alpha)^2=h_\alpha(1)$. The solution is unique, because $h_\alpha$ is convex on the interval $(\alpha,1)$ by
$h_\alpha''(\beta)=6\alpha(1-\alpha)\beta+2\left((1-\alpha)^2+\alpha^2)\right)>0$.

\emph{Case II.e. } We have $R=Q\left(\frac{(1-\beta)^2\alpha}{(1-\alpha)^2\beta}(\alpha,\beta)\right)$ as in Case~I.c.

\emph{Case II.f. } Here $R=Q\left(\frac{(2-\alpha-\beta)\alpha\beta}{(1-\alpha)\beta+(1-\beta)\alpha}(\alpha,\beta)\right)$ and (E) reads as $\frac{(2-\alpha-\beta)\alpha\beta}{(1-\alpha)\beta+(1-\beta)\alpha}=\frac{1-\beta}{(1-\alpha)\beta}$. By subtracting the right-hand side and multiplying with the denominators and the positive term $\frac{1}{(1-\alpha)(1-\alpha\beta)}$, we arrive at the equivalent formulation 
$q_\alpha(\beta):=\beta^2-\frac{1-3\alpha+\alpha^2}{1-\alpha}\beta-\frac{\alpha}{1-\alpha}=0$.
For every $\alpha \in (0,1)$, the quadratic function $q_\alpha(\beta)$ has a unique root $\beta \in (\alpha,1)$, because $q_\alpha(\alpha)=-2\alpha(1-\alpha)<0<\alpha=q_\alpha(1)$. Noting that the smaller root $\beta_1=\frac{1-3\alpha+\alpha^2 -\sqrt{(1-3\alpha+\alpha^2)^2+4\alpha(1-\alpha)}}{2(1-\alpha)}$ is negative, the above mentioned relevant root is $\beta_2=\frac{1-3\alpha+\alpha^2 +\sqrt{(1-3\alpha+\alpha^2)^2+4\alpha(1-\alpha)}}{2(1-\alpha)}$. This way we get family (III) from Theorem~\ref{thm:n=3}.
\end{proof}

Simple numerical computations show that all families in Theorem~\ref{thm:n=3} are relevant: not all quadrangles represented by one of the families (II), (III) and (IV) are contained in the other families. 


\section{An observation on self-affine convex quadrangles\label{sec:self-affine}}

\begin{proof}[Proof of Theorem~\ref{thm:self-affine}]
By Theorem~\ref{thm:main}(iv), it remains to consider the case $n=5$ and the cases with even $n \ge 6$. Moreover, the situation for trapezoids is trivial, so that we restrict our consideration to quadrangles $Q(\alpha,\beta)$, $0 < \alpha < \beta < 1$.

\emph{The case $n=5$. } This dissection can be found in \cite[Proposition 1]{hertel_richter2010} and goes back to Attila P\'or: Let $Q(\alpha,\beta)$ be represented by a quadrangle $Q=abcd$ with $a=(0,0)$ as in Figure~\ref{fig:parameter1}, and let $\varrho$ be a contraction with centre $a$ and ratio $\alpha\beta$. Then $Q$ splits into the contracted quadrangle $\varrho(Q)$ and two trapezoids $T_1=\varrho(b)bc\varrho(c)$ and $T_2=\varrho(c)cd\varrho(d)$. $T_1$ and $T_2$ are of affine type $T(\alpha\beta)$ and can be gc-dissected into two copies of $Q(\alpha,\beta)$ by Lemma~\ref{lem:trapezoids}(i). This way $Q(\alpha,\beta)$ is dissected into five affine copies of itself.

\emph{The case of even $n \ge 6$. }
Let the quadrangle $Q^*=abcd$ represent $Q(\alpha,\beta)$ as in Figure~\ref{fig:parameter1}. Now let $\varphi$ be the affine map defined by $\varphi(a)=c$, $\varphi(b)=b$ and $\varphi(d)=d$. Note that $\varphi(c)$ is between $a$ and $c$ and, in particular, $\varphi(Q^*) \subseteq Q^*$; see the left-hand part of Figure~\ref{fig:self-affine}. 
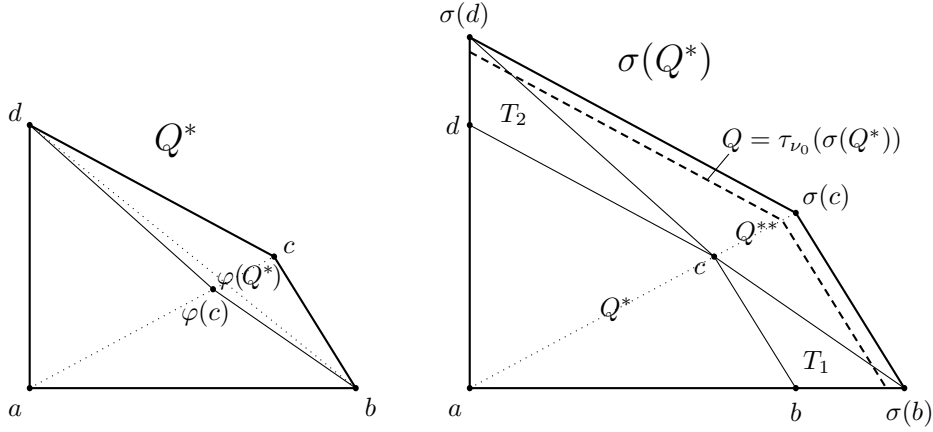
\begin{figure}
\begin{center}
\begin{tikzpicture}[xscale=6.5, yscale=7]

\draw[thick]
  (0,0)--(.889,0)--(.667,.333)--(0,.667)--cycle
	(-.9,0)--(-.233,0)--(-.4,.25)--(-.9,.5)--cycle
	;

\draw
	(0,.5)--(.5,.25)--(.667,0)
	(0,.667)--(.5,.25)--(.889,0)
	(-.9,.5)--(-.525,.188)--(-.233,0)
	(.485,.395)--(.53,.445)
  ;

\draw[thick,densely dashed]
  (0,.639)--(.639,.319)--(.852,0)
	;

\draw[dotted]
  (0,0)--(.667,.333)
	(-.9,0)--(-.4,.25)
	(-.233,0)--(-.9,.5)
	;

\fill
  (-.9,0) circle (.006)
	(-.93,-.07) node[above] {$a$}
  (-.233,0) circle (.006)
	(-.203,-.07) node[above] {$b$}
  (-.4,.25) circle (.006)
	(-.37,.24) node[above] {$c$}
  (-.9,.5) circle (.006)
	(-.93,.49) node[above] {$d$}
  (-.525,.188) circle (.006)
	(-.54,.1) node[above] {$\varphi(c)$}
	(-.45,.21) node {$\varphi(\hspace{-1pt}Q^*\hspace{-1pt})$}
	(-.6,.47) node {\LARGE $Q^*$}
	(.3,.15) node {$Q^*$}
	(.588,.294) node {$Q^{**}$}
	(.71,.05) node {$T_1$}
	(.09,.52) node {$T_2$}
  (0,0) circle (.006)
	(-.03,-.07) node[above] {$a$}
  (.667,0) circle (.006)
	(.667,-.08) node[above] {$b$}
  (.5,.25) circle (.006)
	(.47,.2) node[above] {$c$}
  (0,.5) circle (.006)
	(-.035,.5) node {$d$}
	  (.889,0) circle (.006)
	(.9,-.09) node[above] {$\sigma(b)$}
  (.667,.333) circle (.006)
	(.73,.32) node[above] {$\sigma(c)$}
  (0,.667) circle (.006)
	(-.01,.665) node[above] {$\sigma(d)$}
	(.7,.47) node {$Q=\tau_{\nu_0}(\sigma(Q^*))$}
(.4,.62) node {\LARGE $\sigma(Q^*)$}
	;

\end{tikzpicture}
\end{center}
\caption{$n$-self-affinity for even $n \ge 6$.\label{fig:self-affine}}
\end{figure}
Next let $\sigma$ be a homothety with centre $a$ such that $\sigma(\varphi(c))=c$; see the right-hand part of Figure~\ref{fig:self-affine}. Then $\sigma(Q^*)$ is dissected into the quadrangles $Q^*$ and $Q^{**}=\sigma(\varphi(Q^*))$ and two triangles $T_1=b\sigma(b)c$ and $T_2=c\sigma(d)d$. Finally, let $\tau_\nu$ denote a homothety with centre $a$ and ratio $\nu$ with $\frac{|ac|}{|a\sigma(c)|}<\nu<1$. Then the quadrangle $\tau_\nu(\sigma(Q^*))$ (dashed in Figure~\ref{fig:self-affine}) is of affine type $Q(\alpha,\beta)$ and dissected into the quadrangles $Q^*$ and $Q^{**}\cap\tau_\nu(\sigma(Q^*))$ of type $Q(\alpha,\beta)$ and into two trapezoids $T_1 \cap\tau_\nu(\sigma(Q^*))$ and $T_2 \cap\tau_\nu(\sigma(Q^*))$ of the same type $T(\mu(\nu))$. Here the parameter $\mu(\nu)$ depends continuously on $\nu$ with $\lim_{\nu \downarrow \frac{|ac|}{|a\sigma(c)|}} \mu(\nu)=1$ and $\lim_{\nu \uparrow 1}\mu(\nu)=0$. So we find $\nu_0$ such that $\mu(\nu_0)=\alpha\beta$.
Consequently, the quadrangle $Q=\tau_{\nu_0}(\sigma(Q^*))$ of type $Q(\alpha,\beta)$ splits into two quadrangles of type $Q(\alpha,\beta)$ and two trapezoids of type $T(\alpha\beta)$. By Lemma~\ref{lem:trapezoids}, one of the trapezoids can be dissected into two quadrangles of type $Q(\alpha,\beta)$ and the other one can be dissected into $n-4$ quadrangles of type $Q(\alpha,\beta)$. This proves the $n$-self-affinity of $Q$.
 \end{proof}



\end{document}